\newcommand{\lyxmathsym}[1]{\ifmmode\begingroup\def\b@ld{bold}
	\text{\ifx\math@version\b@ld\bfseries\fi#1}\endgroup\else#1\fi}
\numberwithin{equation}{section}
\numberwithin{figure}{section}
\theoremstyle{plain}
\newtheorem{thm}{\protect\theoremname}[section]
\theoremstyle{plain}
\newtheorem{cor}[thm]{\protect\corollaryname}
\theoremstyle{definition}
\newtheorem{defn}[thm]{\protect\definitionname}
\theoremstyle{plain}
\newtheorem{prop}[thm]{\protect\propositionname}
\theoremstyle{remark}
\newtheorem{rem}[thm]{\protect\remarkname}
\theoremstyle{plain}
\newtheorem{lem}[thm]{\protect\lemmaname}
\providecommand{\definitionname}{Definition}
\providecommand{\lemmaname}{Lemma}
\providecommand{\propositionname}{Proposition}
\providecommand{\corollaryname}{Corollary}
\providecommand{\remarkname}{Remark}
\providecommand{\theoremname}{Theorem}
\def\bR {\mathbb{R}}
\def\cD {\mathcal{D}}
\def\cY {\mathcal{Y}}
\def\grad {{\nabla}}
\def\la {\langle}
\def\ra {\rangle}
\newcommand{\bs}[1]{\boldsymbol{#1}}
\renewcommand{\ker}{\operatorname{ker}}
\newcommand{\eee}{\mathrm e}
\begin{document}
	\title[Multi-solitons for Klein-Gordon equations]{Asymptotic stability and classification of multi-solitons for Klein-Gordon
		equations}
	\author{Gong Chen}
	
	\author{Jacek Jendrej}
	\date{\today}
	\begin{abstract}
		Focusing on multi-solitons for the Klein-Gordon equations, in first part of this paper, 
		we establish their conditional asymptotic stability.  In the second part of this paper,  we classify  pure
		multi-solitons which are solutions converging to  multi-solitons in the energy space as $t\rightarrow\infty$. Using Strichartz estimates developed in our earlier work \cite{CJ2} and the modulation
		techniques, we show that if a solution stays close to the multi-soliton family, then it scatters to the multi-soliton family in the sense that the solution will converge in large time
		to a superposition of Lorentz-transformed solitons (with slightly modified velocities),
		and a radiation term which is at main order a free wave.  Moreover, we construct a finite-codimension centre-stable manifold around the well-separated multi-soliton family. 	Finally, given different Lorentz parameters
		and arbitrary centers, we show that all the  corresponding pure multi-solitons 
		form a finite-dimension manifold.
	\end{abstract}
	\address[Chen]{School of Mathematics, Georgia Institute of Technology, Atlanta, GA 30332-0160, USA }
	\email{gc@math.gatech.edu}
	\address[Jendrej]{CNRS \& LAGA, Universit\'e Sorbonne Paris Nord, UMR 7539, 99 av J.-B.~Cl\'ement, 93430 Villetaneuse, France }
	\email{jendrej@math.univ-paris13.fr}
	\date{\today}
	
	\maketitle
	\setcounter{tocdepth}{1}
	
	\tableofcontents
	
	\section{Introduction}
	Consider the nonlinear Klein-Gordon equation 
	\begin{equation}
		\partial_{t}^{2}\psi-\Delta \psi+\psi-\psi^{p}=0,\,\left(t,x\right)\in\mathbb{R}^{1+d}.\label{eq:nkg}
	\end{equation}
	Eliminating the time dependence, one can find stationary solutions  to the equation above which solve
	\begin{equation}
		-\Delta Q+Q-Q^{p}=0\label{eq:nQ}
	\end{equation}
	and decay exponentially.  In particular, there exists a unique radial positive ground
	state with the least energy
	\[
	E(Q):=\int_{\mathbb{R}^d}\frac{|\nabla Q|^2 +Q^2}{2}-\frac{Q^{p+1}}{p+1}\,dx
	\] among all non-zero solutions to the elliptic problem \eqref{eq:nQ}. We refer to Nakanishi-Schlag \cite{NSch,NSch1,NSch3} for more details.

	
	The nonlinear Klein-Gordon equation \eqref{eq:nkg} is a wave type equation, so one indispensable
	tool to study it is the Lorentz boost. Let $\beta\in\mathbb{R}^{d}$, $|\beta|<1$, be a velocity vector.
	For a function $\phi:\mathbb{R}^{d}\to\mathbb{R}^{d}$, the Lorentz
	boost of $\phi$ with respect to $\beta$ is given by
	\begin{equation}\label{eq:lorentz}
		\phi_{\beta}(x):=\phi(\Lambda_{\beta}x),\quad\Lambda_{\beta}x:=x+(\gamma-1)\frac{(\beta\cdot x)\beta}{|\beta|^{2}},\quad\gamma:=\frac{1}{\sqrt{1-|\beta|^{2}}}.
	\end{equation}
	With these notations, the Lorentz transformation is given by
	\[
	(t',x')=\big(\gamma(t-\beta\cdot x),\ \Lambda_{\beta}x-\gamma\beta t\big)=\big(\gamma(t-\beta\cdot x),\ \Lambda_{\beta}(x-\beta t)\big).
	\]
	It is crucial that for each $\beta\in\mathbb{R}^{d},\,\left|\beta\right|\in[0,1)$, if
	$u$ is a solution of \eqref{eq:nkg} then $u_{\beta}\left(x-\beta t\right)$
	is also a solution.
	

	
	Applying Lorentz transforms and the translational symmetry, we
	can obtain a family of traveling waves $Q_{\beta}\left(x-\beta t+x_{0}\right)$
	to \eqref{eq:nkg} from the stationary solution $Q$.  Using these traveling waves as building blocks, for $\beta_j\neq\beta_k$ and arbitrary $x_j$'s, one can study 
	the
	multi-soliton which refers to a superposition of a finite number
	of Lorentz-transformed solitons, moving with distinct speeds:
	\begin{equation}
		R\left(t,x\right)=\sum_{j=1}^{N}\sigma_j Q_{\beta_{j}}\left(x-\beta_{j}t-y_j\right),\,\sigma_j\in\{\pm 1\}\label{eq:multiintro}.
	\end{equation}
	Notice that a multi-soliton above is not a solution to \eqref{eq:nkg} but	
	one can construct a
	pure multi-soliton to \eqref{eq:nkg} in the following sense:
	\begin{equation}
		\psi\rightarrow	R\left(t,x\right)\,\,\text{as}\,\,t\rightarrow\infty,\label{eq:asyMutiintro}
	\end{equation}
	see C\^ote-Mu\~noz \cite{CMu}. Similar pure multi-solitons can be constructed using other type of stationary solutions to \eqref{eq:nQ}, see Bellazzini-Ghimenti-Le Coz \cite{BGL} and C\^ote-Martel \cite{CMart}.
	To understand the long-time dynamics and soliton resolution of \eqref{eq:nkg},
	two crucial problems are to study  dynamics around multi-solitons
	and to give a classification of pure multi-solitons, i.e.,
	solutions satisfying \eqref{eq:asyMutiintro}.

	In this paper, using Strichartz estimates and exponential dichotomy in our earlier works, \cite{CJ,CJ2},  we  are going to analyze  asymptotic stability of multi-solitons \eqref{eq:multiintro}, construct centre-stable manifolds around multi-solitons and classify  pure multi-solitons \eqref{eq:asyMutiintro}.   For concreteness, we will focus on $d=3$ and $p=3$ in \eqref{eq:nkg}.  In particular, the same analysis is expected to work for $d\geq 3$ and for equations with mass supercritical and energy subcritical powers provided that the linearized operator around the ground state has no gap eigenvalue nor threshold resonance.

	

	\subsection{Related literature}
	
	The study of the (conditional) stability problem of solitons in nonlinear disperisve PDEs has a long history. For example,  Weinstein \cite{Wein,Wein2}, Pego-Weinstein \cite{PW} etc established orbital stability: starting with a soliton plus a small perturbation, the solution remains in this form for all time. Their work introduced the method of modulation and constituted the foundation of every subsequent attempt. We will be interested in  the asymptotic stability problem which is a stronger property-the situation in which small perturbations not only remain small, but in fact disperse. A first such result
	was established by Soffer-Weinstein \cite{SW2,SW3}. Without trying to be exhaustive, we refer to Beceanu \cite{Bec2}, Cuccagna \cite{Cu1}, Krieger-Schalg \cite{KS,KS2}, Nakanishi-Schlag \cite{NSch,NSch1,NSch2,NSch3}, Perelman \cite{Perl}, Rodnianski-Schlag-Soffer \cite{RSS2}, Schlag\cite{Sch1} etc. We also refer Tao's survey \cite{Tao2} for references. Overall, the stability problem near one single soliton has been studied extensively. For the stability problem around a multi-soliton, Perelman \cite{Perl}, Rodnianski-Schlag-Soffer \cite{RSS2} use pointwise decay estimates to obtain the asymptotic stability in $L^2\bigcap L^1$. Using appropriate Strichartz estimates, one can go beyond these strong topologies and analyze the stability problem in some natural topology, see for example Beceanu \cite{Bec2}, Nakanishi-Schlag \cite{NSch,NSch1,NSch2,NSch3} for one-soliton problem. On the other hand, for the energy and modulation methods to study asymptotic stability of KdV type problems, see Merle-Martel \cite{MM1}, Martel-Merle-Tsai \cite{MMT} in the soliton region. These papers use specific monotonicity formulae of the KdV type equations
	and do not apply to wave equations.  In this paper, with exponential dichotomies and Strichartz estimates for Klein-Gordon equations with several moving potentials in our earlier work \cite{CJ,CJ2},  together with modulation methods,  we will show the conditional asymptotic stability of the multi-solitons consisted of  Lorentz-transformed solitons $Q$, see \eqref{eq:multiintro}, moving with distinct speeds. 
	we will show if a solution stays close the the multi-soliton family in the energy space, then it will converge in large time
	to a superposition of Lorentz-transformed solitons (with slightly modified velocities),
	and a radiation term which is at main order a free wave.  Using modulation techqniues and Strichartz estimates with their refined applications, we also construct the centre-stable manifold around the well-separated multi-soliton family. We further show that all solutions stay close the multi-soltion family satisfying the separation condition if and only if they are in this centre-stable manifold.

	In the second part of the current paper, we study the classification of pure multi-soliton. A pure multi-soliton is a special perturbation of the multi-soliton, for which the energy of the radiation term
	converges to zero as $t \to \infty$. The problem of classification of pure multi-solitons solution is to find all such solutions. To construct pure multi-solitons in various different models has a long history, we refer to  Bellazzini-Ghimenti-Le Coz \cite{BGL}, C\^ote-Mu\~noz \cite{CMu}, C\^ote-Martel \cite{CMart}, Martel \cite{Mart} and references therein for more details. After establishing the existence of pure multi-solitons, it is natural to ask if one can classify all pure multi-solitons and show their uniqueness in appropriate sense.
	In the case of the generalized Korteweg-de Vries equation,
	the problem of existence and uniqueness of pure multi-solitons
	was solved by Martel \cite{Mart} in the subcritical and critical cases and by Combet \cite{Com} in supercrical settings. Again in this paper, specific monotonicity formulae are key tools.  In the works by C\^ote-Friederich \cite{CoteF} and Friederich \cite{Fri}, assuming certain algebraic decay rates in time, they are able to prove uniqueness of multi-solitons in those classes for various models.  In our recent work, Chen-Jendrej \cite{CJ3}, instead of the weak convergence argument, we used a fixed point argument, which naturally results in the uniqueness,  to construct pure multi-kink (soliton) solutions for $1+1$ scalar field models but we have to restrict ourselves onto the class of exponential multi-kink solution, i.e., solutions converging to multi-kink exponentially. We would also like to mention the work by Le Coz-Li-Tsai \cite{LLT} where the fixed point argument is also applied but in a different manner  for fast spreading
	(NLS) multi-solitons.   Formally, one should expect the exponential decay rates for any pure multi-solitons since solitons in these cases decay exponentially,  the assumption of different speeds will give us the exponential decay of the interaction of different solions in time. In this paper, we confirm this intuition. With the assumption of convergence \emph{without} any decay rate in the energy space, we show that any pure multi-solitons for \eqref{eq:nkg}  converges to the corresponding multi-solitons exponentially. This also illustrate the intuition that pure multi-soltion solution are quite special. They might have some properties of solutions to elliptic equations, i.e., the existence of solutions can be found by energy arguments but in fact these solutions have better behavior like smoothness and exponential decay. It is also possible to see that pure multi-solitons also have spacial exponential decay but this is not our goal in this paper. We refer to our computations in \cite{CJ2} for some details. Our classification of pure multi-soliton solutions shows that under the assumption of distinct velocities, they should form a finite-dimension manifold.
	
	Although in this paper, we focus on the cubic  nonlinear Klein-Gordon equation \eqref{eq:nkg}  in $\mathbb{R}^{1+3}$,  we believe that the ideas and techniques developed in our works will be useful in the study of multi-solitons in  other dispersive models.
	
	\subsection{Basic settings and main results}
	

	
	It is convenient to rewrite the equation \eqref{eq:nkg}  as a dynamical system in the energy space $$\mathcal{H}:=H^1(\mathbb{R}^3)\times L^2 (\mathbb{R}^3)$$ using its Hamiltonian  formalism. 
	Denoting\[\boldsymbol{\psi}\left(t\right)=\left(\begin{array}{c}
		\psi\\
		\psi_{t}
	\end{array}\right)\] we can write
	\begin{equation}
		\partial_{t}\boldsymbol{\psi}\left(t\right)=JH_0\boldsymbol{\psi}\left(t\right)+\hm{F}(\hm{\psi}),\label{eq:dynkg}
	\end{equation}
	where $\hm{F}\text{(\ensuremath{\hm{\psi}})=\ensuremath{\left(\begin{array}{c}
				0\\
				\psi^{3}
			\end{array}\right)}}$ 
	and			
	\begin{equation}\label{eq:J}
		J:=\left(\begin{array}{cc}
			0 & 1\\
			-1 & 0
		\end{array}\right),\,\ \ H_0:=\left(\begin{array}{cc}
			-\Delta+1 & 0\\
			0 & 1
		\end{array}\right).
	\end{equation}
	In this paper, we are interested in the multi-soliton structure consisting of the Lorentz transformed  ground
	states. Recall that a ground state refers to the unique least energy stationary state $Q$ which solves
	\begin{equation}\label{eq:Qintro}
		-\Delta Q+Q-Q^{3}=0.   
	\end{equation}
	For $\beta\in\mathbb{R}^3$ such that $|\beta|<1$, we consider the corresponding Lorentz boots of the ground state $Q_\beta$, see \eqref{eq:lorentz}.  Taking the vector version of the Lorentz boost of the ground state, we have  
	\begin{equation}\label{eq:vecQ}
		\hm{Q}_{\beta}:=(Q_{\beta},-\beta\cdot\nabla Q_{\beta}).
	\end{equation}	
	Using notations above, the traveling wave in the Hamiltonian formalism
	is given by $\hm{Q}_{\beta}(x-\beta t-y)$. 
	
	Given a fixed natural number
	$N$,  \emph{distinct} Lorentz parameters
	\begin{equation}\label{eq:beta}
		\hm{\beta}=(\beta_{1},\ldots\beta_{N})\in\mathbb{R}^{3N},
	\end{equation} a set of shifts 
	\begin{equation}\label{eq:shift}
		\hm{y}=(y_{1},\ldots,y_{N})\in\mathbb{R}^{3N}  
	\end{equation} and a set of signs
	\begin{equation}\label{eq:signs}
		\hm{\sigma}=(\sigma_{1},\ldots\sigma_{N}),\,\sigma_{j}\in\{\pm1\},
	\end{equation}
	we consider the multi-soliton structure
	given by
	\begin{equation}\label{eq:sumQ}
		\hm{Q}\text{(\ensuremath{\hm{\beta}},\ensuremath{\hm{y}})=\ensuremath{\sum_{j=1}^N\sigma_{j}\hm{Q}_{\beta_{j}}(\cdot-y_{j})}.}
	\end{equation}
	We define the multi-soliton family consisting of $N$ solitons as the collection of multi-solitons
	\begin{equation}\label{eq:mutifamil}
		\mathscr{F}_{\hm{Q}}=\left\{ 	\hm{Q}(\hm{\beta},\hm{y})\large|\,\hm{\beta}\in\mathbb{R}^{3N},\hm{y}\in\mathbb{R}^{3N}\right\}.
	\end{equation}

	\subsection{Main results}

	\label{ssec:results}
	If the trajectories of the solitons approach each other,
	these solitons interact and their behaviour can be complicated which lead to the collision problem.
	Such a study is not our goal in this paper. Throughout this paper, 
	we will always assume that the trajectories of the solitons
	satisfy a \emph{separation condition} which we introduce below.
	\begin{defn}
		\label{def:sep-1}
		We say that the functions $y_j: [0, \infty) \to \bR^3,\,j=1,\ldots,N$
		satisfy the \emph{separation condition} with parameters
		$\rho, \delta > 0$ if
		\begin{equation}
			|y_j(t) - y_k(t)| \geq \delta t + \rho,\qquad\text{for all }t \geq 0\text{ and }j \neq k.
		\end{equation}
	\end{defn}
	\begin{defn}\label{def:sep}
		We say that the vectors $y_{1}^{in}, \ldots, y_{N}^{in}, \beta_{1}^{in}, \ldots, \beta_{N}^{in} \in \bR^3$
		satisfy the \emph{separation condition} with parameters
		$\rho, \delta > 0$ if the affine functions $y_j(t) = y_{j}^{in} + \beta_{j}^{in} t$ satisfy the separation condition with the same parameters. 
		
	\end{defn}
	
	\begin{defn}\label{def:sepfamily} Given $\delta>0$ and $\rho>0$, next we define the well-separated multi-soliton family as
		\begin{equation}\label{eq:wellfamily}
			\mathfrak{S}_{\delta,\rho}:=\left\{ 	\hm{Q}(\hm{\beta},\hm{y})\large|\,\hm{\beta}\in\mathbb{R}^{3N},\hm{y}\in\mathbb{R}^{3N},\,\left|y_j+\beta_j t-(y_k+\beta_kt)\right|\geq\delta t+\rho, t\geq 0, j\neq k \right\}. 
		\end{equation}
	\end{defn}	
	First of all, we have the following asymptotic stability of the multi-solitons staying close to the multi-soliton family. 
	
	\begin{thm}\label{thm:orbitasy}
		For every $\delta > 0$ there exist $\rho, \eta > 0$ such that
		the following holds. Let the initial parameters  $(y_j^{in}, \beta_j^{in})_{j=1}^J$
		satisfy the separation condition in the sense of Definition \ref{def:sep}, and let
		\begin{equation}
			\|\bs \psi_0 - \bs Q( \hm{\beta}^{in}, \hm{y}^{in})\|_{\mathcal{H}} \leq \eta.
		\end{equation}
		If the corresponding solution $\bs \psi$ stays in  a neighborhood of the multi-soliton family \footnote{Note that in particular all pure multi-soliton solutions constructed in \cite{CMu} satisfy this condition. Moreover Corollary \ref{cor:orbitalmanifoldintro} below will give a full characterization of the set of solutions satisfying this condition.}:
		\begin{equation}
			\sup_{t\in\mathbb{R}^{+}}\inf_{\hm{\beta}\in\mathbb{R}^{3N},\hm{y}\in\mathbb{R}^{3N}}\left\Vert \boldsymbol{\psi}(t)-\boldsymbol{Q}(\hm{\beta},\hm{y})\right\Vert _{\mathcal{H}}\lesssim\eta,\label{eq:orbcondintro}
		\end{equation}
		then $\bs \psi$ scatters to the multi-soliton family:
		there exist $\beta_{j}\in\mathbb{R}^{3}$,
		paths $y_{j}(t)\in\mathbb{R}^{3}$ and $\boldsymbol{\psi}_{+}\in \mathcal{H}$ with
		the properties that $\dot{y}_{j}(t)\rightarrow\beta_{j}$ and
		\[
		\lim_{t\rightarrow\infty}\text{\ensuremath{\left\Vert \boldsymbol{\psi}(t)-\boldsymbol{Q}(\hm{\beta},\hm{y}(t))-e^{JH_{0}t}\boldsymbol{\psi}_{+}\right\Vert _{\mathcal{H}}}}=0.
		\]
	\end{thm}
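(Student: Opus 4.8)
The plan is to set up a modulated decomposition of the solution, derive ODEs for the modulation parameters together with a dispersive equation for the remainder, and then close the argument using the Strichartz estimates and exponential dichotomy from \cite{CJ,CJ2}. First I would invoke a standard modulation lemma: under the smallness hypothesis \eqref{eq:orbcondintro}, for each $t$ there is a (locally unique, $C^1$ in $t$) choice of parameters $(\hm\beta(t),\hm y(t))$ with $\dot y_j(t)$ close to $\beta_j(t)$, such that the remainder $\bs g(t) := \bs\psi(t) - \bs Q(\hm\beta(t),\hm y(t))$ is small in $\cH$ and satisfies $N\cdot(2d)$ orthogonality conditions — one set for each soliton, obtained by pairing $\bs g$ against the generalized kernel elements (coming from translation and Lorentz-boost invariance) of the linearized operator around $\sigma_j \hm Q_{\beta_j}$, suitably transported to the moving frame. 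Because the solitons are well-separated (by $\delta t + \rho$) and $Q$ decays exponentially, the cross-interaction terms between distinct solitons are $O(e^{-c(\delta t+\rho)})$, hence integrable in time; this is what makes the modulation system effectively decoupled at leading order.

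Next I would write the equation for $\bs g$. Plugging the decomposition into \eqref{eq:dynkg} gives $\partial_t \bs g = J H_0 \bs g + \sum_j \bs V_j(t) \bs g + \bs N(\bs g) + \bs{\mathrm{Mod}}(t) + \bs{\mathrm{Int}}(t)$, where $\bs V_j(t)$ is the (moving, Lorentz-boosted) potential generated by the $j$-th soliton, $\bs N(\bs g)$ collects the genuinely nonlinear (at least quadratic) terms in $\bs g$, $\bs{\mathrm{Mod}}$ is the modulation error (linear in $(\dot{\hm\beta} - 0, \dot{\hm y} - \hm\beta)$ against $\partial Q$-type profiles), and $\bs{\mathrm{Int}}$ is the soliton-soliton interaction, exponentially small by separation. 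The key analytic input is that the operator $JH_0 + \sum_j \bs V_j(t)$ — a Klein–Gordon operator with $N$ moving, Lorentz-transformed potentials — satisfies inhomogeneous Strichartz estimates on the subspace cut out by the orthogonality conditions, with an exponential dichotomy splitting off the finite-dimensional unstable/neutral directions; this is exactly the content of our earlier work \cite{CJ,CJ2}. Projecting $\bs g$ onto the stable/dispersive part $\bs g^s$ and the complementary finite-dimensional part, I get: (i) a Strichartz bound for $\bs g^s$ in terms of $\|\bs g^s(0)\|_{\cH}$ plus the $L^1_t \cH + $ dual-Strichartz norms of the source terms; (ii) coupled ODE inequalities controlling the modulation parameters and the finite-dimensional part of $\bs g$, driven by quadratic-in-$\bs g$ and exponentially small interaction terms.

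Then I would run a bootstrap / continuity argument on the time interval $[0,\infty)$. The orthogonality conditions let me estimate $|\dot{\hm\beta}| + |\dot{\hm y} - \hm\beta| \lesssim \|\bs g(t)\|_{\cH}^2 + e^{-c(\delta t + \rho)}$, which in particular forces $\dot y_j(t) \to \beta_j(\infty) =: \beta_j$ and shows $\hm\beta(t)$ converges; this produces the limiting velocities $\beta_j$ in the statement. The hypothesis \eqref{eq:orbcondintro} feeds back as an a priori bound $\sup_t \|\bs g(t)\|_{\cH} \lesssim \eta$, which — crucially — is what lets us ignore (or rather, control via a finite-time argument plus the centre-stable machinery) the potentially growing unstable component: the assumption that $\bs\psi$ \emph{stays} close rules out exponential growth, so the finite-dimensional unstable part must in fact remain bounded, hence decay along the dichotomy. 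With $\|\bs g\|_{\cH}$ bounded and the source terms in $\bs{\mathrm{Int}}$ exponentially decaying, the Strichartz estimate upgrades $\bs g^s$ to a global space-time bound; combined with the local decay / channel-of-energy type estimates, the nonlinear and potential-localized contributions to $\bs g$ are asymptotically absorbed into the soliton part, and the remaining piece solves the free Klein–Gordon equation up to an $L^1_t L^2_x$ error, hence converges (by completeness and the existence of wave operators for the free flow) to $e^{JH_0 t}\bs\psi_+$ for some $\bs\psi_+ \in \cH$. Unwinding the decomposition gives precisely $\|\bs\psi(t) - \bs Q(\hm\beta,\hm y(t)) - e^{JH_0 t}\bs\psi_+\|_{\cH} \to 0$.

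The main obstacle, and the step that requires the most care, is handling the finite-dimensional neutral/unstable directions of the linearized flow: the linearization around a single ground-state soliton has a negative eigenvalue (the equation is not an integrable or gradient flow with a monotonicity formula), so $JH_0 + \sum_j \bs V_j$ genuinely has an unstable subspace of dimension $N$. The orthogonality conditions remove the zero modes but not these; the resolution is to exploit hypothesis \eqref{eq:orbcondintro} — staying globally close is incompatible with any nontrivial exponentially growing component — which pins the unstable coordinates and makes the dichotomy usable. Technically this means the Strichartz + dichotomy estimates of \cite{CJ,CJ2} must be applied to the \emph{time-dependent, multi-bump} operator (not a single autonomous one), and one must verify that the separation condition guarantees the errors from freezing/transporting the potentials are summable; this bookkeeping, rather than any single hard inequality, is where the real work lies.
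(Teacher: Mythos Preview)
Your proposal is correct and follows essentially the same approach as the paper: modulation via the implicit function theorem to impose orthogonality to the generalized kernel, the equation for the remainder $\hm u$ with the moving-potential linear part plus interaction/modulation/nonlinear source terms, Strichartz estimates from \cite{CJ2} combined with a bootstrap, and the Yang--Feldman construction of $\bs\psi_+$. The one point worth sharpening is your treatment of the unstable modes: rather than saying they ``decay along the dichotomy,'' the precise mechanism (as in the paper) is that boundedness of $\hm u$ forces the unique choice of $a_j^+(0)$ for which $a_j^+(t)$ is given by a \emph{backward-in-time} Duhamel integral against the exponentially decaying kernel $\exp\big(\int_s^t \nu/\gamma_j\,d\tau\big)$, which then yields $L^1_t\cap L^\infty_t$ control via Young's inequality.
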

	Next we show that there is a refined structure in the neighborhood of the well-separated multi-soliton family: there is a finite-codimension smooth central-stable manifold in the small neighborhood of the well-separated multi-soliton family.

	\begin{thm}\label{thm:globalmaniintro}
		Fixed a natural number $N$, given $\delta>0$, there exists  $\rho>0$ large, such that there exists a codimension $N$ Lipschitz centre-stable manifold $\mathcal{N}$ around the well-separated multi-soliton family $\mathfrak{S}_{\delta,\rho}$ which is invariant for $t\geq0$ such that for any choice of initial data $\hm{\psi}(0)\in\mathcal{N}$,
		the solution $\hm{\psi}(t)$  to \eqref{eq:dynkg} with initial data $\hm{\psi}(0)$
		exists globally, and it scatters to the multi-soliton family: there
		exist $\beta_{j}\in\mathbb{R}^{3}$, paths $y_{j}(t)\in\mathbb{R}^{3}$
		and $\boldsymbol{\psi}_{+}\in\mathcal{H}$ with the property that $\dot{y}_{j}(t)\rightarrow\beta_{j}$
		and
		\[
		\lim_{t\rightarrow\infty}\text{\ensuremath{\left\Vert \boldsymbol{\psi}(t)-\boldsymbol{Q}(\hm{\beta},\hm{y}(t))-e^{JH_{0}t}\boldsymbol{\psi}_{+}\right\Vert _{\mathcal{H}}}}=0.
		\]
	\end{thm}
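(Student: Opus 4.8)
The plan is to combine the modulation method with a Lyapunov--Perron type fixed-point argument run in the Strichartz spaces adapted to several moving potentials, relying on the exponential dichotomy proved in \cite{CJ} and the Strichartz estimates of \cite{CJ2}. Recall the spectral picture of the operator obtained by linearizing \eqref{eq:dynkg} at a single Lorentz-transformed ground state: apart from the generalized kernel generated by the translation and Lorentz symmetries, it has exactly one pair of real eigenvalues $\pm\nu$, $\nu>0$, with no other gap spectrum and no threshold resonance. Hence, for a configuration of $N$ well-separated solitons, the linearized flow around $\bs Q(\bs\beta,\bs y)$ splits, up to exponentially small coupling between the solitons, into $N$ unstable directions, $N$ stable directions, and a central part on which the evolution is purely dispersive. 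The centre-stable manifold $\mathcal N$ will be produced as the graph of a Lipschitz map sending the stable-plus-central data to the $N$ unstable coordinates, which is why $\codim\mathcal N=N$.

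First I would fix the geometric decomposition. For $\bs\psi$ in a small tube around $\mathfrak S_{\delta,\rho}$, the implicit function theorem gives $C^1$ modulation parameters $(\bs\beta(t),\bs y(t))$ and a remainder $\bs g(t)=\bs\psi(t)-\bs Q(\bs\beta(t),\bs y(t))$ subject to the orthogonality conditions that kill the generalized kernels of the one-soliton linearized operators. Differentiating yields the modulation equations, which control $|\dot{\bs\beta}|+|\dot{\bs y}-\bs\beta|$ by $\|\bs g\|_{\mathcal H}^2$ plus a soliton-interaction term, together with the equation for the remainder,
\[
\partial_t\bs g=JH_0\bs g+\sum_{j=1}^N V_j(t)\bs g+\bs N(\bs g)+\mathcal E_{\mathrm{mod}},
\]
where $V_j$ are the moving linearized potentials, $\bs N$ gathers the genuinely nonlinear terms, and $\mathcal E_{\mathrm{mod}}$ the modulation and interaction errors. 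By the separation condition, $\mathcal E_{\mathrm{mod}}(t)$ carries a factor $e^{-c\delta t}$; this is precisely what makes the interaction between distinct solitons negligible once $\rho$ is chosen large.

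Next I would run the fixed point. Using the dichotomy of \cite{CJ} along the (nearly affine) trajectories, project $\bs g$ onto stable, unstable, and central subspaces, write the unstable coordinates $a_j^+(t)=\la\bs g(t),\bs Z_j^+\ra$, and impose the only choice compatible with global boundedness, namely $a_j^+(t)=-\int_t^\infty e^{\nu_j(t-s)}\big(\text{forcing}\big)_j(s)\,\vd s$, while representing the stable and central parts of $\bs g$ by the forward Duhamel formula and estimating them in the Strichartz norms of \cite{CJ2}. Inserting this system --- the $\bs g$-equation, the modulation ODEs, and the ``Duhamel from $+\infty$'' identities --- into a contraction mapping on the space of functions on $[0,\infty)$ with small Strichartz norm yields, for each admissible choice of the stable-plus-central part of $\bs g(0)$ and of $(\bs\beta(0),\bs y(0))$, a unique global solution remaining in the tube. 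The induced map from the stable-plus-central data to $\big(a_1^+(0),\dots,a_N^+(0)\big)=\big(-\int_0^\infty e^{-\nu_j s}(\text{forcing})_j(s)\,\vd s\big)_{j=1}^N$ is Lipschitz by the contraction estimates with parameters, and its graph is $\mathcal N$. Forward invariance follows from the uniqueness of the fixed point under time translation (the trajectories stay well separated on $[t_1,\infty)$), and the scattering statement is obtained either by invoking Theorem \ref{thm:orbitasy} on a trajectory through $\mathcal N$ or by reading off the free asymptotic state directly from the Strichartz bound on the central part.

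The main obstacle, I expect, is that the central block is not exponentially contracting, so the Lyapunov--Perron scheme cannot be closed in an exponentially weighted space and must be run in the Strichartz spaces built for $N$ moving potentials in \cite{CJ2}; reconciling the slowly drifting modulation parameters with this functional framework, and absorbing every cross term between the $N$ solitons using only the $e^{-c\delta t}$ gain from the separation condition with $\rho$ large, is where the real work lies. The finite-dimensional unstable ODE part is routine once the dichotomy and the Strichartz control of the remaining directions are available.
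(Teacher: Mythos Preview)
Your outline matches the paper's strategy closely: fix modulation parameters by the implicit function theorem, decompose the remainder along the dichotomy of \cite{CJ}, impose the backward Duhamel condition on the $N$ unstable coordinates, and close a fixed point using the multi-potential Strichartz estimates of \cite{CJ2}. The paper also builds $\mathcal N$ in two stages---first a local graph over the codimension-$7N$ subspace at a fixed base point, then recovering the $6N$ modulation directions and gluing charts over $\mathfrak S_{\delta,\rho}$---but this is organizational rather than a different idea.

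There is one technical point you only gesture at. You propose to run the contraction ``on the space of functions on $[0,\infty)$ with small Strichartz norm,'' but this will not close as stated: two iterates carry different modulation paths $(\bs\beta_1,\bs y_1)$ and $(\bs\beta_2,\bs y_2)$, and even though $\|\bs\beta_i'\|_{L^1\cap L^\infty}+\|\bs y_i'-\bs\beta_i\|_{L^1\cap L^\infty}$ is small, the differences $|\bs y_1(t)-\bs y_2(t)|$ and $|\bs\beta_1(t)-\bs\beta_2(t)|$ can grow in $t$, so the difference of the moving potentials is not small in $\mathcal S^*_{\mathcal H}$. The paper resolves this by a two-norm argument: \emph{a priori} estimates are carried out in the strong space $\mathcal A_\eta$ (unweighted Strichartz plus $L^1_t\cap L^\infty_t$ on the modulation derivatives), while the contraction is proved in a strictly weaker topology
\[
\|f\|_G:=\sup_{t\ge 0}e^{-\kappa t}|f(t)|,\qquad \|\bs v\|_{G\mathcal S_{\mathcal H}}:=\sup_{t\ge 0}e^{-\kappa t}\|\bs v\|_{\mathcal S_{\mathcal H}[0,t]},
\]
with $0<\kappa\ll\nu$. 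The exponential slack $e^{-\kappa t}$ converts the bound $|\delta\bs y(t)|\lesssim\int_0^t|\delta\bs y'|$ into $\|\delta\bs y\|_G\lesssim\kappa^{-1}\|\delta\bs y'\|_G$, which is then absorbed by the smallness of $\eta$ and $e^{-\rho/2}$; and the constraint $\kappa<\nu$ keeps the backward unstable integral $\int_t^\infty e^{(s-t)\kappa}e^{-\nu(s-t)/\gamma_j}\,ds$ bounded. Your closing paragraph correctly identifies that reconciling the drifting parameters with the Strichartz framework is ``where the real work lies''---this weighted-norm device is precisely that work, and without it the difference estimates do not close.
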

	One can further conclude that all orbitally stable solutions are in $\mathcal{N}$ and its converse is also true. 
	\begin{cor}\label{cor:orbitalmanifoldintro}
		For every $\delta > 0$ there exist $\rho, \eta > 0$ such that
		the following holds. Let the initial parameters $(y_j^{in}, \beta_j^{in})_{j=1}^J$
		satisfy the separation condition in the sense of Definition \ref{def:sep}, and let
		\begin{equation}
			\|\bs \psi_0 - \bs Q( \hm{\beta}^{in}, \hm{y}^{in})\|_{\mathcal{H}} \leq \eta.
		\end{equation}
		If the solution $\bs \psi$ to \eqref{eq:dynkg} with initial data $\hm{\psi}_0$  stays  in  a neighborhood of multi-soliton family:
		\begin{equation}\label{eq:orbitalcondintor}
			\sup_{t\in\mathbb{R}^{+}}\inf_{\hm{\beta}\in\mathbb{R}^{3N},\hm{y}\in\mathbb{R}^{3N}}\left\Vert \boldsymbol{\psi}(t)-\boldsymbol{Q}(\hm{\beta},\hm{y})\right\Vert _{\mathcal{H}}\lesssim\eta,
		\end{equation}
		then $\bs \psi(t)\in\mathcal{N}$. On the other hand,  any solution $\hm{\psi}(t)$ with initial data $\hm{\psi}(0)\in\mathcal{N}$ satisfies the orbital stability condition \eqref{eq:orbitalcondintor}.
	\end{cor}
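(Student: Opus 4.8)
The plan is to establish the two implications separately. The reverse one---that any solution with $\bs\psi(0)\in\mathcal N$ satisfies the orbital stability bound \eqref{eq:orbitalcondintor}---is essentially a restatement of Theorem~\ref{thm:globalmaniintro}: since $\mathcal N$ is constructed inside an $O(\eta)$-neighborhood of $\mathfrak S_{\delta,\rho}$ and is forward invariant, a solution issued from $\mathcal N$ stays within $O(\eta)$ of $\mathfrak S_{\delta,\rho}\subset\mathscr F_{\bs Q}$ for all $t\ge0$, so taking the infimum over $\bs\beta,\bs y$ in \eqref{eq:sumQ} gives \eqref{eq:orbitalcondintor}; in fact the scattering conclusion of Theorem~\ref{thm:globalmaniintro} yields the stronger $\|\bs\psi(t)-\bs Q(\bs\beta,\bs y(t))\|_{\mathcal H}\to0$. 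So the content of the corollary is the forward implication, and the guiding principle for it is the classical characterization of a centre-stable manifold as the set of data whose forward orbit does not escape along the unstable directions; the work is to place an orbitally stable solution into the framework in which that characterization was established.

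I would proceed as follows. First, invoke the modulation step from the proof of Theorem~\ref{thm:orbitasy}: using \eqref{eq:orbitalcondintor} and the separation of the initial parameters in the sense of Definition~\ref{def:sep}, write for all $t\ge0$
\[
\bs\psi(t)=\bs Q(\bs\beta(t),\bs y(t))+\bs g(t),\qquad \|\bs g(t)\|_{\mathcal H}\lesssim\eta,
\]
with $C^1$ parameters $\bs\beta(t),\bs y(t)$ chosen so that $\bs g(t)$ satisfies the usual orthogonality conditions and, crucially, with the modulated centers still obeying a separation condition $|y_j(t)-y_k(t)|\ge\delta' t+\rho'$ with only slightly degraded constants. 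This last point is a bootstrap: the solitons move with nearly distinct velocities $\dot y_j(t)\approx\beta_j^{in}$ and $\bs g$ is small, so the separation of the affine initial trajectories is propagated. This is the step that genuinely uses the qualitative hypothesis \eqref{eq:orbitalcondintor}, and the one to be most careful about, because \eqref{eq:orbitalcondintor} controls only the distance to the unrestricted family $\mathscr F_{\bs Q}$, not to the well-separated family $\mathfrak S_{\delta,\rho}$.

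Once this is done, $\bs\psi$ is an orbitally stable solution remaining in the small neighborhood of $\mathfrak S_{\delta',\rho'}$ in which $\mathcal N$ is produced by Theorem~\ref{thm:globalmaniintro}. Inserting the decomposition into \eqref{eq:dynkg}, $\bs g$ solves a linear Klein--Gordon system with $N$ moving, exponentially localized potentials plus source terms quadratic in $\bs g$ and linear in $\dot{\bs\beta}$ and $\dot y_j-\beta_j$; the exponential dichotomy of \cite{CJ,CJ2} then splits the evolution into a centre-stable part obeying a Strichartz/energy bound with at most subexponential growth and an unstable part spanned by $N$ coordinates $a_j(t)$, one per soliton, with $\dot a_j=\nu_j a_j+\mathcal E_j$, where $\nu_j>0$ is the unstable eigenvalue of the linearization about $Q_{\beta_j}$ and $\mathcal E_j=O(\eta\|\bs g\|)+(\text{modulation terms})$. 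By the construction of $\mathcal N$, its defining graph condition on the unstable coordinates is equivalent to $\sup_{t\ge0}\|\bs g(t)\|_{\mathcal H}\lesssim\eta$: if the unstable part were off the graph by some $c_0>0$, integrating the scalar ODEs and using the smallness of $\mathcal E_j$ would force $|a(t)|\gtrsim e^{\nu_{\min}t}c_0$ until the orbit leaves the neighborhood, contradicting \eqref{eq:orbitalcondintor}. Hence $\bs\psi(0)$ lies on the graph, i.e. $\bs\psi(0)\in\mathcal N$, and by forward invariance $\bs\psi(t)\in\mathcal N$ for all $t\ge0$. Besides the separation bootstrap, the main obstacle I anticipate is the bookkeeping between the time-dependent soliton frame of the modulation and the slowly varying spectral frame of the operators $JH_0+V_j(t)$: one must verify that the unstable coordinates are well defined, decouple at leading order, and that all cross terms and modulation terms are strictly lower order, so that the exponential-growth alternative is genuinely clean---but this is precisely the machinery already developed in \cite{CJ,CJ2} and used in the proof of Theorem~\ref{thm:globalmaniintro}, so the corollary mostly repackages it.
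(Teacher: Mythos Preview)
Your strategy is sound and the reverse implication is handled correctly. For the forward implication, your approach and the paper's are closely related but not identical.

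You argue by contrapositive: if $\bs\psi(0)$ were off the graph by $c_0>0$ in the unstable coordinates, the scalar ODEs $\dot a_j=\nu_j a_j+\mathcal E_j$ would force exponential growth of $|a|$, contradicting \eqref{eq:orbitalcondintor}. The paper instead argues directly via uniqueness of fixed points: after performing the same modulation you describe, it observes that the orbitally stable solution $(\bs\beta,\bs y,\bs u)$ automatically satisfies the \emph{stabilization condition} for $a_j^+$ (the backward-in-time integral representation, forced by $\sup_t\|\bs u(t)\|_{\mathcal H}\lesssim\eta$), hence solves the very same fixed-point system \eqref{eq:manieq1}--\eqref{eq:manieq3} used to construct $\mathcal N$ in Theorem~\ref{thm:manifold1}. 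It then writes $\bs\psi(0)=\bs Q(\bs\beta(0),\bs y(0))+\bs R_0+\tilde{\bs\Phi}\cdot\bs{\mathcal Y}^+$, compares with the manifold solution $\tilde{\bs\psi}$ having the same $\bs R_0$ but graph value $\bs\Phi(\bs R_0)$, and invokes the contraction in the weak topology $\mathcal A_{G,\eta}$ to conclude the two coincide, so $\tilde{\bs\Phi}=\bs\Phi(\bs R_0)$ and $\bs\psi(0)\in\mathcal N$.

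The practical difference is that your ODE-growth argument needs you to control the coupling: $\mathcal E_j$ depends on the full $\bs u$ (including the centre part and the modulation terms), not just on $a_j^+$, so ``integrating the scalar ODE'' is not self-contained. To make it rigorous you would end up comparing $\bs\psi$ with the manifold solution sharing its centre-stable data and running a difference estimate---which is exactly the paper's contraction step. The paper's route sidesteps this by recognising from the outset that the orbitally stable solution already sits in the function space where the contraction was performed, so uniqueness applies immediately. Your framing is the classical dynamical-systems picture and is correct in spirit; the paper's is more economical here because the contraction machinery is already in place from Theorem~\ref{thm:manifold1}.
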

	Next two theorems are concerned with pure multi-solitons. We first introduce a more general setting of pure multi-solitons.
	\begin{defn}\label{def:puremulti}
		We say that a solution $\hm{\psi}$ to the equation \eqref{eq:dynkg} is a pure multi-soliton if there exist  $\hm{\beta}\in\mathbb{R}^{3N}$
		satisfying $|\beta_{j}|<1$,  $\beta_{j}\neq\beta_{k}$ for $j\neq k$,  and  $\hm{y}_{p}(t)\in \mathbb{R}^{3N}$ satisfying $|y_{p,j}(t)-y_{p,k}(t)|\geq L\gg 1$ for  all $t\geq 0$ such that
		\begin{equation}
			\lim_{t\rightarrow\infty}\text{\ensuremath{\left\Vert \boldsymbol{\psi}(t)-\boldsymbol{Q}(\hm{\beta},\hm{y}_{p}(t))\right\Vert _{\mathcal{H}}}}=0\label{eq:puremultidef}
		\end{equation}
	\end{defn}
	
	First of all, we have that every pure multi-soliton actually converges to a multi-soliton with an exponential decay rate.
	\begin{thm}\label{thm:pureexp}
		Suppose $\hm{\psi}(t)$ is a pure multi-soliton in the sense of Definition \ref{def:puremulti}. Then actually there exists $\hm{x}_0\in\mathbb{R}^{3N}$ such that one has
		\begin{equation}
			\text{\text{\ensuremath{\left\Vert \boldsymbol{\psi}(t)-\boldsymbol{Q}(\hm{\beta},\hm{\beta}t+\hm{x}_{0})\right\Vert _{\mathcal{H}}}}\ensuremath{\lesssim e^{-\rho_0 t}}}
		\end{equation}
		for small $\rho_0>0$ which is independent of $\hm{\psi}$. 
	\end{thm}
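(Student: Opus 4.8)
The plan is to run a modulated bootstrap on a half-line $[T_0,\infty)$, set up so that every source term driving the dispersive part of the error is either quadratic in the error or exponentially small (coming from the interaction of well-separated solitons), and then to upgrade this to exponential decay using the spectral gap of the linearised flow. First I would note that since $\bs\psi$ is a pure multi-soliton, for $T_0$ large and $t\ge T_0$ one has $\|\bs\psi(t)-\bs Q(\bs\beta,\bs y_p(t))\|_{\mathcal H}\le\eta_0$ with $\eta_0$ as small as needed, and that, the velocities $\beta_j$ being distinct and $\bs y_p$ essentially affine, the trajectories satisfy the separation condition of Definition~\ref{def:sep-1} on $[T_0,\infty)$ with some $\delta>0$. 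I would then introduce the standard modulated decomposition $\bs\psi(t)=\bs Q(\bs\beta(t),\bs y(t))+\bs g(t)$ with $\bs g(t)$ orthogonal to the generalised null space (the translation and Lorentz zero modes) of the linearisation at each soliton, obtaining via the implicit function theorem that $\bs\beta(t)$ converges, that $\bs y(t)-\bs\beta t$ stays bounded, and that $\|\bs g(t)\|_{\mathcal H}\to0$.

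The error solves $\partial_t\bs g=(JH_0+\mathcal V(t))\bs g+\mathcal N(t,\bs g)+\mathcal E_{\mathrm{mod}}(t)+\mathcal E_{\mathrm{int}}(t)$, where $\mathcal V(t)=\sum_jV_j(t)$ is the sum of the exponentially localised moving potentials obtained by linearising $\psi^3$ at each $\sigma_jQ_{\beta_j}(\cdot-y_j(t))$, $\mathcal N$ is the superquadratic remainder (schematically $Q\,\bs g^2+\bs g^3$), $\mathcal E_{\mathrm{int}}$ is the interaction defect with $\|\mathcal E_{\mathrm{int}}(t)\|_{\mathcal H}\lesssim e^{-c\delta t}$ by exponential decay of $Q$ and the separation condition, and $\mathcal E_{\mathrm{mod}}$ collects the modulation terms. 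The modulation equations, together with a local-smoothing bound for $\bs g$ near each soliton, give $|\dot{\bs\beta}(t)|+|\dot{\bs y}(t)-\bs\beta(t)|\lesssim\|\bs g(t)\|_{\mathcal H}^2+e^{-c\delta t}$ after integration, so that, writing $M(T):=\sup_{t\ge T}\|\bs g(t)\|_{\mathcal H}\to0$, all sources are controlled by $M(t)^2+e^{-c\delta t}$ in the relevant $L^1_t\mathcal H([t,\infty))$-type norms. Using the exponential dichotomy and retarded Strichartz estimates for $\partial_t-JH_0-\mathcal V(t)$ from \cite{CJ,CJ2}, I would split $\bs g=\bs g_u+\bs g_s$, where $\bs g_u=\sum_j(a_j^+Y_j^+(t)+a_j^-Y_j^-(t))$ lies in the finite-dimensional span of the $\pm\nu_j$ eigenmodes and $\bs g_s$ in the complementary dispersive subspace, on which the propagator is uniformly bounded in both time directions. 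Since $\|\bs g_s(S)\|_{\mathcal H}\to0$, the stable part is represented by a Duhamel integral from $t=+\infty$, giving $\|\bs g_s(t)\|_{\mathcal H}\lesssim M(t)^2+e^{-c\delta t}$ after a continuity argument bounding the Strichartz norm of $\bs g$ on $[t,\infty)$ by $M(t)+e^{-c\delta t}$. The unstable coefficients obey $\dot a_j^+=\nu_ja_j^++O(M(t)^2+e^{-c\delta t})$, so integrating from $+\infty$ using $a_j^+(t)\to0$ yields $|a_j^+(t)|\lesssim M(t)^2+e^{-c\delta t}$; the stable coefficients obey $\dot a_j^-=-\nu_ja_j^-+O(M(t)^2+e^{-c\delta t})$, and integrating forward from $T$ yields $|a_j^-(t)|\lesssim e^{-\nu_j(t-T)}M(T)+M(T)^2+e^{-c\delta T}$ for $t\ge T$.

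Collecting the three pieces, for $t\ge T\ge T_0$ and $\nu:=\min_j\nu_j$,
\[
\|\bs g(t)\|_{\mathcal H}\;\lesssim\;e^{-\nu(t-T)}M(T)+M(T)^2+e^{-c\delta T}.
\]
Taking the supremum over $t\ge T+\tau$ gives $M(T+\tau)\le C\big(e^{-\nu\tau}M(T)+M(T)^2+e^{-c\delta T}\big)$; after fixing $\tau$ with $Ce^{-\nu\tau}\le\tfrac14$ and $T_0$ with $CM(T_0)\le\tfrac14$, this becomes the step-$\tau$ recursion $M(T+\tau)\le\tfrac12 M(T)+Ce^{-c\delta T}$ for $T\ge T_0$, which iterates to $M(T)\lesssim e^{-\rho_0T}$ with $\rho_0:=\min\{\tfrac{\ln2}{2\tau},\tfrac{c\delta}{2}\}>0$, a constant fixed by the spectral gap and the separation rate and not by the particular solution. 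Feeding $\|\bs g(t)\|_{\mathcal H}\lesssim e^{-\rho_0t}$ back into the modulation equations makes $|\dot{\bs\beta}(t)|+|\dot{\bs y}(t)-\bs\beta(t)|\lesssim e^{-2\rho_0t}+e^{-c\delta t}$ integrable, so $\bs\beta(t)\to\bs\beta$ and $\bs y(t)-\bs\beta t\to\bs x_0$ with exponential rate; combining with Lipschitz dependence of $\bs Q$ on $(\bs\beta,\bs y)$ on compacts and the bound on $\bs g$ gives $\|\bs\psi(t)-\bs Q(\bs\beta,\bs\beta t+\bs x_0)\|_{\mathcal H}\lesssim e^{-\rho_0 t}$.

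The main obstacle will be the dispersive estimate in the second step: justifying the Duhamel representation of $\bs g_s$ from $+\infty$ and closing the retarded Strichartz estimate so that the bound on $\bs g_s$ depends on $\bs g$ only superquadratically — in particular controlling the non-localised cubic term $\bs g^3$ and reducing the modulation source $\mathcal E_{\mathrm{mod}}$ to an $L^1_t\mathcal H$ quantity of size $M(t)^2$ via local smoothing. This is precisely where the multi-potential exponential-dichotomy and Strichartz machinery of \cite{CJ,CJ2} is essential. (As a consistency check, applying Theorem~\ref{thm:orbitasy} to $\bs\psi$ produces a scattering profile $\bs\psi_+$ which must vanish, since $\|e^{JH_0t}\bs\psi_+\|_{\mathcal H}$ is constant while $\bs g(t)\to0$; the route above avoids invoking this directly.)
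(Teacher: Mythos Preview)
Your proposal is correct and follows essentially the same route as the paper: modulate to impose orthogonality to the generalised null space, write the error equation with the linearised operator $JH(t)$, split off the $a_j^\pm$ components, integrate the unstable coefficients from $+\infty$ (stabilisation), integrate the stable coefficients forward, and feed the exponentially small soliton interaction $\mathcal I(Q)$ plus superlinear terms through the Strichartz machinery of \cite{CJ2}; then read off exponential convergence of $(\bs\beta(t),\bs y(t))$ from the modulation equations. The paper compresses the closing step into the single assertion $\|\bs u\|_{S_{\mathcal H}[t,\infty)}\lesssim\|\mathcal I(Q)\|_{S_{\mathcal H}^*[t,\infty)}\lesssim e^{-\delta t/2}$ ``by Strichartz estimates where we absorbed higher order terms'', whereas you make this explicit via the step-$\tau$ recursion $M(T+\tau)\le\tfrac12 M(T)+Ce^{-c\delta T}$; your version has the virtue of displaying transparently how the linear contribution $e^{-\nu(t-T)}M(T)$ from the stable modes $a_j^-$ is beaten down (a point the paper's one-line absorption leaves implicit), but the underlying mechanism and the tools invoked are identical.
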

	Finally, regarding the classification of pure multi-solitons, we have the following theorem:	
	\begin{thm}\label{thm:pureclas}
		For fixed $\hm{\beta}=\text{(\ensuremath{\beta_{1},\ldots,\beta_{N})} \ensuremath{\in\mathbb{R}^{3N}}}$
		satisfying $|\beta_{j}|<1$ and $\beta_{j}\neq\beta_{k}$ for $j\neq k$
		and $\hm{x}_{0}=(x_{1},\ldots,x_{N})\in\mathbb{R}^{3N}$, the set
		of solution $\hm{\psi}$ to \eqref{eq:dynkg} satisfying
		\begin{equation}
			\lim_{t\rightarrow\infty}\text{\ensuremath{\left\Vert \boldsymbol{\psi}(t)-\boldsymbol{Q}(\hm{\beta},\hm{\beta}t+\hm{x}_{0})\right\Vert _{\mathcal{H}}}}=0
		\end{equation} is a dimension
		$N$ Lipschitz manifold.
	\end{thm}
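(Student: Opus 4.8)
The plan is to exhibit the set in question as the image of a ball of $\bR^N$ under a bi-Lipschitz map, the $N$ free parameters being the amplitudes of the \emph{stable} (exponentially contracting) modes of the linearized flow around the $N$ Lorentz-boosted solitons. For each $j$ the linearization around $Q_{\beta_j}$ in its moving frame has a single simple unstable eigenvalue $k_j>0$ and a single simple stable eigenvalue $-k_j$ (both coming from the unique negative eigenvalue of $-\Delta+1-3Q^2$), a $6$-dimensional generalized kernel generated by the three translations and three Lorentz boosts, and the rest of its spectrum on $i\bR$; thus the multi-soliton linearization carries $8N$ distinguished directions, which will split as $6N$ ``modulation'', $N$ ``stable'' and $N$ ``unstable''.

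First I would reduce to an analysis near $t=+\infty$. Let $\hm\psi$ be a pure multi-soliton converging to $\hm Q(\hm\beta,\hm\beta t+\hm x_0)$; by Theorem~\ref{thm:pureexp} we already have $\|\hm\psi(t)-\hm Q(\hm\beta,\hm\beta t+\hm x_0)\|_{\cH}\lesssim e^{-\rho_0 t}$. Running the modulation procedure used in the proofs of Theorems~\ref{thm:orbitasy} and~\ref{thm:globalmaniintro}, one writes for large $t$
\[
\hm\psi(t)=\hm Q(\hm\beta(t),\hm y(t))+\hm g(t),
\]
with $\hm g(t)$ obeying the $6N$ orthogonality conditions that kill the generalized kernel; this produces $C^1$ paths with $\hm\beta(t)\to\hm\beta$, $\hm y(t)-\hm\beta t\to\hm x_0$, and decomposes the transverse part of $\hm g$ along the unstable, stable and dispersive bundles furnished by the exponential dichotomy of \cite{CJ,CJ2} as $\hm g(t)=\sum_j a_j^+(t)\hm Y_j^+(t)+\sum_j a_j^-(t)\hm Y_j^-(t)+\hm g_d(t)$. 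The modulation equations together with Duhamel's formula then give $\dot a_j^{\pm}=\pm k_j a_j^{\pm}+R_j^{\pm}$ and an integral identity for $\hm g_d$ controlled by the Strichartz estimates of \cite{CJ2}, where the remainders $R_j^{\pm}$, the forcing of $\hm g_d$ and $|\dot{\hm\beta}|+|\dot{\hm y}-\hm\beta|$ are all bounded by $\|\hm g\|_{\cH}^2$ plus the exponentially small soliton interaction.

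Next I would count which data are free. Integrating $\dot a_j^+=k_j a_j^+ + R_j^+$ and using $a_j^+(t)\to 0$ forces $a_j^+(t_0)=\int_{t_0}^{\infty}e^{-k_j(s-t_0)}R_j^+(s)\,ds$, so the $N$ unstable amplitudes are slaved to the rest of the solution --- this is the codimension-$N$ constraint. The stable amplitudes satisfy $a_j^-(t)=e^{-k_j(t-t_0)}a_j^-(t_0)+\int_{t_0}^t e^{-k_j(t-s)}R_j^-(s)\,ds$, which tends to $0$ for \emph{every} choice of $\hm a^-(t_0)=(a_1^-(t_0),\dots,a_N^-(t_0))\in\bR^N$, so these $N$ real numbers are unconstrained; and the requirement $\hm\beta(t)\to\hm\beta$, $\hm y(t)-\hm\beta t\to\hm x_0$, inserted into the integrated modulation equations, fixes the remaining $6N$ parameters. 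Hence every pure multi-soliton of the prescribed type is uniquely determined by $\hm a^-(t_0)\in\bR^N$. Conversely, for $T$ large and $\hm a\in\bR^N$ small I would solve the above system on $[T,\infty)$ by a contraction in the space normed by $\sup_{t\ge T}e^{\rho_1 t}\big(\|\hm g(t)\|_{\cH}+|\hm\beta(t)-\hm\beta|+|\hm y(t)-\hm\beta t-\hm x_0|\big)$ with $0<\rho_1<\min_j k_j$, imposing $\hm a^-(T)=\hm a$, recovering $a_j^+$ from the slaving formula, and reconstructing $\hm g_d,\hm\beta,\hm y$ by Duhamel integrated from $+\infty$; the Strichartz estimates of \cite{CJ2}, the exponential dichotomy of \cite{CJ}, and the exponential smallness of the interaction error make this a uniform contraction, just as in \cite{CJ3} and in the proof of Theorem~\ref{thm:globalmaniintro}. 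This yields, for each small $\hm a$, a unique pure multi-soliton $\hm\Psi_{\hm a}$ depending on $\hm a$ in a Lipschitz manner; since any pure multi-soliton of the prescribed type equals $\hm\Psi_{\hm a}$ for $\hm a=\hm a^-(T)$, with $\hm a^-(T)$ itself Lipschitz in the solution and distinct $\hm a$ giving distinct solutions, the set coincides with $\{\hm\Psi_{\hm a}:\hm a\in\bR^N,\ |\hm a|<\epsilon\}$ and is a bi-Lipschitz image of a ball of $\bR^N$, hence an $N$-dimensional Lipschitz manifold.

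The hard part will be this last fixed-point step: organizing the $N$-dimensional shooting in the unstable directions (the Lyapunov--Perron slaving of the $a_j^+$) while simultaneously closing \emph{exponentially weighted} dispersive estimates for $\hm g_d$ against a forcing that is exponentially small in time but whose spatial support runs off to infinity --- which is exactly what the multi-potential Strichartz estimates and exponential dichotomy of \cite{CJ,CJ2} are designed to handle. By comparison the reduction and the dimension count amount to bookkeeping of the $8N$ distinguished directions already isolated in the proofs of Theorems~\ref{thm:orbitasy}--\ref{thm:globalmaniintro}, together with Theorem~\ref{thm:pureexp}, which guarantees that \emph{every} pure multi-soliton lives in the exponentially-weighted space where the contraction operates.
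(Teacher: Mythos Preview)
Your plan is correct and lands on the same parametrization as the paper --- the $N$ stable-mode amplitudes $(a_j^-)_{j=1}^N$ are the free data, the $N$ unstable amplitudes are slaved via the Lyapunov--Perron integral, and Theorem~\ref{thm:pureexp} is what guarantees every pure multi-soliton lives in the exponentially weighted space where the contraction runs. The route, however, differs in one structural respect. You modulate, carrying time-dependent parameters $(\hm\beta(t),\hm y(t))$ as additional unknowns pinned by their values at $+\infty$; the paper instead works directly around the \emph{fixed} trajectory, writing $\hm v(t)=\hm\psi(t)-\hm Q(\hm\beta,\hm\beta t+\hm x_0)$ with no modulation at all. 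The generalized-kernel components of $\hm v$ are then nonzero, but their ODEs are nilpotent and the paper closes them by integrating from $+\infty$, which works because the forcing is placed in a space $GS^*_{\varrho+\varpi}$ with strictly faster decay than the solution space $GS_\varrho$. This buys a simpler fixed point with $6N$ fewer unknowns and, more importantly, avoids comparing potentials centered at two different modulated paths $\hm y_1(t),\hm y_2(t)$ --- the issue that forced the \emph{growing} weight $e^{-\kappa t}$ in the centre-stable construction. In your scheme that comparison is still tractable (both paths converge to the same limit, so their difference decays in the weighted norm), but you should flag that this is why the decaying weight closes here when it did not in Theorem~\ref{thm:globalmaniintro}. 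Finally, your pointwise weighted energy norm is adequate for the cubic power via Sobolev embedding (the paper makes exactly this remark), whereas the paper runs the argument in weighted Strichartz norms so that it extends to other subcritical powers.
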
	
	
	\subsection{Outline of the paper}
	This paper is organized as follows: In Section \ref{sec:prelim}, we recall basic linear theory including spectral theory, Strichartz estimates. We will also include the modulation computations for the sake of completeness.  In Section \ref{sec:manifold}, we  show that orbital stability of multi-solitons implies their asymptotic stability. Finally, in Section \ref{sec:cla}, we analyze pure multi-solitons and classify them.
	\subsection{Notations}
	
	\textquotedblleft $A:=B\lyxmathsym{\textquotedblright}$ or $\lyxmathsym{\textquotedblleft}B=:A\lyxmathsym{\textquotedblright}$
	is the definition of $A$ by means of the expression $B$. We use
	the notation $\langle x\rangle=\left(1+|x|^{2}\right)^{\frac{1}{2}}$.
	The bracket $\left\langle \cdot,\cdot\right\rangle $ denotes the
	distributional pairing and the scalar product in the spaces $L^{2}$,
	$L^{2}\times L^{2}$ . For positive quantities $a$ and $b$, we write
	$a\lesssim b$ for $a\leq Cb$ where $C$ is some prescribed constant.
	Also $a\simeq b$ for $a\lesssim b$ and $b\lesssim a$. We denote
	$B_{R}(x)$ the open ball of centered at $x$ with radius $R$ in
	$\mathbb{R}^{3}$. We also denote by $\chi$ a standard $C^{\infty}$
	cut-off function, that is $\chi(x)=1$ for $\left|x\right|\leq1$,
	$\chi(x)=0$ for $\left|x\right|>2$ and $0\leq\chi(x)\leq1$ for
	$1\leq\left|x\right|\leq2$ .

	We always use the bold font to denote a pair of functions as a vector function.  For example, $\hm{u}=\left(\begin{array}{c}
		u_1\\
		u_2
	\end{array}\right)$  and $\hm{f}=\left(\begin{array}{c}
		f_1\\
		f_2
	\end{array}\right)$.   We will use the energy space  $\mathcal{H}:=H^{1}\times L^{2}$.
	For a general element in $\boldsymbol{v}\in \mathcal{H}$, $v_{1}$
	denotes the first row and $v_{2}$ denotes the second
	row of $\boldsymbol{v}$ respectively. For any space $X$ which measures a scalar function $f$ in the norm $\Vert f\Vert _X$, we use the notation $X_\mathcal{H}$ to measure the corresponding vector function $\hm{f}=\left(\begin{array}{c}
		f_1\\
		f_2
	\end{array}\right)$ by the norm
	\begin{equation}\label{eq:X_H}
		\Vert\hm{f}\Vert_{X_\mathcal{H}}=\Vert (\sqrt{-\Delta+1})f_1\Vert _X+\Vert f_2\Vert _X.
	\end{equation}
	
	\subsection{Acknowledgement} We would like to thank Marius Beceanu and Chongchun Zeng for enlightening discussions.
	\section{Preliminaries}\label{sec:prelim}
	In this section, we collect several basic facts and fundamental tools we will use in our nonlinear analysis.
	\subsection{Spectral theory}\label{subsec:spectral}

	Recall that, up to translation in space and sign change, the equation \eqref{eq:nkg}
	has a unique least energy stationary state $Q$ which  is called the  ground state and solves
	\begin{equation}\label{eq:Q}
		-\Delta Q+Q-Q^{3}=0.   
	\end{equation}
	Moreover, we also recall that this solution $Q$ is linear unstable in the sense that the linearized operator near $Q$
	\begin{equation}\label{eq:Lop}
		L:=-\Delta+1+V=-\Delta+1-3Q^2
	\end{equation}has a negative eigenvalue $-\nu^2$ with the associated eigenfunction $\phi$:
	\begin{equation}\label{eq:negaeigen}
		L\phi=-\Delta\phi+\phi-3Q^2\phi=-\nu^2\phi.
	\end{equation}
	Due the translational invariance of the equation, one also has
	\begin{equation}\label{eq:zeromodes}
		L\phi^0_m=-\Delta\phi^0_m+\phi^0_m-3Q^2\phi^0_m=0
	\end{equation}where $\phi^0_m=\partial_{x_m}Q,\,m=1,2,3.$  The continuous spectrum of $L$ is $[1,\infty)$. Importantly, $L$ has no gap eigenvalues in $(0,1]$ nor threshold resonance at $1$, see for example the numerical verification in  Demanet-Schlag \cite{DS}. We also refer to the book by Nakanishi-Schlag \cite{NSch1} and reference therein for details.
	
	Fixed a $\beta=(\beta^{1},\beta^{2},\beta^{3})$ with $\left|\beta\right|<1$, using the Hamiltonian formalism, the equation for $\eqref{eq:Q}$ and notations from \eqref{eq:J}, the vector  $\hm{Q}_{\beta}$, see \eqref{eq:vecQ}, satisfies
	\begin{equation}\label{eq:vecQeq}
		JH_{0}\hm{Q}_{\beta}+\beta\cdot\nabla\hm{Q}_{\beta}+\hm{F}(\hm{Q}_{\beta})=0
	\end{equation}
	where $\hm{F}\text{(\ensuremath{\hm{v}})=\ensuremath{\left(\begin{array}{c}
				0\\
				v_{1}^{3}
			\end{array}\right)}}$ for $\hm{v}=\left(\begin{array}{c}
		v_{1}\\
		v_{2}
	\end{array}\right)$.
	
	Given the equation above, we can differentiate \eqref{eq:vecQeq} respect
	to $x_{m}$, $m=1,2,3$ and $\beta^{m},$$m=1,2,3$.  First of all, differentiating with respect to $x_{m}$, we get
	\begin{equation}\label{eq:kerQ}
		JH_{v_{\beta}}\left(\partial_{x_{m}}\hm{Q}_{\beta}\right)+\beta\cdot\nabla\left(\partial_{x_{m}}\hm{Q}_{\beta}\right)=0
	\end{equation}
	where 
	\begin{equation}\label{eq:Hv}
		H_{V_{\beta}}:=\left(\begin{array}{cc}
			\Delta-1+3Q_{\beta}^{2} & 0\\
			0 & 1
		\end{array}\right)=:H_0+\mathcal{V}_{\beta}.
	\end{equation}
	
	Differentiating with respect to $\beta^{m}$, one has
	\begin{equation}\label{eq:gkerQ}
		JH_{v_{\beta}}\left(\partial_{\beta^{m}}\hm{Q}_{\beta}\right)+\beta\cdot\nabla\left(\partial_{\beta^{m}}\hm{Q}_{\beta}\right)=-\partial_{x_{m}}\hm{Q}_{\beta},
	\end{equation}

	To study the stability properties of multi-solitons, the linearzation of the equation around a multi-soliton is given as the following:
	denoting $V_\beta=- 3Q^2_{\beta}$ and \[\boldsymbol{u}\left(t\right)=\left(\begin{array}{c}
		u\\
		u_{t}
	\end{array}\right)\] then the lineration around $\hm{Q}(\hm{\beta}(t),\hm{y}(t))$, c.f. \eqref{eq:sumQ}, is given as
	\begin{equation}
		\partial_{t}\boldsymbol{u}\left(t\right)=JH\left(t\right)\boldsymbol{u}\left(t\right),\label{eq:dy1}
	\end{equation}
	where
	\[
	J:=\left(\begin{array}{cc}
		0 & 1\\
		-1 & 0
	\end{array}\right),\,\ \ H\left(t\right):=\left(\begin{array}{cc}
		-\Delta+1+\sum_{j=1}^{N} V_{\beta_{j}(t)}(\cdot-y_{j}(t)) & 0\\
		0 & 1
	\end{array}\right).
	\]
	Denote the evolution operator of the above system as $\mathcal{T}\left(t,s\right)$.

	From the spectral computations  on each potential, see \eqref{eq:negaeigen} and \eqref{eq:zeromodes}, following C\^ote-Martel \cite{CMart}, C\^ote-Martel \cite[Lemma 1]{CMu} and Chen-Jendrej \cite{CJ},  we now give explicit formulae for the stable, unstable
	and (iterated) null components of the flow.
	First of all, we define
	\begin{align}
		\cY_{\beta}^-(x) &:= \eee^{\gamma\nu\beta\cdot x}(\phi -\gamma\beta\cdot\grad \phi-\gamma\nu\phi)_\beta(x), \label{eq:kg-Ym-def} \\
		\cY_{\beta}^+(x) &:= \eee^{-\gamma\nu\beta\cdot x}
		(\phi, -\gamma\beta\cdot\grad \phi+\gamma\nu\phi)_\beta(x),\label{eq:kg-Yp-def} 
		\\ \cY_{m, \beta}^0(x) &:= (\phi_{j,m}^0, -\gamma\beta\cdot\grad \phi_{j,m}^0)_\beta(x),\label{eq:kg-Y0-def} 
		\\ \cY_{m, \beta}^1(x) &:= ({-}(\beta \cdot x)\phi_{j,m}^0, \gamma \phi_{j,m}^0 + \gamma(\beta\cdot x)(\beta\cdot \grad \phi_{j,m}^0))_\beta(x),\label{eq:kg-Y1-def} 
		\\ \alpha_{\beta}^-(x) &:= J\cY_{k, \beta}^+(x) = \eee^{-\gamma\nu\beta\cdot x}({-}\gamma\beta\cdot\grad \phi + \gamma\nu\phi, -\phi)_\beta(x), \label{eq:kg-am-def} \\
		\alpha_{\beta}^+(x) &:= J\cY_{k, \beta}^-(x) = \eee^{\gamma\nu\beta\cdot x}({-}\gamma\beta\cdot\grad \phi - \gamma\nu\phi, -\phi)_\beta(x), \label{eq:kg-ap-def} \\
		\alpha_{m, \beta}^0(x) &:= J\cY_{m, \beta}^0(x) = ({-}\gamma \beta\cdot\grad \phi_{j,m}^0, -\phi_{j,m}^0)_\beta(x),\label{eq:kg-a0-def} 
		\\ \alpha_{m, \beta}^1(x) &:= J\cY_{m, \beta}^1(x) = (\gamma \phi_{j,m}^0 + \gamma(\beta\cdot x)(\beta\cdot \grad \phi_{j,m}^0), (\beta \cdot x)\phi_{j,m}^0)_\beta(x)\label{eq:kg-a1-def} 
	\end{align} 	where  $m \in \{1,2,3\}$. 
	In particular, using the vector forms and explicit  computations, one has	
	\begin{align}
		\mathcal{Y}_{m,\beta}^{0}(x)=\partial_{x_{m}}\hm{Q}_{\beta}(x),\\
		\mathcal{Y}_{m,\beta}^{1}(x)=\frac{1}{\gamma^{2}}\partial_{\beta^m}\hm{Q}_{\beta}(x).
	\end{align}
	Then we have the following important modes for the moving-potential problem \eqref{eq:dy1} above:
	\begin{align*}
		\cY_{j}^-(t) &:= \cY_{\beta_j(t)}^-(\cdot - y_j(t)), \\
		\cY_{j}^+(t) &:= \cY_{\beta_j(t)}^+(\cdot - y_j(t)), \\
		\cY_{j, m}^0(t) &:= \cY_{m, \beta_j(t)}^0(\cdot - y_j(t)), \\
		\cY_{j, m}^1(t) &:= \cY_{m, \beta_j(t)}^1(\cdot - y_j(t)), \\
		\alpha_{j}^-(t) &:= \alpha_{\beta_j(t)}^-(\cdot - y_j(t)), \\
		\alpha_{j}^+(t) &:= \alpha_{\beta_j(t)}^+(\cdot - y_j(t)), \\
		\alpha_{j, m}^0(t) &:= \alpha_{m, \beta_j(t)}^0(\cdot - y_j(t)), \\
		\alpha_{j, m}^1(t) &:= \alpha_{m, \beta_j(t)}^1(\cdot - y_j(t)),
	\end{align*}
	To obtain dispersive estimates, naturally, one has to restrict flows onto the centre-stable space.  In this subspace, one needs to further remove  null components to avoid the polynomial growth.
	\begin{defn}\label{def:zeroproj}
		We define the projection $\pi_0(t)$ as the the projection onto the subspace spanned by $\cY_{j, m}^0(t)$ and $\cY_{j, m}^1(t)$.  We also define  $\pi_\pm(t)$ as projections onto the spans of $\cY_{j}^{\pm}(t)$ respectively.  Finally, we define $\pi_c(t)=1-\pi_0(t)-\pi_+(t)-\pi_-(t).$
	\end{defn}

	\subsubsection{Functional spaces}
	
	Next, we recall important functional spaces in our analysis. Setting
	\begin{equation}
		\mathcal{D}:=\sqrt{1-\Delta},\label{eq:D}
	\end{equation}
	firstly we define the space
	\begin{equation}
		\mathcal{D}^{\frac{\nu}{2}}L_{x}^{2}:=\left\{ g:\,g=\mathcal{D}^{\frac{\nu}{2}}f,\,\text{for some}\,f\in L_{x}^{2}\right\} .\label{eq:Dweightspace}
	\end{equation}
	Given $g\in\mathcal{D}^{\frac{\nu}{2}}L_{x}^{2}$ with $g=\mathcal{D}^{\frac{\nu}{2}}f$, 
	then norm of it given by
	\begin{equation}
		\text{\ensuremath{\left\Vert g\right\Vert }}_{\mathcal{D}^{\frac{\nu}{2}}L_{x}^{2}}:=\text{\ensuremath{\left\Vert \mathcal{D}^{-\frac{\nu}{2}}g\right\Vert }}_{L_{x}^{2}}=\text{\ensuremath{\left\Vert f\right\Vert }}_{L_{x}^{2}}.\label{eq:Dweightnorm}
	\end{equation}
	Secondly, we define the weighed $L^{2}$ space with the center $y$
	as
	\begin{equation}
		L_{x}^{2}\left\langle \cdot-y\right\rangle ^{\sigma}:=\left\{ g:g=\left\langle \cdot-y\right\rangle ^{-\alpha}f,\,\text{for some}\,f\in L_{x}^{2}\right\} .\label{eq:Xweightspace}
	\end{equation}
	Given $g\in L_{x}^{2}\left\langle \cdot-y\right\rangle ^{\sigma}$ with
	$g=\left\langle \cdot-y\right\rangle ^{-\alpha}f$ then norm of it given
	by
	\begin{equation}
		\text{\ensuremath{\left\Vert g\right\Vert }}_{L_{x}^{2}\left\langle \cdot-y\right\rangle ^{\sigma}}:=\text{\ensuremath{\left\Vert \left\langle \cdot-y\right\rangle ^{\alpha}g\right\Vert }}_{L_{x}^{2}}=\text{\ensuremath{\left\Vert f\right\Vert }}_{L_{x}^{2}}.\label{eq:Xweightnorm}
	\end{equation}
	Let $B_{p,q}^{s}(\mathbb{R}^{d})$ denote the inhomogeneous Besov
	space based on $L^{p}(\mathbb{R}^{d})$ for any $d\ge1$, $s\in\mathbb{R}$
	and $p,q\in[1,\infty]$. For brevity, we use the standard notation
	\[
	H^{s}:=B_{2,2}^{s},\ C^{s}:=B_{\infty,\infty}^{s}.
	\]
	The homogeneous versions are denoted by $\dot{B}_{p,q}^{s}$, $\dot{H}^{s}$
	and $\dot{C}^{s}$, respectively. For $s\in(0,1)$, we have the equivalent
	semi-norms by the difference, see Bergh-L\"ofstr\"om \cite[Exercice 7,  page 162]{BL} and  Bahouri-Chemin-Danchin \cite[Theorem 2.36]{BCD},
	\[
	\|\varphi\|_{\dot{B}_{p,q}^{s}}\simeq\|\sup_{|y|\le\sigma}\|\varphi(x)-\varphi(x-y)\|_{L^{p}}\|_{L^{q}(d\sigma/\sigma)}.
	\]
	Finally, the weighted $L^{2}$ space,  $L^{2,s}(\mathbb{R}^{d})$, is defined
	by the norm
	\[
	\|\varphi\|_{L^{2,s}\left(\mathbb{R}^{d}\right)}=\|\left\langle x\right\rangle ^{s}\varphi\|_{L^{2}(\mathbb{R}^{d})}
	\]
	for any $s\in\mathbb{R}$. Hence $L^{2,s}$ is the Fourier image
	of $H^{s}$.

	\subsection{Strichartz estimates}

	Motivated by the modulation and endpoint Strichartz estimates, we have the following assumptions on trajectories.
	
	\noindent\textbf{Assumption on trajectories.} 
	Let $y_{j}(t)$ be positions of the potentials. Denote $\beta_{j}(t)$
	the Lorentz factor for the potential $V_j$. We write $\boldsymbol{\beta}(t)=(\beta_{1}(t),\ldots,\beta_{N}(t))$,
	$\boldsymbol{y}(t)=(y_{1}(t),\ldots,y_{N}(t))$.
	Motivated by the modulation equations from the stability analysis, we impose that $y_{j}\left(0\right)=y_j,$
	$\beta_{j}\left(0\right)=\beta_{j}$ with $|\beta_j|<1$, $\beta_j\neq \beta_k$ for $k\neq j$ and
	\begin{equation}
		\left\Vert \beta_{j}'\left(t\right)\right\Vert _{L_{t}^{1}\bigcap L_{t}^{\infty}}+\left\Vert y'_{j}\left(t\right)-\beta_{j}\left(t\right)\right\Vert _{L_{t}^{1}\bigcap L_{t}^{\infty}}\ll1.\label{eq:maincond}
	\end{equation}	
	Now we recall Strichartz estimates for the associated Klein-Gordon
	models from \cite{CJ2}, see Theorem 1.4 there.  We reformulate the statement adapted to nonlinear applications in this paper.
	\begin{thm}[\cite{CJ2}]
		\label{thm:mainthmlinear}For $0<\zeta\ll1$ and $\sigma>14$, denote
		\[
		\mathfrak{W}:=L_{t}^{2}\left(\bigcap_{j=1}^{J}\mathcal{D}^{\frac{\zeta}{2}}L_{x}^{2}\left\langle \cdot-y_{j}\left(t\right)\right\rangle ^{-\sigma}\right)
		\]
		and the Strichartz norm
		\[
		S=L_{t}^{\infty}L_{x}^{2}\bigcap L_{t}^{2}B_{6,2}^{-5/6}.
		\]
		Using notations from \eqref{eq:dy1}, consider the system
		\begin{equation}\label{eq:equ}
			\partial_{t}\boldsymbol{u}\left(t\right)=JH\left(t\right)\boldsymbol{u}\left(t\right)+\boldsymbol{F}
		\end{equation}such that $$\pi_0(t)\bm{u}(t)=0,\,\,\forall t\in\mathbb{R}.$$
		Denote \begin{equation}\label{eq:linearcond}
			\begin{aligned}
				a_j^{\pm}(t):=\la \alpha_{ \beta_j(t)}^{\pm}(\cdot - y_j(t)), \bs u(t)\ra,\, 1 \leq j \leq N
			\end{aligned}
		\end{equation}
		\begin{equation}
			a^\ell_{j,m}:=\la \alpha^\ell_{m, \beta_j(t)}(t)(\cdot - y_j(t)), \bs u(t)\ra,\, 1 \leq j \leq N,\,1\leq m\leq 3,\,\ell=0,1.
		\end{equation}		
		We further set \begin{equation}
			\mathcal{B}(t):=\sum_{\ell=\pm}\sum_{j=1}^N	 \left | 	a_j^{\ell}(t) \right|  +\sum_{\ell=0}^1\sum_{j=1}^N\sum_{m=1}^3 \left | 	a^{\ell}_{m,j}(t) \right |.
		\end{equation}
		Then with the notations above, one has the local energy decay estimate and  Strichartz estimates
		\begin{equation}
			\left\Vert \mathcal{D}u\left(t\right)_{1}\right\Vert _{\mathfrak{W}\bigcap S}+\left\Vert u\left(t\right)_{2}\right\Vert _{\mathfrak{W}\bigcap S}\lesssim\left\Vert \hm{u}\left(0\right)\right\Vert _{\mathcal{H}}+\|\cD F_1\|_{(\mathfrak{W}\cap S)^{*}}+\| F_2\|_{(\mathfrak{W}\cap S)^{*}}+\left\Vert 	\mathcal{B} \right\Vert_{L^1_t\bigcap L^\infty_t}.\label{eq:mainstrichartz}
		\end{equation}
		Moreover, indeed $\boldsymbol{u}\left(t\right)$
		scatters to a free wave. There exists $\boldsymbol{\psi}_{+}\in \mathcal{H}$
		such that
		\begin{equation}
			\left\Vert \boldsymbol{u}\left(t\right)-e^{JH_{0}t}\boldsymbol{\psi}_{+}\right\Vert _{\mathcal{H}}\rightarrow0,\,\,t\rightarrow\infty.\label{eq:mainscatter}
		\end{equation}
	\end{thm}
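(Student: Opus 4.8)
The estimate \eqref{eq:mainstrichartz} together with the scattering \eqref{eq:mainscatter} is, up to a change of bookkeeping, \cite[Theorem 1.4]{CJ2}; the plan is to recall the core linear bound, recast it in the present notation using the projections of Definition \ref{def:zeroproj}, and then extract scattering by a Cook-type argument.

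\emph{The core linear estimate.} The heart is the dispersive bound for the homogeneous flow $\mathcal T(t,s)$ of \eqref{eq:dy1}: under the small-acceleration condition \eqref{eq:maincond} and the well-separated configuration of the centres, $\mathcal T(t,0)$ maps $\mathcal H$ into $\mathfrak W\cap S$ (componentwise, with the $\mathcal D$-weight on the first slot) provided one works orthogonally to the finitely many bad modes carried by the $N$ potentials. The ingredients behind this, which I would cite rather than reprove, are: (i) for a single \emph{stationary} potential $V=-3Q^2$, the operator $L=-\Delta+1-3Q^2$ has no gap eigenvalue in $(0,1]$ and no threshold resonance at $1$ (Section~\ref{subsec:spectral}), so the limiting absorption principle and the attendant resolvent bounds give Strichartz estimates and local smoothing for the corresponding Klein-Gordon evolution once the eigenfunctions $\phi$ and $\partial_{x_m}Q$ are projected off; (ii) a Lorentz change of frame reducing each moving potential to a stationary one, together with the exponential dichotomy of \cite{CJ}, which produces the stable/unstable/centre splitting of \eqref{eq:dy1}; and (iii) a partition of unity over the $N$ potential windows followed by a bootstrap, in which the cross-interactions are negligible because the tail of the $k$-th potential on the window of the $j$-th is exponentially small in $|y_j-y_k|\gtrsim\delta t+\rho$ and hence summable in the dual norm $(\mathfrak W\cap S)^*$.

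\emph{Recasting with the projections.} Since $\pi_0(t)\boldsymbol u(t)\equiv0$, the polynomially growing null directions $\cY^0_{j,m},\cY^1_{j,m}$ are absent, and it remains to handle the stable and unstable directions. Split $\boldsymbol u=\boldsymbol u_c+\boldsymbol u_++\boldsymbol u_-$ with $\boldsymbol u_\bullet=\pi_\bullet(t)\boldsymbol u$. Because the projectors depend on $t$ through $y_j(t),\beta_j(t)$, the centre component obeys $\partial_t\boldsymbol u_c=JH(t)\boldsymbol u_c+\pi_c(t)\boldsymbol F+G(t)$, where $G(t)=(\partial_t\pi_c)(t)\boldsymbol u$ encodes both the coupling of $\boldsymbol u_c$ to the finite-dimensional bad subspaces and the time-dependence of the projectors; using \eqref{eq:maincond} and the biorthogonality of the $\alpha$- and $\cY$-families, $G$ is absorbable into the core estimate at the price of $\|\mathcal B\|_{L^1_t\cap L^\infty_t}$ on the right-hand side. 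As $\boldsymbol u_c$ is then orthogonal to every bad mode, the core estimate yields \eqref{eq:mainstrichartz} for $\boldsymbol u_c$; finally $\boldsymbol u_\pm(t)=\sum_j c_j^\pm(t)\,\cY^\pm_j(t)$ with $|c_j^\pm(t)|\lesssim\sum_k|a^\pm_k(t)|$, so $\|\boldsymbol u_\pm(t)\|_{\mathcal H}\lesssim\mathcal B(t)$ pointwise and restoring $\boldsymbol u_\pm$ only changes the right-hand side by $\|\mathcal B\|_{L^1_t\cap L^\infty_t}$, proving \eqref{eq:mainstrichartz}.

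\emph{Scattering, and the main difficulty.} From \eqref{eq:mainstrichartz}, $\boldsymbol u$ is bounded in $\mathcal H$ and lies in $\mathfrak W$. Writing $JH(t)=JH_0+J\mathcal V(t)$ with $\mathcal V(t)$ the matrix potential in $H(t)$, exponentially localized around the $y_j(t)$, Duhamel gives $e^{-tJH_0}\boldsymbol u(t)=\boldsymbol u(0)+\int_0^t e^{-sJH_0}\big(J\mathcal V(s)\boldsymbol u(s)+\boldsymbol F(s)\big)\,ds$. By the inhomogeneous Strichartz and smoothing estimates for the free Klein-Gordon group, $\big\|\int_{t_1}^{t_2}e^{-sJH_0}\big(J\mathcal V(s)\boldsymbol u(s)+\boldsymbol F(s)\big)\,ds\big\|_{\mathcal H}$ is bounded by the dual Strichartz norm of $\boldsymbol F$ on $[t_1,t_2]$ plus $\|\boldsymbol u\|_{\mathfrak W([t_1,t_2])}$ — the latter after pairing the exponential spatial localization of $\mathcal V(s)$ against the local-energy norm — and both tend to $0$ as $t_1\to\infty$. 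Hence $e^{-tJH_0}\boldsymbol u(t)$ converges in $\mathcal H$ to some $\boldsymbol\psi_+$, which is \eqref{eq:mainscatter}. Taking \cite{CJ2} as a black box, the delicate points are the commutator bookkeeping in the recasting step (showing that $G$ really is of the claimed size, uniformly) and this scattering upgrade, in which it is essential that it is the \emph{local-energy} ($L^2_t$) component of the norm, not the Strichartz component alone, that controls $J\mathcal V(s)\boldsymbol u(s)$; for a self-contained proof the real obstacle is ingredient~(iii), namely multi-potential local-energy decay with moving, well-separated centres of small acceleration — in particular the compatibility of the exponential dichotomy with local smoothing and the uniform closure of the bootstrap.
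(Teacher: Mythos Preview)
The paper does not prove this theorem: it is stated as a direct citation of \cite[Theorem~1.4]{CJ2}, merely reformulated in the present notation, and is followed only by remarks (on spectral assumptions, restriction to finite intervals, and the role of modulation). Your proposal is consistent with this---you correctly identify the source and outline the architecture of the proof in \cite{CJ2} (single-potential Strichartz via the spectral input of Section~\ref{subsec:spectral}, Lorentz reduction plus the exponential dichotomy of \cite{CJ}, and a multi-centre bootstrap)---but you are supplying more than the paper itself does. For the purposes of this paper the ``proof'' is simply the reference to \cite{CJ2}; your sketch of the recasting step and the Cook/Yang--Feldman scattering argument is a reasonable expansion, and indeed the scattering portion reappears explicitly in the proof of Theorem~\ref{thm:orbitasy3} later in the paper.
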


	\begin{rem}
In the statement of Strichartz estimates in \cite{CJ2}, we have spectral assumptions on the linear operator associated with each potential. Note that the theorem above is stated with potentials coming from the linearization around the ground state. The spectral assumptions in \cite{CJ2} hold in this setting, see \S \ref{subsec:spectral}, in particular this is no resonance on the bottom of the spectrum of each linear operator.
	\end{rem}
	\begin{rem}
		One can also restrict the time interval in the estimate \eqref{eq:mainstrichartz}
		to $\left[0,T\right]$. We can also take the initial data at $t_{0}=T$
		and solve the equation backwards. Then one has
		\[
		\left\Vert \mathcal{D}u\left(t\right)_{1}\right\Vert _{\mathfrak{W}\bigcap S}+\left\Vert u\left(t\right)_{2}\right\Vert _{\mathfrak{W}\bigcap S}\lesssim\left\Vert u\left(T\right)\right\Vert _{\mathcal{H}}+\|\cD F_1\|_{(\mathfrak{W}\cap S)^{*}}+\| F_2\|_{(\mathfrak{W}\cap S)^{*}}+\left\Vert 	\mathcal{B} \right\Vert_{L^1_t\bigcap L^\infty_t}.
		\]
	\end{rem}
	\begin{rem}
		In most applications, one always ensures  $	a^{\ell}_{m,j}(t)=0$ via modulation techniques.
	\end{rem}
	\subsection{Modulations}
	If a solution stays  close to the family of multi-solitons, then one can find a time-dependent path in the family of multi-solitons such that  for each time, the difference between  the solution and the multi-soliton parameterized by  this path satisfies the desired orthogonal conditions that Strichartz estimates require.
	
	\begin{lem}\label{lem:mod} There exist $\kappa>0$ large  and $\eta>0$ small such that the following holds. 
		Let $\hm{\psi}(t)$ be a solution to \eqref{eq:dynkg} and
		suppose
		\begin{equation}
			\sup_{t\in\mathbb{R}^{+}}\inf_{\hm{\beta}\in\mathbb{R}^{3N},\hm{y}\in\mathfrak{S}_{\kappa}}\left\Vert \boldsymbol{\psi}(t)-\boldsymbol{Q}(\hm{\beta},\hm{y})\right\Vert _{\mathcal{H}}\lesssim\eta\label{eq:orbcond}
		\end{equation}where $\mathfrak{S}_{\kappa}:=
		\Large\{\hm{y}\in\mathbb{R}^{3N}| |y_j-y_k|\geq \kappa, j\neq k\Large\}$.
		Then there exist Lipschitz functions  $\beta_{j}(t)\in\mathbb{R}^{3}$ and $y_{j}(t)\in\mathbb{R}^{3}$
		such that writing $\hm{u}(t)=\boldsymbol{\psi}(t)-\boldsymbol{Q}\left(\hm{\beta}(t),\hm{y}(t)\right)$,
		one has
		\begin{equation}
			\left\langle \alpha_{m, \beta_j(t)}^1(\cdot - y_j(t)),\hm{u}(t)\right\rangle =\left\langle \alpha_{m, \beta_j(t)}^0(\cdot - y_j(t)),\hm{u}(t)\right\rangle =0\label{eq:modconds}
		\end{equation}
		for $t\in\mathbb{R}^{+}$ and $j=1\ldots N,\,m=1,2,3$.
		
		Moreover, we also have
		\begin{equation}
			|\dot{\beta}_{j}(t)|\lesssim\mathcal{O}\left((\left\Vert \hm{u}\right\Vert _{\mathcal{H}}+1)e^{-\kappa/2}\right)+\sum_{m=1}^{3}\left|\left\langle \alpha_{m,\beta_{j}(t)}^{0}(\cdot-y_{j}(t)),\hm {q}(t)\right\rangle \right|\label{eq:mod1lem}
		\end{equation}
		and
		\begin{equation}
			\left|y'_{j}(t)-\beta_{j}(t)\right|\lesssim\mathcal{O}\left((\left\Vert \hm{u}\right\Vert _{\mathcal{H}}+1)e^{-\kappa/2}\right)+\sum_{m=1}^{3}\left|\left\langle \alpha_{m,\beta_{j}(t)}^{1}(\cdot-y_{j}(t)),\hm {q}(t)\right\rangle \right|\label{eq:mod2lem}
		\end{equation}
		where $\hm {q}(t)=\Large(0,3\sum_{i=1}^{N}Q_{\beta_{i}}(\cdot-y_{i}(t))u^{2}\Large)^T. $	
	\end{lem}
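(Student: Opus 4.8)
The plan is to set up the modulation equations via the implicit function theorem, then exploit the exponential separation of the solitons to control the off-diagonal interaction terms.

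First I would fix $t$ and treat the orthogonality conditions \eqref{eq:modconds} as a system of $6N$ equations for the $6N$ unknowns $(\hm\beta,\hm y)\in\mathbb{R}^{3N}\times\mathbb{R}^{3N}$. Define, for a given $\hm\psi(t)$ close to the family, the map
\[
\Phi(\hm\beta,\hm y;\hm\psi)=\Big(\big\langle \alpha_{m,\beta_j}^0(\cdot-y_j),\hm\psi-\hm Q(\hm\beta,\hm y)\big\rangle,\ \big\langle \alpha_{m,\beta_j}^1(\cdot-y_j),\hm\psi-\hm Q(\hm\beta,\hm y)\big\rangle\Big)_{j,m}.
\]
When $\hm\psi=\hm Q(\hm\beta_0,\hm y_0)$ lies exactly on the family, $\Phi$ vanishes at $(\hm\beta_0,\hm y_0)$. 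The key computation is the Jacobian of $\Phi$ in $(\hm\beta,\hm y)$ at this point: differentiating $\hm Q$ produces the generalized kernel elements $\partial_{x_m}\hm Q_{\beta_j}=\cY_{j,m}^0$ and $\gamma^2\cY_{j,m}^1=\partial_{\beta^m}\hm Q_{\beta_j}$ (using the identities just above Definition \ref{def:zeroproj}), so the diagonal blocks of the Jacobian are the Gram-type pairings $\langle \alpha_{m,\beta_j}^\ell,\cY_{j,m'}^{\ell'}\rangle$, which are nondegenerate by the standard biorthogonality of the $\alpha$'s and $\cY$'s coming from the spectral decomposition of the single-soliton linearized operator. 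The off-diagonal blocks ($j\neq k$) involve pairings of objects centered at $y_j$ and $y_k$, hence are $O(e^{-c|y_j-y_k|})=O(e^{-c\kappa})$ because $Q$, $\phi$ and all their derivatives decay exponentially; choosing $\kappa$ large makes the full Jacobian invertible with uniformly bounded inverse. The implicit function theorem then yields Lipschitz functions $\hm\beta(t),\hm y(t)$ solving \eqref{eq:modconds}, for $\hm\psi$ in an $\eta$-neighborhood with $\eta$ small; the Lipschitz dependence on $t$ follows from the Lipschitz (indeed continuous, and in fact one can arrange $C^1$ in $t$ a.e.) dependence of $\hm\psi(t)$ on $t$ as a solution in $\mathcal{H}$, combined with the uniform bound on $(D_{\hm\beta,\hm y}\Phi)^{-1}$.

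Next I would derive the dynamical bounds \eqref{eq:mod1lem}–\eqref{eq:mod2lem}. Writing $\hm u(t)=\hm\psi(t)-\hm Q(\hm\beta(t),\hm y(t))$ and using \eqref{eq:dynkg} together with the traveling-wave equation \eqref{eq:vecQeq} for each $\hm Q_{\beta_j}$, one gets
\[
\partial_t\hm u = JH(t)\hm u \;-\;\sum_j\Big(\dot\beta_j\cdot\partial_{\beta}\hm Q_{\beta_j}(\cdot-y_j)+(\dot y_j-\beta_j)\cdot\partial_x\hm Q_{\beta_j}(\cdot-y_j)\Big)\;+\;\hm q(t)\;+\;(\text{interaction terms}),
\]
where $\hm q(t)=(0,\,3\sum_i Q_{\beta_i}(\cdot-y_i)u^2)^T$ is the quadratic-in-$u$ part of the nonlinearity and the interaction terms are the cross-terms $\sum_{i\neq k}$ in expanding $(\sum_i Q_{\beta_i})^3$, each of size $O(e^{-c\kappa})$ in the relevant weighted norm. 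Then I differentiate the orthogonality conditions \eqref{eq:modconds} in $t$: the terms $\frac{d}{dt}\langle \alpha_{m,\beta_j(t)}^\ell(\cdot-y_j(t)),\hm u(t)\rangle=0$ produce, after pairing the equation for $\partial_t\hm u$ against $\alpha_{m,\beta_j}^\ell$, a linear system for the $\dot\beta_j,\dot y_j-\beta_j$ whose matrix is again the nondegenerate Gram matrix (up to $O(e^{-c\kappa})$ corrections) and whose right-hand side collects: (i) the contribution of $JH(t)\hm u$ paired with $\alpha_{m,\beta_j}^\ell$, which vanishes on the diagonal because $\alpha$'s are (generalized) eigenfunctions of $JH_{V_{\beta_j}}$ and leaves only an $O(e^{-c\kappa})\|\hm u\|_{\mathcal H}$ off-diagonal remainder; (ii) the term $\langle \alpha_{m,\beta_j}^\ell,\hm q(t)\rangle$ producing exactly the quadratic term displayed in \eqref{eq:mod1lem}–\eqref{eq:mod2lem}; (iii) the interaction terms, $O((\|\hm u\|_{\mathcal H}+1)e^{-c\kappa})$; and (iv) terms where the time derivative hits $\alpha_{m,\beta_j(t)}^\ell(\cdot-y_j(t))$, which are themselves proportional to $\dot\beta_j$ and $\dot y_j$ and so can be absorbed into the left-hand side once $\kappa$ is large and $\eta$ small. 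Inverting the matrix gives the stated bounds with $\kappa$ in place of an abstract constant in the exponent (one absorbs $c$ into the choice of the large parameter, or equivalently replaces $e^{-c\kappa}$ by $e^{-\kappa/2}$ after enlarging $\kappa$).

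The main obstacle I anticipate is the careful bookkeeping of the interaction/cross terms and the "$\frac{d}{dt}$ hits the modulation vector" terms: one must check that \emph{every} term not explicitly written in \eqref{eq:mod1lem}–\eqref{eq:mod2lem} is genuinely either $O(e^{-\kappa/2})$, $O(\|\hm u\|_{\mathcal H}e^{-\kappa/2})$, or small enough (via $\eta$) to be reabsorbed on the left, using only exponential decay of $Q,\phi$ and the separation $|y_j-y_k|\ge\kappa$; and that these bounds are uniform in $t\ge 0$. A secondary point is confirming the Lipschitz (rather than merely continuous) regularity in $t$, which is standard once one knows $\hm\psi\in C(\mathbb{R}^+;\mathcal H)$ solves the equation and the implicit-function map is smooth with uniformly bounded derivative, so this should not be a real difficulty.
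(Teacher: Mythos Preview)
Your proposal is correct and follows essentially the same approach as the paper: implicit function theorem for the orthogonality conditions via the map you call $\Phi$ (the paper's $\mathfrak{F}$), then time-differentiation of \eqref{eq:modconds} combined with the generalized-kernel identities \eqref{eq:kerQ}--\eqref{eq:gkerQ} to cancel the diagonal linear part, leaving the quadratic term $\hm q$ plus $O(e^{-\kappa/2})$ interaction remainders and absorbable modulation terms. The only minor refinement is that in item (i) the diagonal contribution does not literally vanish; rather the combination $\langle \partial_t\alpha_{m,\beta_j}^\ell,\hm u\rangle+\langle \alpha_{m,\beta_j}^\ell,JH_j\hm u\rangle$ reduces (via \eqref{eq:kerQ}, \eqref{eq:gkerQ} and, for $\ell=1$, the already-imposed orthogonality $\langle\alpha_{m,\beta_j}^0,\hm u\rangle=0$) to terms proportional to $\dot\beta_j,\dot y_j-\beta_j$ times $\langle\cdot,\hm u\rangle$, which you correctly absorb on the left.
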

	
	\begin{proof}
		Given condition \eqref{eq:orbcond} and the continuity of the Klein-Gordon
		flow in $\mathcal{H}$, one can find sets of parameters $\tilde{\hm{\beta}}(t)=(\tilde{\beta}_{1}(t),\ldots\tilde{\beta}_{J}(t))$
		and $\tilde{\hm{y}}(t)=(\tilde{y}_{1}(t),\ldots\tilde{y}_{J}(t))$
		such that $\tilde{\hm{{y}}}(t)\in \mathfrak{S}_{\kappa/2}:$
		\[
		\sup_{t\in\mathbb{R}^{+}}\left\Vert \boldsymbol{\psi}(t)-\boldsymbol{Q}\left(\tilde{\hm{\beta}}(t),\tilde{\hm{{y}}}(t)\right)\right\Vert _{\mathcal{H}}\lesssim\eta.
		\]
		Then orthogonality conditions \eqref{eq:modconds} follow from the classical
		implicit function theorem. Indeed, for a fixed $\hm{\psi}\in\mathcal{H}$,
		we define the map
		\[
		\mathfrak{F}:\mathbb{R}^{3N}\times\mathbb{R}^{3N}\rightarrow\mathbb{R}^{3N}\times\mathbb{R}^{3N}
		\]
		as
		\begin{equation}\label{eq:Fmap}
			\mathfrak{F}\left(\hm{\beta}, \hm{y}\right)\mapsto\left(\left\langle \alpha_{m,\beta_{j}}^{1}(\cdot-y_{j}),\hm{u}\right\rangle ,\left\langle \alpha_{m,\beta_{j}}^{0}(\cdot-y_{j}),\hm{u}\right\rangle \right)_{j=1\ldots N,\,m=1,2,3}
		\end{equation}
		where $\hm{u}=\boldsymbol{\psi}-\boldsymbol{Q}\left(\hm{\beta}, \hm{y}\right)$.
		Take a neighborhood of $(\tilde{\hm{\beta}},\tilde{\hm{y}})$ such
		that
		\[
		\left\Vert \boldsymbol{\psi}-\boldsymbol{Q}\left(\hm{\tilde{\beta}},\hm{\tilde{y}}\right)\right\Vert _{\mathcal{H}}\lesssim\eta.
		\]
		Then in this neighborhood, $|\mathfrak{F}\left(\hm{\beta}, \hm{y}\right)|\lesssim \eta.$ 
		Taking partial derivatives of $\mathfrak{F}$ with respect to $\hm{y}$
		and $\hm{\beta}$, given that $\eta$ is sufficiently small, the Jacobian
		is non-singular. From the implicit function theorem, then we can write find a unique $(\hm{\beta}, \hm{y})$
		such that with the decomposition,
		\[
		\hm{u}=\boldsymbol{\psi}-\boldsymbol{Q}\left(\hm{\beta}, \hm{y}\right)
		\]
		one has
		\[
		\left(\left\langle \alpha_{m,\beta_{j}}^{1}(\cdot-y_{j}),\hm{u}\right\rangle ,\left\langle \alpha_{m,\beta_{j}}^{0}(\cdot-y_{j}),\hm{u}\right\rangle \right)_{j=1\ldots N,\,m=1,2,3}=(0,0).
		\]
		We can apply the argument above for each $t\in\mathbb{R}^{+}$ to $\hm{\psi}(t)$ and $\hm{Q}\big(\tilde{\hm{\beta}}(t),\tilde{\hm{{y}}}(t)\big)$. With
		the Lipschitz continuity of $\hm{\psi}(t)$ in $t$, one
		find Lipschitz functions $\left(\hm{\beta}(t),\hm{y}(t)\right)$ such that \eqref{eq:modconds}
		hold.
		
		With $\left(\hm{\beta}(t),\hm{y}(t)\right)$ found above, given the decomposition $\hm{u}(t)=\boldsymbol{\psi}(t)-\boldsymbol{Q}\left(\hm{\beta}(t),\hm{y}(t)\right)$, the
		equation from $\hm{u}(t)$ can be written as 
		\begin{align*}
			\frac{d}{dt}\hm{u}(t) & =JH(t)\hm{u}(t)+\mathcal{I}(Q)+\mathfrak{I}_{1}(Q^{2},u)+\mathfrak{I}_{2}(Q,u^{2})+\hm{F}(u)\\
			& -\dot{\hm{\beta}}(t)\partial_{\hm{\beta}}\hm{Q}(\hm{\beta}(t),\hm{y}(t))-\left(\dot{\hm{y}}(t)-\hm{\beta}(t)\right)\partial_{\hm{y}}\hm{Q}(\hm{\beta}(t),\hm{y}(t))\\
			& =:JH(t)\hm{u}(t)+\mathcal{I}(Q)+\mathfrak{I}(Q,u)+\hm{F}(u)+\text{Mod}'(t)\nabla_{M}\hm{Q}(\hm{\beta}(t),\hm{y}(t))\\
			& =:JH(t)\hm{u}(t)+\mathcal{W}(x,t).
		\end{align*}
		where we introduced the notations
		\begin{align}
			\text{Mod}'(t)\nabla_{M}\hm{Q}(\hm{\beta}(t),\hm{y}(t)) & :=-\dot{\hm{\beta}}(t)\partial_{\hm{\beta}}\hm{Q}(\hm{\beta}(t),\hm{y}(t)) -\left(\dot{\hm{y}}(t)-\hm{\beta}(t)\right)\partial_{\hm{y}}\hm{Q}(\hm{\beta}(t),\hm{y}(t))\label{eq:modnotation} 
		\end{align}
		\begin{equation}
			\mathcal{I}(Q)=\left(\begin{array}{c}
				0\\
				\ensuremath{\left(\sum_{j=1}^{N}\sigma_{j}Q_{\beta_{j}(t)}(\cdot-y_{j}(t))\right)^{3}-\sum_{j=1}^{N}\left(\sigma_{j}Q_{\beta_{j}(t)}(\cdot-y_{j}(t))\right)^{3}}
			\end{array}\right).\label{eq:IQ}
		\end{equation}
		\begin{equation}
			\mathfrak{I}(Q,u)=\left(\begin{array}{c}
				0\\
				3\sum_{i\neq k}\sigma_i\sigma_k Q_{\beta_{i}}(\cdot-y_{i}(t))Q_{\beta_{k}}(\cdot-y_{k}(t))u+	3\ensuremath{\sum_{j=1}^{N}\sigma_{j}Q_{\beta_{j}(t)}(\cdot-y_{j}(t))u^{2}}
			\end{array}\right)\label{eq:JQu}
		\end{equation}
		\begin{equation}
			\hm{F}(u):=\left(\begin{array}{c}
				0\\
				u^{3}
			\end{array}\right)\label{eq:Fcubic}
		\end{equation}
		and sometime we use
		\[
		\mathcal{I}(Q)+\mathfrak{I}(Q,u)+\hm{F}(u)+\text{Mod}'(t)\nabla_{M}\hm{Q}(\hm{\beta}(t),\hm{y}(t))=:\mathcal{W}(x,t).
		\]
		Differentiating the second orthogonality condition in  \eqref{eq:modconds}, we get
		\begin{align*}
			\frac{d}{dt}\left\langle \alpha_{m,\beta_{j}(t)}^{0}(\cdot-y_{j}(t)),\hm{u}(t)\right\rangle  & =0\\
			& =\left\langle \frac{d}{dt}\left(\alpha_{m,\beta_{j}(t)}^{0}(\cdot-y_{j}(t))\right),\hm{u}(t)\right\rangle \\
			& +\left\langle \alpha_{m,\beta_{j}(t)}^{0}(\cdot-y_{j}(t)),\frac{d}{dt}\hm{u}(t)\right\rangle .
		\end{align*}
		First of all, using the equation for $\hm{u}(t)$, one has	
		\begin{align*}
			\left\langle \alpha_{m,\beta_{j}(t)}^{0}(\cdot-y_{j}(t)),\frac{d}{dt}\hm{u}(t)\right\rangle  & =\left\langle \alpha_{m,\beta_{j}(t)}^{0}(\cdot-y_{j}(t)),JH(t)\hm{u}(t)\right\rangle \\
			& +\left\langle \alpha_{m,\beta_{j}(t)}^{0}(\cdot-y_{j}(t)),\mathcal{I}(Q)+\mathfrak{I}(Q,u)+\hm{F}(u)\right\rangle \\
			& \left\langle \alpha_{m,\beta_{j}(t)}^{0}(\cdot-y_{j}(t)),\text{Mod}'(t)\nabla_{M}\hm{Q}(\hm{\beta}(t),\hm{y}(t))\right\rangle \\
			& =\left\langle \alpha_{m,\beta_{j}(t)}^{0}(\cdot-y_{j}(t)),JH_{j}(t)\hm{u}(t)\right\rangle \\
			& +\sum_{i\neq j}\left\langle \alpha_{m,\beta_{j}(t)}^{0}(\cdot-y_{j}(t)),\mathcal{V}_{i}(t)\hm{u}(t)\right\rangle \\
			& +\left\langle \alpha_{m,\beta_{j}(t)}^{0}(\cdot-y_{j}(t)),\mathcal{I}(Q)+\mathfrak{I}(Q,u)+\hm{F}(u)\right\rangle \\
			& +\left\langle \alpha_{m,\beta_{j}(t)}^{0}(\cdot-y_{j}(t)),\text{Mod}'(t)\nabla_{M}\hm{Q}(\hm{\beta}(t),\hm{y}(t))\right\rangle .
		\end{align*}
		where	we used notations
		\begin{equation}\label{eq:notationstwo}
			\mathcal{V}_j(t):=\left(\begin{array}{cc}
				0 & 0\\
				-V_{\beta_j(t)}(x-y_j(t)) & 0
			\end{array}\right),\,JH_{j}(t):=JH_0+\mathcal{V}_j(t).
		\end{equation}	
		Note that
		{\small\begin{align*}
				\left\langle \alpha_{m,\beta_{j}(t)}^{0}(\cdot-y_{j}(t)),\text{Mod}'(t)\nabla_{M}\hm{Q}(\hm{\beta}(t),\hm{y}(t))\right\rangle  & =-\left\langle \alpha_{m,\beta_{j}(t)}^{0}(\cdot-y_{j}(t)),\beta'_{j}(t)\partial_{\beta}\hm{Q}_{j}(\hm{\beta}_{j}(t), \hm{y}_{j}(t))\right\rangle \\
				& -\sum_{i\neq j}\left\langle \alpha_{m,\beta_{j}(t)}^{0}(\cdot-y_{j}(t)),\beta'_{i}(t)\partial_{\beta}\hm{Q}_{i}(\hm{\beta}_{i}(t), \hm{y}_{i}(t))\right\rangle \\
				& -\sum_{i\neq j}\left\langle \alpha_{m,\beta_{j}(t)}^{0}(\cdot-y_{j}(t)),\left(y'_{i}(t)-\beta_{i}(t)\right)\partial_{y}\hm{Q}_{i}(\hm{\beta}_{i}(t), \hm{y}_{i}(t))\right\rangle 
		\end{align*}}
		Next using the fact \eqref{eq:kerQ},  we compute
		\begin{align*}
			\left\langle \frac{d}{dt}\left(\alpha_{m,\beta_{j}(t)}^{0}(\cdot-y_{j}(t))\right),\hm{u}(t)\right\rangle +\left\langle \alpha_{m,\beta_{j}(t)}^{0}(\cdot-y_{j}(t)),JH_{j}(t)\hm{u}(t)\right\rangle \\
			=-\left\langle \left(y'_{j}(t)-\beta_{j}(t)\right)\cdot\nabla\alpha_{m,\beta_{j}(t)}^{0}(\cdot-y_{j}(t)),\hm{u}(t)\right\rangle +\left\langle \beta_{j}'(t)\partial_{\beta}\alpha_{m,\beta_{j}(t)}^{0}(\cdot-y_{j}(t)),\hm{u}(t)\right\rangle  & .
		\end{align*}
		Therefore, putting two results above together, we can conclude that
		\begin{align}
			\left\langle \alpha_{m,\beta_{j}(t)}^{0}(\cdot-y_{j}(t)),\beta'_{j}(t)\partial_{\beta}\hm{Q}_{j}(\hm{\beta}_{j}(t), \hm{y}_{j}(t))\right\rangle
			+\left\langle \beta_{j}'(t)\partial_{\beta}\alpha_{m,\beta_{j}(t)}^{0}(\cdot-y_{j}(t)),\hm{u}(t)\right\rangle  \label{eq:modeq1}\\
			+\left\langle \left(y'_{j}(t)-\beta_{j}(t)\right)\cdot\nabla\alpha_{m,\beta_{j}(t)}^{0}(\cdot-y_{j}(t)),\hm{u}(t)\right\rangle \nonumber \\
			=\left\langle \alpha_{m,\beta_{j}(t)}^{0}(\cdot-y_{j}(t)),\mathcal{I}(Q)+\mathfrak{I}(Q,u)+\hm{F}(u)\right\rangle\nonumber\\
			+\mathcal{O}\left(\Large(\sum_{i\neq j}|y'_{i}(t)-\beta_{i}(t)|+|\beta'_{i}(t)|\Large)e^{-\kappa/2}\right).\nonumber 
		\end{align}
		provided that the trajectories satisfy the separation condition. 
		
		Similar computations can be applied to the other orthogonality condition.
		Given the equation for $\alpha_{m,\beta_{j}(t)}^{1}(\cdot-y_{j}(t))$, \eqref{eq:gkerQ},
		and $\left\langle \alpha_{m,\beta_{j}(t)}^{0}(\cdot-y_{j}(t)),\hm{u}(t)\right\rangle =0$,
		one has
		\begin{align}
			\left\langle \alpha_{m,\beta_{j}(t)}^{1}(\cdot-y_{j}(t)),(y'_{j}(t)-\beta_{j}(t))\partial_{y}\hm{Q}_{j}(\hm{\beta}_{j}(t), \hm{y}_{j}(t))\right\rangle \nonumber \\
			+\left\langle \left(y'_{j}(t)-\beta_{j}(t)\right)\cdot\nabla\alpha_{m,\beta_{j}(t)}^{1}(\cdot-y_{j}(t)),\hm{u}(t)\right\rangle \nonumber \\
			+\left\langle \beta_{j}'(t)\partial_{\beta}\alpha_{m,\beta_{j}(t)}^{1}(\cdot-y_{j}(t)),\hm{u}(t)\right\rangle \label{eq:modeq2}\\
			=\left\langle \alpha_{m,\beta_{j}(t)}^{1}(\cdot-y_{j}(t)),\mathcal{I}(Q)+\mathfrak{I}(Q,u)+\hm{F}(u)\right\rangle \nonumber\\
			+\mathcal{O}\left(\Large(\sum_{i\neq j}|y'_{i}(t)-\beta_{i}(t)|+|\beta'_{i}|(t)\Large)e^{-\kappa/2}\right).\nonumber
		\end{align}
		Note that $\mathfrak{I}$ can be further split as
		\[
		\mathfrak{I}_1(Q^{2},u)=\left(\begin{array}{c}
			0\\
			3\sum_{i\neq k}Q_{\beta_{i}}(\cdot-y_{i}(t))Q_{\beta_{k}}(\cdot-y_{k}(t))u
		\end{array}\right)
		\]
		and
		\[
		\mathfrak{I}_2(Q,u^{2})=\left(\begin{array}{c}
			0\\
			3\sum_{i=1}^{J}Q_{\beta_{i}}(\cdot-y_{i}(t))u^{2}
		\end{array}\right).
		\]
		Therefore, given the separation conditions with $\kappa$ large,  and $\left\Vert u\right\Vert _{\mathcal{H}}\lesssim\eta$
		for $\eta$ sufficiently small, one has
		\begin{align}
			|\beta'_{j}(t)|\lesssim \sum_{m=1}^{3}\left|\left\langle \alpha_{m,\beta_{j}(t)}^{0}(\cdot-y_{j}(t)),\mathcal{I}(Q)+\mathfrak{I}(Q,u)\right\rangle \right|\label{eq:mod1}
		\end{align}
		
		and
		\begin{equation}
			\left|y'_{j}(t)-\beta_{j}(t)\right|\lesssim \sum_{m=1}^{3}\left|\left\langle \alpha_{m,\beta_{j}(t)}^{1}(\cdot-y_{j}(t)),\mathcal{I}(Q)+\mathfrak{I}(Q,u)\right\rangle \right|\label{eq:mod2}
		\end{equation}which imply \eqref{eq:mod1lem} and \eqref{eq:mod2lem}.
	\end{proof}
	

	\section{Stability of multi-solitons}\label{sec:manifold}
	
	In this section, using tools from the previous section, we will illustrate that if a solution stay close to the family of multi-solitons, then actually the solution scatters to the family of the multi-solitons, i.e.,  the  solution converging
	in large time to a superposition of Lorentz-transformed solitons (with slightly modified velocities),
	and a radiation term which is at main order a free wave. In particular, this says that the orbital stability of multi-solitons implies their asymptotic stability.

	\begin{thm}\label{thm:orbitasy3}
		For every $\delta > 0$ there exist $\rho, \eta > 0$ such that
		the following holds. Let the initial parameters $(y_j^{in}, \beta_j^{in})_{j=1}^J$
		satisfy the separation condition in the sense of Definition \ref{def:sep}, and let
		\begin{equation}
			\|\bs \psi_0 - \bs Q( \hm{\beta}^{in}, \hm{y}^{in})\|_{\mathcal{H}} \leq \eta.
		\end{equation}
		Suppose that the solution $\bs \psi$ to \eqref{eq:dynkg} with initial data $\hm{\psi}_0$  stays close in  a neighborhood of multi-soliton family:
		\begin{equation}
			\sup_{t\in\mathbb{R}^{+}}\inf_{\hm{\beta}\in\mathbb{R}^{3N},\hm{y}\in\mathbb{R}^{3N}}\left\Vert \boldsymbol{\psi}(t)-\boldsymbol{Q}(\hm{\beta},\hm{y})\right\Vert _{\mathcal{H}}\lesssim\eta,\label{eq:orbcond1}
		\end{equation}
		then $\bs \psi$ scatters to the multi-soliton family:
		there exist $\beta_{j}\in\mathbb{R}^{3}$,
		functions  $y_{j}(t)\in\mathbb{R}^{3}$ and $\boldsymbol{\psi}_{+}\in \mathcal{H}$ with
		the property that $\dot{y}_{j}(t)\rightarrow\beta_{j}$ and
		\[
		\lim_{t\rightarrow\infty}\text{\ensuremath{\left\Vert \boldsymbol{\psi}(t)-\boldsymbol{Q}(\hm{\beta},\hm{y}(t))-e^{JH_{0}t}\boldsymbol{\psi}_{+}\right\Vert _{\mathcal{H}}}}=0.
		\]
	\end{thm}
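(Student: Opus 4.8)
The plan is to combine the modulation decomposition of Lemma~\ref{lem:mod} with the Strichartz/local-energy-decay estimate of Theorem~\ref{thm:mainthmlinear} in a bootstrap on $[0,\infty)$, and then extract the scattering profile. First I would invoke Lemma~\ref{lem:mod} (after choosing $\rho$ large enough relative to $\delta$ so that the separation condition of Definition~\ref{def:sep} forces $\mathfrak{S}_\kappa$-membership of the trajectories for all $t\ge 0$) to write $\bs\psi(t)=\bs Q(\hm\beta(t),\hm y(t))+\hm u(t)$ with the orthogonality conditions \eqref{eq:modconds}, i.e.\ $\langle\alpha^0_{m,\beta_j(t)}(\cdot-y_j(t)),\hm u(t)\rangle=\langle\alpha^1_{m,\beta_j(t)}(\cdot-y_j(t)),\hm u(t)\rangle=0$. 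With these, $\pi_0(t)\hm u(t)$ is not yet zero because of the $\cY^0,\cY^1$ directions, but the vanishing of the pairings against $\alpha^0,\alpha^1$ is exactly what kills the polynomially growing modes in the estimate; the genuinely unstable/stable directions $\cY^\pm_j$ survive and must be controlled separately. I would therefore also introduce $a^\pm_j(t):=\langle\alpha^\pm_{\beta_j(t)}(\cdot-y_j(t)),\hm u(t)\rangle$ and derive the scalar ODEs $\dot a^\pm_j=\mp\gamma_j\nu\sqrt{1-|\beta_j|^2}\,a^\pm_j+(\text{errors})$ from the equation $\partial_t\hm u=JH(t)\hm u+\mathcal W$ of Lemma~\ref{lem:mod}, where the errors are quadratic in $\hm u$ plus interaction terms $\mathcal I(Q)$ (exponentially small by separation) plus modulation terms $\mathrm{Mod}'$.

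The core of the argument is a continuity/bootstrap estimate. I would set up bootstrap quantities: the Strichartz–local-energy norm $\|\mathcal D u_1\|_{\mathfrak W\cap S}+\|u_2\|_{\mathfrak W\cap S}$ on $[0,T]$, the sup norm $\sup_{[0,T]}\|\hm u(t)\|_{\mathcal H}$, the $L^1_t\cap L^\infty_t$ norms of $\dot{\hm\beta}$ and $\dot{\hm y}-\hm\beta$ (which feed back into the ``assumption on trajectories'' \eqref{eq:maincond} needed to apply Theorem~\ref{thm:mainthmlinear}), and $\sup_{[0,T]}\sum_{j}|a^\pm_j(t)|$. Feeding the nonlinearity $\mathcal W=\mathcal I(Q)+\mathfrak I(Q,u)+\hm F(u)+\mathrm{Mod}'\nabla_M\hm Q$ into \eqref{eq:mainstrichartz}: the interaction $\mathcal I(Q)$ is $O(e^{-c\delta t})$ in $(\mathfrak W\cap S)^*$ by the separation condition hence integrable; the linear-in-$u$ term $\mathfrak I_1(Q^2,u)$ contains the local potential and is absorbed by the $\mathfrak W$-norm with a small constant because $Q^2$ is exponentially localized and $\sigma$ is large; the quadratic term $\mathfrak I_2(Q,u^2)$ and cubic $\hm F(u)=(0,u^3)$ are handled by the Strichartz space $S=L^\infty_tL^2_x\cap L^2_tB^{-5/6}_{6,2}$ via Hölder/Sobolev, gaining a power of the small parameter $\eta$; and the modulation term contributes $\|\mathrm{Mod}'\|_{L^1_t}\|\nabla_M\hm Q\|_{\mathcal H}$, controlled by \eqref{eq:mod1lem}–\eqref{eq:mod2lem} of Lemma~\ref{lem:mod}, i.e.\ by $\int(\|\hm u\|_{\mathcal H}+1)e^{-\kappa/2}\,dt$ — which is \emph{not} integrable as stated, so here I would need to first upgrade to decay of $\|\hm u(t)\|_{\mathcal H}$ before closing. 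The term $\|\mathcal B\|_{L^1_t\cap L^\infty_t}$ in \eqref{eq:mainstrichartz} reduces to $\sum_j\|a^\pm_j\|_{L^1_t\cap L^\infty_t}$ after using the orthogonality \eqref{eq:modconds} to drop the $a^\ell_{m,j}$ part.

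To handle the unstable modes and the non-integrability issue together, I would run a topological/shooting argument à la Nakanishi–Schlag: the orbital-stability hypothesis \eqref{eq:orbcond1} forces $\sup_t|a^+_j(t)|\lesssim\eta$ for the growing modes, and a standard argument on the scalar ODEs then shows $a^+_j(t)\to 0$ exponentially (otherwise $|a^+_j|$ would leave the $O(\eta)$ ball), while $a^-_j$ is controlled by its own decay; consequently $\hm u$ is, up to exponentially small corrections, supported on $X_c$, and one can legitimately apply the scattering conclusion \eqref{eq:mainscatter} of Theorem~\ref{thm:mainthmlinear} to a corrected profile. From the resulting $\|\mathcal D u_1\|_{\mathfrak W\cap S}+\|u_2\|_{\mathfrak W\cap S}\lesssim\eta$, local energy decay gives $\|\hm u(t)\|_{L^2\langle\cdot-y_j(t)\rangle^{-\sigma}}\in L^2_t$, which via the modulation bounds \eqref{eq:mod1lem}–\eqref{eq:mod2lem} yields $\dot{\hm\beta},\dot{\hm y}-\hm\beta\in L^1_t\cap L^2_t$, so $\beta_j(t)\to\beta_j^\infty=:\beta_j$ and $\dot y_j(t)\to\beta_j$; this also retroactively justifies \eqref{eq:maincond} and closes the bootstrap. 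Finally, writing $\bs\psi(t)-\bs Q(\hm\beta,\hm y(t))=[\bs Q(\hm\beta(t),\hm y(t))-\bs Q(\hm\beta,\hm y(t))]+\hm u(t)$, the bracket $\to 0$ in $\mathcal H$ since $\beta_j(t)\to\beta_j$, and $\hm u(t)-e^{JH_0t}\bs\psi_+\to 0$ in $\mathcal H$ by \eqref{eq:mainscatter} applied to the part of $\hm u$ in $X_c$ (the exponentially small unstable/stable corrections vanish in the limit), giving the claim. The main obstacle is precisely the coupling in the previous paragraph: the modulation error $\|\mathrm{Mod}'\|_{L^1_t}$ a priori only obeys a bound with a non-decaying $+1$ inside, so the argument is genuinely circular and must be closed self-consistently — one proves local energy decay $\|\hm u\|_{\mathfrak W}\in L^2_t$ and uses it to make $\int|\dot{\hm\beta}|+|\dot{\hm y}-\hm\beta|\,dt<\infty$ simultaneously, which forces a careful ordering of the bootstrap hypotheses and is the technical heart of the proof.
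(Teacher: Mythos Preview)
Your overall architecture---modulation decomposition via Lemma~\ref{lem:mod}, bootstrap on Strichartz and modulation norms using Theorem~\ref{thm:mainthmlinear}, integral representation of the unstable coefficients $a_j^+$ forced by the global bound $\sup_t\|\hm u(t)\|_{\mathcal H}\lesssim\eta$, and Yang--Feldman scattering---matches the paper's proof. Two points need correction.

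First, the orthogonality conditions \eqref{eq:modconds} \emph{do} give $\pi_0(t)\hm u(t)=0$; the projection $\pi_0$ of Definition~\ref{def:zeroproj} onto $\operatorname{span}\{\cY^0_{j,m},\cY^1_{j,m}\}$ is precisely the one annihilated by vanishing of the $\alpha^0,\alpha^1$ pairings, so there is no residual zero-mode component to worry about.

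Second, and more importantly, your ``main obstacle'' is a misreading. In Lemma~\ref{lem:mod} the parameter $\kappa$ is the instantaneous separation $\min_{j\neq k}|y_j(t)-y_k(t)|$, and the paper's proof shows \emph{as part of the bootstrap} that this grows linearly: from the bootstrap hypothesis $\|\hm\beta'\|_{L^1\cap L^\infty}+\|\hm y'-\hm\beta\|_{L^1\cap L^\infty}\lesssim\eta^2\ll\delta$ one integrates to get $|y_j(t)-y_k(t)|\ge\tfrac12(\delta t+\rho)$. Hence the term you flag as non-integrable is actually $\int_0^\infty(\|\hm u\|_{\mathcal H}+1)e^{-(\delta t+\rho)/4}\,dt\lesssim\tfrac{1}{\delta}e^{-\rho/4}$, which is small when $\rho$ is large. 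Your proposed fix---decay of $\|\hm u(t)\|_{\mathcal H}$---cannot work, since $\hm u$ scatters to a free wave of size $\sim\eta$ and its energy norm does not decay. The correct mechanism is the exponential-in-time decay of all soliton interaction terms $\mathcal I(Q)$, $\mathfrak I_1(Q^2,u)$, and the $e^{-\kappa/2}$ factors in the modulation ODEs, all driven by the linearly growing separation; this is what makes the $\frac{1}{\delta}e^{-\rho/2}$ constants appear throughout the paper's estimates and is what actually closes the loop you identified.
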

	\begin{proof}
		
		From the orbital stability \eqref{thm:orbitasy3}, using the modulation computations, the first part of Lemma \ref{lem:mod}, we can find Lipschitz functions  $\hm{y}(t)$ and $\hm{\beta}(t)$ such that
		\[
		\sup_{t\in\mathbb{R}^{+}}\left\Vert \boldsymbol{\psi}(t)-\boldsymbol{Q}(\hm{y}(t),\hm{\beta}(t))\right\Vert _{\mathcal{H}}\lesssim\eta
		\]
		and setting
		\begin{equation}\label{eq:decomppsi}
			\hm{u}(t):=\boldsymbol{\psi}(t)-\boldsymbol{Q}(\hm{y}(t),\hm{\beta}(t)),
		\end{equation}
		one has for $m=1,2,3$ and $j=1,\ldots,N$
		\begin{equation}\label{eq:orbortho}
			\left\langle \alpha_{m,\beta_{j}(t)}^{1}(\cdot-y_{j}(t)),\hm{u}(t)\right\rangle =\left\langle \alpha_{m,\beta_{j}(t)}^{0}(\cdot-y_{j}(t),\hm{u}(t)\right\rangle =0
		\end{equation}
		and $\sup_{t\in\mathbb{R}^{+}}\left\Vert \boldsymbol{u}(t)\right\Vert _{\mathcal{H}}\lesssim\eta$.
		Next, we will analyze more quantitative estimates for $\large(\hm{u}(t),\hm{\beta}(t),\hm{y}(t)\large).$

		From the decomposition \eqref{eq:decomppsi} and using the equation for $\bs \psi(t)$, we obtain
		\begin{align}\label{eq:uorbit}
			\frac{d}{dt}\hm{u}(t) & =JH(t)\hm{u}(t)+\mathcal{I}(Q)+\mathfrak{I}_{1}(Q^{2},u)+\mathfrak{I}_{2}(Q,u^{2})+\hm{F}(u)\\
			& -\dot{\hm{\beta}}(t)\partial_{\hm{\beta}}\hm{Q}(\hm{\beta}(t),\hm{y}(t))-\left(\dot{\hm{y}}(t)-\hm{\beta}(t)\right)\partial_{\hm{y}}\hm{Q}(\hm{\beta}(t),\hm{y}(t))\nonumber\\
			& =:JH(t)\hm{u}(t)+\mathcal{I}(Q)+\mathfrak{I}(Q,u)+\hm{F}(u)+\text{Mod}'(t)\nabla_{M}\hm{Q}(\hm{\beta}(t),\hm{y}(t))\nonumber\\
			& =:JH(t)\hm{u}(t)+\mathcal{W}(x,t).\nonumber
		\end{align}
		From the notations from Definition \ref{def:sep-1} associated to the
		path $(\hm{\beta}(t),\hm{y}(t))$,  one can decompose $\hm{u}(t)$  as
		\[
		\hm{u}(t)=\pi_{c}(t)\hm{u}(t)+\text{\ensuremath{\pi_{+}}}(t)\hm{u}(t)+\pi_-(t)\hm{u}(t)
		\] where we used the fact \eqref{eq:orbortho}, $\pi_0(t)\hm{u}(t)=0,\,\forall t.$
		
		Denote \begin{equation}
			a_{j}^{\pm}(t)=\left\langle \alpha_{\beta_{j}(t)}^{\pm}(\cdot-y_{j}(t),\hm{u}(t)\right\rangle.
		\end{equation}
		
		We first compute $a_{j}^{+}(t)$. Taking the time derivative of $a_{j}^{+}(t)$, following similar computations in the proof of Lemma \ref{lem:mod} for the time derivatives of zero modes, we get
		\begin{align*}
			\frac{d}{dt}a_{j}^{+}(t)& =	\frac{d}{dt}\left\langle \alpha_{\beta_{j}(t)}^{+}(\cdot-y_{j}(t)),\hm{u}(t)\right\rangle  \\
			& =\left\langle \frac{d}{dt}\left(\alpha_{\beta_{j}(t)}^{+}(\cdot-y_{j}(t))\right),\hm{u}(t)\right\rangle +\left\langle \alpha_{\beta_{j}(t)}^{+}(\cdot-y_{j}(t)),\frac{d}{dt}\hm{u}(t)\right\rangle .
		\end{align*}
		Using the equation for $\hm{u}(t)$, we expand
		{\small \begin{align*}
				\left\langle \alpha_{\beta_{j}(t)}^{+}(\cdot-y_{j}(t)),\frac{d}{dt}\hm{u}(t)\right\rangle & =\left\langle \alpha_{\beta_{j}(t)}^{+}(\cdot-y_{j}(t))(\cdot-y_{j}(t)),JH(t)\hm{u}(t)\right\rangle \\
				& +\left\langle \alpha_{\beta_{j}(t)}^{+}(\cdot-y_{j}(t))(\cdot-y_{j}(t)),\mathcal{I}(Q)+\mathfrak{I}(Q,u)+\hm{F}(u)\right\rangle \\
				& \left\langle \alpha_{\beta_{j}(t)}^{+}(\cdot-y_{j}(t))(\cdot-y_{j}(t)),\text{Mod}'(t)\nabla_{M}\hm{Q}(\hm{\beta}(t),\hm{y}(t))\right\rangle \\
				& =\left\langle \alpha_{\beta_{j}(t)}^{+}(\cdot-y_{j}(t))(\cdot-y_{j}(t)),JH_{j}(t)\hm{u}(t)\right\rangle \\
				& +\sum_{i\neq j}\left\langle \alpha_{\beta_{j}(t)}^{+}(\cdot-y_{j}(t)),\mathcal{V}_{i}(t)\hm{u}(t)\right\rangle \\
				& +\left\langle \alpha_{\beta_{j}(t)}^{+}(\cdot-y_{j}(t)),\mathcal{I}(Q)+\mathfrak{I}(Q,u)+\hm{F}(u)\right\rangle \\
				& +\left\langle\alpha_{\beta_{j}(t)}^{+}(\cdot-y_{j}(t)),\text{Mod}'(t)\nabla_{M}\hm{Q}(\hm{\beta}(t),\hm{y}(t))\right\rangle
		\end{align*}}
		where	we used notations from \eqref{eq:notationstwo}.
		
		Next we note that\begin{align*}
			\left\langle \frac{d}{dt}\left(\alpha_{\beta_{j}(t)}^{+}(\cdot-y_{j}(t))\right),\hm{u}(t)\right\rangle +\left\langle \alpha_{\beta_{j}(t)}^{+}(\cdot-y_{j}(t)),JH_{j}(t)\hm{u}(t)\right\rangle \\
			=-\left\langle \left(y'_{j}(t)-\beta_{j}(t)\right)\cdot\nabla\alpha_{\beta_{j}(t)}^{+}(\cdot-y_{j}(t)),\hm{u}(t)\right\rangle +\left\langle \beta_{j}'(t)\partial_{\beta}\alpha_{\beta_{j}(t)}^{+}(\cdot-y_{j}(t))\right\rangle \\
			+\frac{\nu}{\gamma_{j}(t)}\left\langle \alpha_{\beta_{j}(t)}^{+}(\cdot-y_{j}(t)),\hm{u}(t)\right\rangle.
		\end{align*}
		Finally integrating the resulting ODE for  $a_{j}^{+}(t)$, one has 
		\begin{align*}
			a_{j}^{+}(t) & =a_{j}^{+}(0)\exp\left(\int_{0}^{t}\frac{\nu}{\gamma_{j}(\tau)}\,d\tau\right)+\int_{0}^{t}\exp\left(\int_{s}^{t}\frac{\nu}{\gamma_{j}(\tau)}\,d\tau\right)\left\langle \alpha_{\beta_{j}(s)}^{+}(\cdot-y_{j}(s),\mathcal{W}(\cdot,s)\right\rangle \,ds\\
			& +\sum_{k\neq j}\int_{0}^{t}\exp\left(\int_{s}^{t}\frac{\nu}{\gamma_{j}(\tau)}\,d\tau\right)\left\langle \alpha_{\beta_{j}(s)}^{+}\big(\cdot-y_{j}(s)\big),\mathcal{V}_{\beta_{k}(s)}\big(\cdot-y_{k}(s)\big)\hm{u}(s)\right\rangle \,ds\\
			& \text{+\ensuremath{\int_{0}^{t}}}\exp\left(\int_{s}^{t}\frac{\nu}{\gamma_{j}(\tau)}\,d\tau\right)\left\langle \left(y'_{j}(s)-\beta_{j}(s)\right)\cdot\nabla\alpha_{\beta_{j}(s)}^{\text{+}}(\cdot-y_{j}(s)),\hm{u}(s)\right\rangle \,ds\\
			& \text{+\ensuremath{\int_{0}^{t}}}\exp\left(\int_{s}^{t}\frac{\nu}{\gamma_{j}(\tau)}\,d\tau\right)\left\langle \beta_{j}'(s)\partial_{\beta}\alpha_{\beta_{j}(s)}^{+}(\cdot-y_{j}(s)),\hm{u}(s)\right\rangle \,ds.
		\end{align*}	
		Since $\sup_{t\in\mathbb{R}^+}\left\Vert \hm{u}(t)\right\Vert _{\mathcal{H}}\lesssim\eta$, it gives the unique choice for the initial data for $a_j^{+}(0)$, see for example Beceanu \cite{Bec2}, Krieger-Schlag \cite{KS2}, Schlag \cite{Sch1}, and leads to the following equation:
		\begin{align}
			a_{j}^{+}(t) & =-\int_{t}^{\infty}\exp\left(\int_{s}^{t}\frac{\nu}{\gamma_{j}(\tau)}\,d\tau\right)\left\langle \alpha_{\beta_{j}(s)}^{+}(\cdot-y_{j}(s),\mathcal{W}(\cdot,s)\right\rangle \,ds\label{eq:aorbit}\\
			& -\sum_{k\neq j}\int_{t}^{\infty}\exp\left(\int_{s}^{t}\frac{\nu}{\gamma_{j}(\tau)}\,d\tau\right)\left\langle \alpha_{\beta_{j}(s)}^{+}\big(\cdot-y_{j}(s)\big),\mathcal{V}_{\beta_{k}(s)}\big(\cdot-y_{k}(s)\big)\hm{u}(s)\right\rangle \,ds\nonumber\\
			& -\int_{t}^{\infty}\exp\left(\int_{s}^{t}\frac{\nu}{\gamma_{j}(\tau)}\,d\tau\right)\left\langle \left(y'_{j}(s)-\beta_{j}(s)\right)\cdot\nabla\alpha_{\beta_{j}(s)}^{\text{+}}(\cdot-y_{j}(s)),\hm{u}(s)\right\rangle \,ds\nonumber\\
			& -\int_{t}^{\infty}\exp\left(\int_{s}^{t}\frac{\nu}{\gamma_{j}(\tau)}\,d\tau\right)\left\langle \beta_{j}'(s)\partial_{\beta}\alpha_{\beta_{j}(s)}^{+}(\cdot-y_{j}(s)),\hm{u}(s)\right\rangle \,ds.\nonumber
		\end{align}
		By similar computations, for the stable modes, we have
		{\small\begin{align}
				a_{j}^{-}(t) & =a_{j}^{-}(0)\exp\left(-\int_{0}^{t}\frac{\nu}{\gamma_{j}(\tau)}\,d\tau\right)-\int_{0}^{t}\exp\left(-\int_{s}^{t}\frac{\nu}{\gamma_{j}(\tau)}\,d\tau\right)\left\langle \alpha_{\beta_{j}}^{-}(\cdot-\beta_{j}s-x_{j}),\mathcal{W}(\cdot,s)\right\rangle \,ds\label{eq:stable}\\
				& +\sum_{k\neq j}\int_{0}^{t}\exp\left(-\int_{s}^{t}\frac{\nu}{\gamma_{j}(\tau)}\,d\tau\right)\left\langle \alpha_{\beta_{j}}^{-}(\cdot-\beta_{j}s-x_{j}),\mathcal{V}_{\beta_{k}}\big(\cdot-\beta_{k}s-x_{k}\big)\hm{u}(s)\right\rangle \,ds\nonumber \\
				& +\int_{0}^{t}\exp\left(-\int_{s}^{t}\frac{\nu}{\gamma_{j}(\tau)}\,d\tau\right)\left\langle \left(y'_{j}(s)-\beta_{j}(s)\right)\cdot\nabla\alpha_{\beta_{j}(s)}^{-}(\cdot-y_{j}(s)),\hm{u}(s)\right\rangle \,ds\nonumber \\
				& +\int_{0}^{t}\exp\left(-\int_{s}^{t}\frac{\nu}{\gamma_{j}(\tau)}\,d\tau\right)\left\langle \beta_{j}'(s)\partial_{\beta}\alpha_{\beta_{j}(s)}^{-}(\cdot-y_{j}(s)),\hm{u}(s)\right\rangle \,ds.\nonumber 
		\end{align}}
		We also recall  notations from Theorem \ref{thm:mainthmlinear}: $\mathcal{B}(t):=\sum_{j=1}^N	\left( \left | 	a_j^{+}(t) \right |+\left | 	a_j^{-}(t) \right|  \right)$.

		\noindent{\bf Bootstrap:}
		We next derive refined quantitative estimates for  $\large(\hm{u}(t),\hm{\beta}(t),\hm{y}(t)\large)$ via a bootstrap argument.
		We claim that assuming for some $T>0$, and some constant $C_T$, we have
		\begin{align}\label{eq:bootassume}
			\left\Vert \hm{u}\right\Vert _{\mathcal{S}_{\mathcal{H}} [0,T]}^2+ 	\left\Vert |\hm{y}'(\cdot)-\hm{\beta}(\cdot)|+|\hm{\beta}'(\cdot)|\right\Vert _{L_{t}^{\infty} \bigcap L^1_t [0,T]}& \leq C_T \eta^2
		\end{align}
		then  one can improve estimates above to
		\begin{align}\label{eq:bootrecover}
			\left\Vert \hm{u}\right\Vert _{\mathcal{S}_{\mathcal{H}} [0,T]}^2+ 	\left\Vert |\hm{y}'(\cdot)-\hm{\beta}(\cdot)|+|\hm{\beta}'(\cdot)|\right\Vert _{L_{t}^{\infty} \bigcap L^1_t [0,T]}& \leq C\eta^2 + \frac{1}{2} C_T \eta^2
		\end{align} for some prescribed constant $C$.
		Notice that the assumption \eqref{eq:bootassume} is valid since we already know that from the construction that $\sup_{t\in\mathbb{R}^+}\left\Vert \hm{u}(t)\right\Vert _{\mathcal{H}}\lesssim\eta$ and the estimates for $\hm{\beta}(t),\hm{y}(t)$ can be obtained via \eqref{eq:mod1lem} and \eqref{eq:mod2lem} in Lemma \ref{lem:mod}. Indeed, 	using modulation equations again, Lemma \ref{lem:mod}, we also have
		\begin{equation}
			\left\Vert |\hm{y}'(\cdot)-\hm{\beta}(\cdot)|+|\hm{\beta}'(\cdot)|\right\Vert _{L_{t}^{\infty} } \lesssim \eta^2.
		\end{equation}	
		Now we prove \eqref{eq:bootrecover}.
		First of all,  from the bootstrap assumption, due to the separation properties of trajectories, the norms for $(\hm{\beta}(t),\hm{y}(t))$  above, we note that
		\begin{align}
			|y_j(t)-y_i(t)|&\geq \left|\int_0^t( y'_j(s)-y'_i(s))\,ds\right|+\rho\\&\geq \rho+\left|\int_0^t\left((\beta_{j,0}^{in}-\beta_{i,0}^{in})+ (\beta'_j(s)-\beta_{j,0}^{in})-(\beta'_i(s)-\beta_{i,0}^{in})\right)\,ds\right|\nonumber
			\\
			&- \left|\int_0^t\left( (y'_j(s)-\beta_j(s))-(y'_i(s)-\beta_i(s)\right)\,ds\right|\nonumber\\
			&\geq \rho+ \left|\int_0^t (\delta -2\eta^2 )\,ds\right|-2\eta^2\ \geq \frac{\delta t+ \rho}{2}\nonumber
		\end{align}provided that we picked $\eta\ll \delta$.

		Given  orthogonality conditions \eqref{eq:orbortho} above, we can apply Strichartz estimates in Theorem \ref{thm:mainthmlinear} to estimate $\pi_{c}\hm{u}(t)$ and  conclude that
		{\small	\begin{align}
				\left\Vert  \hm{u }\right\Vert _{\mathcal{S}_{\mathcal{H}} [0,T]} & \lesssim \eta +\left \Vert\mathcal{I}(Q)+\mathfrak{I}(Q,u)+\hm{F}(u)+\text{Mod}'(t)\nabla_{M}\hm{Q}(\hm{\beta}(t),\hm{y}(t))\right\Vert_{\mathcal{S}^*_{\mathcal{H}} [0,T]}+\left\Vert 	\mathcal{B} \right\Vert_{L^1_t\bigcap L^\infty_t[0,T]}\nonumber\\
				&\lesssim \eta + \frac{1}{\delta} e^{-\frac{\rho}{2}}+ \frac{1}{\delta} e^{-\frac{\rho}{2}}C_T\eta + (C_T\eta )^2+(C_T\eta )^3+\left\Vert 	\mathcal{B} \right\Vert_{L^1_t\bigcap L^\infty_t[0,T]}\label{eq:recover1}.
		\end{align}}
		Next, we estimate $\mathcal{B}(t)$.   We start with $a_j^-(t)$.  From the explicit formula \eqref{eq:stable},  due to the exponential decay of $\exp\left(\int_{s}^{t}-\frac{\nu}{\gamma_{j}(\tau)}\,d\tau\right)$
		by Young's inequality,  it suffices to bound the $L_{t}^{2}[0,T]$ and
		$L_{t}^{\infty}[0,T]$ norms of 
		\begin{align*}
			\left\langle \alpha_{\beta_{j}(s)}^{-}(\cdot-y_{j}(s)),\mathcal{W}(\cdot,s)\right\rangle +\sum_{k\neq j}\left\langle \alpha_{\beta_{j}(s)}^{-}\big(\cdot-y_{j}(s)\big),\mathcal{V}_{\beta_{k}(s)}\big(\cdot-y_{k}(s)\big)\hm{u}(s)\right\rangle \\
			\left\langle \left(y'_{j}(s)-\beta_{j}(s)\right)\cdot\nabla\alpha_{\beta_{j}(s)}^{\text{-}}(\cdot-y_{j}(s)),\hm{u}(s)\right\rangle +\left\langle \beta_{j}'(s)\partial_{\beta}\alpha_{\beta_{j}(s)}^{-}(\cdot-y_{j}(s)),\hm{u}(s)\right\rangle .
		\end{align*}
		Clearly, due to the separation of trajectories and the exponential
		decay of eigenfunctions, one has for $k\neq j$
		\[
		\left|\left\langle \alpha_{\beta_{j}(s)}^{-}(\cdot-y_{j}(s)),\mathcal{V}_{\beta_{k}(s)}(\cdot-y_{k}(s))\right\rangle \right|\lesssim e^{-(\frac{\delta s+ \rho}{2})}.
		\]
		Since eigenfunctions are smooth, we also have
		\begin{align*}
			\left|\left\langle \left(y'_{j}(s)-\beta_{j}(s)\right)\cdot\nabla\alpha_{\beta_{j}(s)}^{-}(\cdot-y_{j}(s)),\hm{u}(s)\right\rangle \right|\\
			+\left|\left\langle \beta_{j}'(s)\partial_{\beta}\alpha_{\beta_{j}(s)}^{-}(\cdot-y_{j}(s)),\hm{u}(s)\right\rangle \right|\\
			\lesssim\left(|\hm{y}'(s)-\hm{\beta}(s)|+|\hm{\beta}'(s)|\right)\left\Vert \hm{u}(s)\right\Vert _{\mathcal{H}}.
		\end{align*}
		One also can compute that
		\begin{align*}
			\left|\left\langle \alpha_{\beta_{j}(s)}^{-}(\cdot-y_{j}(s),\mathcal{W}(\cdot,s)\right\rangle \right| & \lesssim e^{-(\frac{\delta s+ \rho}{2})}+|\hm{y}'(s)-\hm{\beta}(s)|+|\hm{\beta}'(s)|\\
			& +\text{\ensuremath{\left(\left\Vert \hm{u}(s)\right\Vert _{\mathcal{H}}^{2}+\left\Vert \hm{u}(s)\right\Vert _{\mathcal{H}}\right)}}e^{-(\frac{\delta s+ \rho}{2})}\\
			&+	\left|\left\langle \alpha_{\beta_{j}(s)}^{-}(\cdot-y_{j}(s),\hm{F}(\hm{u)}\right\rangle \right|
		\end{align*}
		Therefore, we can conclude that given the separation condition, one
		has 
		\begin{align}
			\sum_{j=1}^N \left\Vert a_{j}^-\right\Vert_{L^\infty_t[0,T]\bigcap L^1_t[0,T]}& \lesssim\eta+ \frac{1}{\delta} e^{-(\frac{ \rho}{2})}+ \left\Vert |\hm{y}'(s)-\hm{\beta}(s)|+|\hm{\beta}'(s)|\right\Vert _{L_{t}^{\infty}\bigcap L^1_t[0,T]}\label{eq:aj-}\\
			& +\frac{1}{\delta} e^{-(\frac{ \rho}{2})}\sup_{s\in[0,T]}\left(\left\Vert \hm{u}(s)\right\Vert _{\mathcal{H}}^{2}+\left\Vert \hm{u}(s)\right\Vert _{\mathcal{H}}\right)+\left\Vert \hm{u}\right\Vert _{\mathcal{S}_{\mathcal{H}}[0,T]}^3.\nonumber\\
			& \lesssim  \eta+\frac{1}{\delta} e^{-\frac{\rho}{2}}+ \frac{1}{\delta} e^{-\frac{\rho}{2}}C_T\eta + (C_T\eta )^2+(C_T\eta )^3.\nonumber
		\end{align}
		Next to estimate $a_j^+(t)$, we split
		\begin{equation}
			a_j^+(t)=a_{j,0}^+(t)+a_{j,1}^+(t)
		\end{equation}
		where
		\begin{align}
			a_{j,0}^{+}(t) & =-\int_{t}^{T}\exp\left(\int_{s}^{t}\frac{\nu}{\gamma_{j}(\tau)}\,d\tau\right)\left\langle \alpha_{\beta_{j}(s)}^{+}(\cdot-y_{j}(s),\mathcal{W}(\cdot,s)\right\rangle \,ds\nonumber\\
			& -\sum_{k\neq j}\int_{t}^{T}\exp\left(\int_{s}^{t}\frac{\nu}{\gamma_{j}(\tau)}\,d\tau\right)\left\langle \alpha_{\beta_{j}(s)}^{+}\big(\cdot-y_{j}(s)\big),\mathcal{V}_{\beta_{k}(s)}\big(\cdot-y_{k}(s)\big)\hm{u}(s)\right\rangle \,ds\nonumber\\
			& -\int_{t}^{T}\exp\left(\int_{s}^{t}\frac{\nu}{\gamma_{j}(\tau)}\,d\tau\right)\left\langle \left(y'_{j}(s)-\beta_{j}(s)\right)\cdot\nabla\alpha_{\beta_{j}(s)}^{\text{+}}(\cdot-y_{j}(s)),\hm{u}(s)\right\rangle \,ds\nonumber\\
			& -\int_{t}^{T}\exp\left(\int_{s}^{t}\frac{\nu}{\gamma_{j}(\tau)}\,d\tau\right)\left\langle \beta_{j}'(s)\partial_{\beta}\alpha_{\beta_{j}(s)}^{+}(\cdot-y_{j}(s)),\hm{u}(s)\right\rangle \,ds\nonumber
		\end{align}
		and
		\begin{align}
			a_{j,1}^{+}(t) & =-\int_{T}^{\infty}\exp\left(\int_{s}^{t}\frac{\nu}{\gamma_{j}(\tau)}\,d\tau\right)\left\langle \alpha_{\beta_{j}(s)}^{+}(\cdot-y_{j}(s),\mathcal{W}(\cdot,s)\right\rangle \,ds\nonumber\\
			& -\sum_{k\neq j}\int_{T}^{\infty}\exp\left(\int_{s}^{t}\frac{\nu}{\gamma_{j}(\tau)}\,d\tau\right)\left\langle \alpha_{\beta_{j}(s)}^{+}\big(\cdot-y_{j}(s)\big),\mathcal{V}_{\beta_{k}(s)}\big(\cdot-y_{k}(s)\big)\hm{u}(s)\right\rangle \,ds\nonumber\\
			& -\int_{T}^{\infty}\exp\left(\int_{s}^{t}\frac{\nu}{\gamma_{j}(\tau)}\,d\tau\right)\left\langle \left(y'_{j}(s)-\beta_{j}(s)\right)\cdot\nabla\alpha_{\beta_{j}(s)}^{\text{+}}(\cdot-y_{j}(s)),\hm{u}(s)\right\rangle \,ds\nonumber\\
			& -\int_{T}^{\infty}\exp\left(\int_{s}^{t}\frac{\nu}{\gamma_{j}(\tau)}\,d\tau\right)\left\langle \beta_{j}'(s)\partial_{\beta}\alpha_{\beta_{j}(s)}^{+}(\cdot-y_{j}(s)),\hm{u}(s)\right\rangle \,ds.\nonumber
		\end{align}
		For $a_{j,0}^+(t)$, we can use the bootstrap assumption and Young's inequality in a similar manner to the analysis of  $a_j^-$. Due to the exponential decay of $\exp\left(\int_{s}^{t}\frac{\nu}{\gamma_{j}(\tau)}\,d\tau\right)$
		by Young's inequality,  it suffices to bound the $L_{t}^{2}[0,T]$ and
		$L_{t}^{\infty}[0,T]$ norms of 
		\begin{align*}
			\left\langle \alpha_{\beta_{j}(s)}^{+}(\cdot-y_{j}(s)),\mathcal{W}(\cdot,s)\right\rangle +\sum_{k\neq j}\left\langle \alpha_{\beta_{j}(s)}^{+}\big(\cdot-y_{j}(s)\big),\mathcal{V}_{\beta_{k}(s)}\big(\cdot-y_{k}(s)\big)\hm{u}(s)\right\rangle \\
			\left\langle \left(y'_{j}(s)-\beta_{j}(s)\right)\cdot\nabla\alpha_{\beta_{j}(s)}^{\text{+}}(\cdot-y_{j}(s)),\hm{u}(s)\right\rangle +\left\langle \beta_{j}'(s)\partial_{\beta}\alpha_{\beta_{j}(s)}^{+}(\cdot-y_{j}(s)),\hm{u}(s)\right\rangle .
		\end{align*}
		Due to the separation of trajectories and the exponential
		decay of eigenfunctions, one has for $k\neq j$
		\[
		\left|\left\langle \alpha_{\beta_{j}(s)}^{+}(\cdot-y_{j}(s)),\mathcal{V}_{\beta_{k}(s)}(\cdot-y_{k}(s))\right\rangle \right|\lesssim e^{-(\frac{\delta s+ \rho}{2})}.
		\]
		Since eigenfunctions are smooth, we also have
		\begin{align*}
			\left|\left\langle \left(y'_{j}(s)-\beta_{j}(s)\right)\cdot\nabla\alpha_{\beta_{j}(s)}^{\text{+}}(\cdot-y_{j}(s)),\hm{u}(s)\right\rangle \right|\\
			+\left|\left\langle \beta_{j}'(s)\partial_{\beta}\alpha_{\beta_{j}(s)}^{+}(\cdot-y_{j}(s)),\hm{u}(s)\right\rangle \right|\\
			\lesssim\left(|\hm{y}'(s)-\hm{\beta}(s)|+|\hm{\beta}'(s)|\right)\left\Vert \hm{u}(s)\right\Vert _{\mathcal{H}}.
		\end{align*}
		Finally, one can compute that
		\begin{align*}
			\left|\left\langle \alpha_{\beta_{j}(s)}^{+}(\cdot-y_{j}(s),\mathcal{W}(\cdot,s)\right\rangle \right| & \lesssim e^{-(\frac{\delta s+ \rho}{2})}+|\hm{y}'(s)-\hm{\beta}(s)|+|\hm{\beta}'(s)|\\
			& +\text{\ensuremath{\left(\left\Vert \hm{u}(s)\right\Vert _{\mathcal{H}}^{2}+\left\Vert \hm{u}(s)\right\Vert _{\mathcal{H}}\right)}}e^{-(\frac{\delta s+ \rho}{2})}\\
			&+	\left|\left\langle \alpha_{\beta_{j}(s)}^{+}(\cdot-y_{j}(s),\hm{F}(\hm{u)}\right\rangle \right|
		\end{align*}
		Therefore, we can conclude that given the separation condition, one
		has 
		\begin{align}
			\sum_{j=1}^N \left\Vert a_{j,0}^+\right\Vert_{L^\infty_t[0,T]\bigcap L^1_t[0,T]}& \lesssim\frac{1}{\delta} e^{-(\frac{ \rho}{2})}+ \left\Vert |\hm{y}'(s)-\hm{\beta}(s)|+|\hm{\beta}'(s)|\right\Vert _{L_{t}^{\infty}\bigcap L^1_t[0,T]}\label{eq:aj0}\\
			& +\frac{1}{\delta} e^{-(\frac{ \rho}{2})}\sup_{s\in[0,T]}\left(\left\Vert \hm{u}(s)\right\Vert _{\mathcal{H}}^{2}+\left\Vert \hm{u}(s)\right\Vert _{\mathcal{H}}\right)+\left\Vert \hm{u}\right\Vert _{\mathcal{S}_{\mathcal{H}}[0,T]}^3.\nonumber\\
			& \lesssim  \frac{1}{\delta} e^{-\frac{\rho}{2}}+ \frac{1}{\delta} e^{-\frac{\rho}{2}}C_T\eta + (C_T\eta )^2+(C_T\eta )^3.
		\end{align}
		For $a_{j,1}^+(t)$, we first note that by direct computations
		\begin{equation}\label{eq:aj1}
			\left\Vert \int_{T}^{\infty}\exp\left(\int_{s}^{t}\frac{\nu}{\gamma_{j}(\tau)}\,d\tau\right)\,ds\right\Vert_{L^\infty_t[0,T]\bigcap L^1_t[0,T]} \lesssim 1
		\end{equation} and then using $\sup_{t\in\mathbb{R}^+}\left\Vert \hm{u}(t)\right\Vert _{\mathcal{H}}\lesssim\eta$ and $
		\left\Vert |\hm{y}'(\cdot)-\hm{\beta}(\cdot)|+|\hm{\beta}'(\cdot)|\right\Vert _{L_{t}^{\infty} } \lesssim \eta^2$, one has 
		\begin{equation}
			\sum_{j=1}^N \left\Vert a_{j,1}^+\right\Vert_{L^\infty_t[0,T]\bigcap L^1_t[0,T]}\lesssim  \frac{1}{\delta} e^{-\frac{\rho}{2}}+ \frac{1}{\delta} e^{-\frac{\rho}{2}} \eta + \eta ^2 +\eta^3.
		\end{equation} 
		Therefore, taking $\eta$ large,  
		putting \eqref{eq:recover1}, \eqref{eq:aj-}, \eqref{eq:aj0} and \eqref{eq:aj1} together, we recast the bootstrap assumption for $\hm{u}(t)$.   
		
		Finally, for modulation parameters, 
		we differentiate the orthogonality conditions \eqref{eq:orbortho}. From the second part of Lemma \ref{lem:mod}, we have
		\begin{equation}
			|\dot{\beta}_{j}(t)|\lesssim \left(\left\Vert \hm{u}\right\Vert _{\mathcal{H}}+1\right)e^{-(\frac{\delta t+ \rho}{2})}+\sum_{m=1}^{3}\left|\left\langle \alpha_{m,\beta_{j}(t)}^{0}(\cdot-y_{j}(t)),\hm {q}(t)\right\rangle \right|
		\end{equation}
		and
		\begin{equation}
			\left|y'_{j}(t)-\beta_{j}(t)\right|\lesssim\left(\left\Vert \hm{u}\right\Vert _{\mathcal{H}}+1\right)e^{-(\frac{\delta t+ \rho}{2})}+\sum_{m=1}^{3}\left|\left\langle \alpha_{m,\beta_{j}(t)}^{1}(\cdot-y_{j}(t)),\hm {q}(t)\right\rangle \right|
		\end{equation}
		where $\hm {q}(t)=\Large(0,3\sum_{i=1}^{N}Q_{\beta_{i}}(\cdot-y_{i}(t))u^{2}\Large)^T. $
		In the computations above, we also used for $\iota=0,1 $
		\begin{equation}
			\left|\left\langle \alpha_{m,\beta_{j}(t)}^{\iota}(\cdot-y_{j}(t)),\mathcal{V}_{i}(t)\hm{u}(t)\right\rangle \right|\lesssim \exp\left\{-\frac{|y_j(t)-y_i(t)|}{2}\right\}\left\Vert \hm{u}\right\Vert _{\mathcal{H}}
		\end{equation} due to the exponential decay of eigenfunctions and potentials. 	 We also used similar estimates for
		\begin{equation}
			\left|\left\langle \alpha_{m,\beta_{j}(t)}^{\iota}(\cdot-y_{j}(t)),\mathfrak{I}_1(Q,u)\right\rangle \right|\lesssim e^{-(\frac{\delta t+ \rho}{2})}\left\Vert \hm{u}\right\Vert _{\mathcal{H}}
		\end{equation}
		and 
		\begin{equation}
			\left|\left\langle \alpha_{m,\beta_{j}(t)}^{\iota}(\cdot-y_{j}(t)),	\mathcal{I}(Q)\right\rangle \right|\lesssim e^{-(\frac{\delta t+ \rho}{2})}
		\end{equation}
		where we recall that
		\begin{equation}
			\mathcal{I}(Q)=\left(\begin{array}{c}
				0\\
				\ensuremath{\left(\sum_{j=1}^{N}\sigma_{j}Q_{\beta_{j}(t)}(\cdot-y_{j}(t))\right)^{3}-\sum_{j=1}^{N}\left(\sigma_{j}Q_{\beta_{j}(t)}(\cdot-y_{j}(t))\right)^{3}}
			\end{array}\right)
		\end{equation}
		\begin{equation}
			\mathfrak{I}_1(Q,u)=\left(\begin{array}{c}
				0\\
				3\sum_{i\neq k}\sigma_i\sigma_k Q_{\beta_{i}}(\cdot-y_{i}(t))Q_{\beta_{k}}(\cdot-y_{k}(t))u
			\end{array}\right).
		\end{equation}
		Next, by direct estimates and integration in $t$, we conclude that
		\begin{equation}
			\left\Vert \hm{\beta}'\right\Vert _{L_{t}^{1}\bigcap L_{t}^{\infty}[0,T]}+\left\Vert \hm{y}'-\hm{\beta}\right\Vert _{L_{t}^{1}\bigcap L_{t}^{\infty}[0,T]}\lesssim \frac{1}{\delta} e^{-(\frac{ \rho}{2})}\left\Vert \hm{u}\right\Vert _{\mathcal{S}_{\mathcal{H}}[0,T]}+\frac{1}{\delta} e^{-(\frac{ \rho}{2})}+\left\Vert \hm{u}\right\Vert _{\mathcal{S}_{\mathcal{H}}[0,T]}^2.\label{eq:tildemod}
		\end{equation}	
		Taking $\rho$ large, since $\left\Vert \hm{u}\right\Vert _{\mathcal{S}_{\mathcal{H}}[0,T]}$ recovers the bootstrap assumptions, whence, modulation parameters also recover the bootstrap assumptions. 
		
		After recasting the bootstrap assumptions for all pieces, we then conclude that \eqref{eq:bootassume} holds with some constant independent of $T$. So we can pass $T$ to $\infty$ and obtain
		\begin{equation}\label{eq:conclude}
			\sqrt{\left\Vert \hm{\beta}'\right\Vert _{L_{t}^{1}\bigcap L_{t}^{\infty}}}+\sqrt{\left\Vert \hm{y}'-\hm{\beta}\right\Vert _{L_{t}^{1}\bigcap L_{t}^{\infty}}}+\left\Vert \hm{u}\right\Vert _{\mathcal{S}_{\mathcal{H}}}\lesssim\eta.
		\end{equation}
		
		
		\noindent{\bf Scattering}: 	
		From \eqref{eq:conclude}
		it follows that there exist $\beta_{j}\in\mathbb{R}^{3}$, 
		with the property that $\dot{y}_{j}(t)\rightarrow\beta_{j}$  $\beta_j(t)\rightarrow\beta_j. $

		The scattering of $\hm{u}$ is achieved by the standard argument via
		the Yang-Feldman equation. We expand $\hm{u}(t)$ using Duhamel formula
		with respect to the free evolution:
		\begin{align*}
			\hm{u}(t) & =e^{JH_{0}t}\hm{u}(0)+\int_{0}^{t}e^{JH_{0}(t-s)}\mathcal{V}(s)\hm{u}(s)\,ds\\
			& +\int_{0}^{t}e^{JH_{0}(t-s)}\left(\mathcal{I}(Q)+\mathfrak{I}(Q,u(s))+\hm{F}(u(s))\right)\,ds\\
			& +\int_{0}^{t}e^{JH_{0}(t-s)}\text{Mod}'(s)\nabla_{M}\hm{Q}(\hm{y}(s),\hm{\beta}(s))\,ds.
		\end{align*}
		Then we set $\hm{\psi}_{+}$ as
		\begin{align*}
			\hm{\psi}_{+} & :=\hm{u}(0)+\int_{0}^{\infty}e^{-sJH_{0}}\mathcal{V}(s)\hm{u}(s)\,ds\\
			& +\int_{0}^{\infty}e^{-sJH_{0}}\left(\mathcal{I}(Q)+\mathfrak{I}(Q,u(s))+\hm{F}(u(s))\right)\,ds\\
			& +\int_{0}^{\infty}e^{-sJH_{0}}\text{Mod}'(s)\nabla_{M}\hm{Q}(\hm{y}(s),\hm{\beta}(s))\,ds.
		\end{align*}
		Next we show $\hm{\psi}_{+}$ in $\mathcal{H}$. We first apply the
		endpoint Strichartz estimates for the free evolution, see \cite{BC,DaF,IMN},
		\begin{align*}
			\left\Vert \int_{0}^{\infty}e^{-sJH_{0}}\mathcal{V}(s)\hm{u}(s)\,ds\right\Vert _{\mathcal{H}} & \lesssim\left\Vert \mathcal{V}(s)\hm{u}(s)\right\Vert _{S_{2}^{*}}\lesssim\left\Vert \hm{u}(s)\right\Vert _{S_{2}}  \lesssim\left\Vert \hm{u}\right\Vert _{\mathcal{S}_{\mathcal{H}}}
		\end{align*}
		where $\mathcal{S}_{2}$ and $\mathcal{S}_{2}^{*}$ denote the endpoint
		Strichartz estimates. Using Strichartz estimates again, we get 
		\begin{align*}
			\left\Vert \int_{0}^{\infty}e^{-sJH_{0}}\left(\mathcal{I}(Q)+\mathfrak{I}(Q,u(s))+\hm{F}(u(s))\right)\,ds\right\Vert _{\mathcal{H}} & \lesssim\left\Vert \mathcal{I}(Q)+\mathfrak{I}(Q,u(s))+\hm{F}(u(s))\right\Vert _{L^{1}\mathcal{H}}\\
			& \lesssim\frac{1}{\delta} e^{-(\frac{ \rho}{2})}+\left\Vert \hm{u}\right\Vert _{\mathcal{S}_{\mathcal{H}}}^{2}+\left\Vert \hm{u}\right\Vert _{\mathcal{S}_{\mathcal{H}}}^{3}
		\end{align*}
		and
		\begin{align*}
			\left\Vert \int_{0}^{\infty}e^{-sJH_{0}}\left(\mathcal{I}(Q)+\mathfrak{I}(Q,u(s))+\hm{F}(u(s))\right)\,ds\right\Vert _{\mathcal{H}} & \lesssim\left\Vert \text{Mod}'(s)\nabla_{M}\hm{Q}(\hm{y}(s),\hm{\beta}(s))\right\Vert _{L^{1}\mathcal{H}}\\
			& \lesssim\left\Vert \hm{y}'(\cdot)-\hm{\beta}(\cdot)\right\Vert _{L_{t}^{1}}+\left\Vert \hm{\beta}'(\cdot)\right\Vert _{L_{t}^{1}}.
		\end{align*}
		Therefore, one has
		\begin{align*}
			\left\Vert \hm{\psi}_{+}\right\Vert _{\mathcal{H}} & \lesssim \frac{1}{\delta} e^{-(\frac{ \rho}{2})}+\left\Vert \hm{u}(0)\right\Vert _{\mathcal{H}}+\left\Vert \hm{y}'(\cdot)-\hm{\beta}(\cdot)\right\Vert _{L_{t}^{1}}+\left\Vert \hm{\beta}'(\cdot)\right\Vert _{L_{t}^{1}}\\
			& +\left\Vert \hm{u}\right\Vert _{\mathcal{S}_{\mathcal{H}}}+\left\Vert \hm{u}\right\Vert _{\mathcal{S}_{\mathcal{H}}}^{2}+\left\Vert \hm{u}\right\Vert _{\mathcal{S}_{\mathcal{H}}}^{3}\lesssim\eta
		\end{align*}
		and moreover, by construction, it follows
		\[
		\lim_{t\rightarrow\infty}\text{\ensuremath{\left\Vert \text{\ensuremath{\hm{u}(t)}}-e^{JH_{0}t}\boldsymbol{\psi}_{+}\right\Vert _{\mathcal{H}}}}=0
		\]
		and returning to the solution to the original equation $\hm{\psi}(t)$,
		one has
		\[
		\lim_{t\rightarrow\infty}\text{\ensuremath{\left\Vert \boldsymbol{\psi}(t)-\boldsymbol{Q}(y(t),\beta(t))-e^{JH_{0}t}\boldsymbol{\psi}_{+}\right\Vert _{\mathcal{H}}}}=0.
		\]
		Finally using the limiting properties of Lorentz parameters, we conclude	
		\[
		\lim_{t\rightarrow\infty}\text{\ensuremath{\left\Vert \boldsymbol{\psi}(t)-\boldsymbol{Q}(y(t),\beta)-e^{JH_{0}t}\boldsymbol{\psi}_{+}\right\Vert _{\mathcal{H}}}}=0
		\]
		which gives the desired result.

	\end{proof}

	\section{Construction of centre-stable manifolds}\label{sec:CSmanifold}
	
	In this section, using tools from previous section, we will construct a centre-stable around well-separated multi-soliton family.  
	
	\subsection{Construction of centre-stable manifolds}
	We first construct the centre-stable manifold in a small neighborhood of  one single  well-separated multi-soliton  in the energy space.

	Given a multi-soliton $\hm{Q}(\hm{\beta},\hm{y})$, we define the linear stable subspace associated to it as
	{\small\begin{equation}\label{eq:stablelinear}
			\mathcal{N}_{L}\left(\hm{Q}(\hm{\beta},\hm{y}),\eta\right):=\left\{ \begin{array}{c}
				\hm{R}\in\mathcal{H}|\left\langle \alpha_{j,m,\beta_{j}}^{1}(\cdot-y_{j}),\hm{R}\right\rangle =\left\langle \alpha_{j,m,\beta_{j}}^{0}(\cdot-y_{j}),\hm{R}\right\rangle =\left\langle \alpha_{j,\beta}^{+}(\cdot-y_{j}),\hm{R}\right\rangle =0\\
				j=1,\ldots,N,\,m=1,2,3,\,\left\Vert \hm{R}\right\Vert _{\mathcal{H}}<\eta
			\end{array}\right\} 
	\end{equation}} which is of codimension $7N$.
	
	\begin{thm}\label{thm:manifold1}
		
		For every $\delta>0$, there exist $\rho>0$ and $\eta>0$ such that
		the following holds. Let $\big(\hm{\beta}_0,\hm{y}_0\big)$ satisfy the
		separation condition  in the sense of Definition \ref{def:sep} with parameters $\delta$ and $\rho$. 
		There exists a map $\hm{\Phi}$ such that
		\[
		\hm{\Phi}:\mathcal{N}_{L}\left(\hm{Q}(\hm{\beta}_{0},\hm{y}_{0}),\eta\right)\rightarrow\mathbb{R}^{N}
		\]
		\begin{equation}\label{eq:boundphi}
			\left|\hm{\Phi}(\hm{R}_{0})\right|\lesssim\frac{1}{\delta}e^{-\rho}+\frac{1}{\delta}e^{-\rho}\left\Vert \hm{R}_{0}\right\Vert _{\mathcal{H}}+\left\Vert \hm{R}_{0}\right\Vert _{\mathcal{H}}^{2},\,\hm{R}_{0}\in\mathcal{N}_{L}\left(\hm{Q}(\hm{\beta}_{0},\hm{y}_{0})\right)
		\end{equation}
		\begin{equation}\label{eq:diffphi}
			\left|\hm{\Phi}(\hm{R}_{0})-\hm{\Phi}(\tilde{\hm{R}}_{0})\right|\lesssim\frac{1}{\delta}e^{-\rho}\left\Vert \hm{R}_{0}-\tilde{\hm{R}}_{0}\right\Vert _{\mathcal{H}}+\eta\left\Vert \hm{R}_{0}-\tilde{\hm{R}}_{0}\right\Vert _{\mathcal{H}},\,\hm{R}_{0},\tilde{\hm{R}}_{0}\in\mathcal{N}_{L}\left(\hm{Q}(\hm{\beta}_{0},\hm{y}_{0}),\eta\right)
		\end{equation}
		and so that $\forall\hm{R}_{0}\in\mathcal{N}_{L}\left(\hm{Q}(\hm{\beta}_{0},\hm{y}_{0}),\eta\right)$,
		the solution $\hm{\psi}(t)$ to \eqref{eq:dynkg} with initial data
		\[
		\hm{\psi}(0)=\hm{Q}\left(\hm{\beta}_0,\hm{y}_0\right)+\hm{R}_{0}+\hm{\Phi}(\hm{R}_{0})\cdot\hm{\mathcal{Y}^{+}}(\hm{\beta}_0,\hm{y}_0)
		\]
		where $\hm{\Phi}(\hm{R}_{0})\cdot\hm{\mathcal{Y}^{+}}(\hm{\beta}_0,\hm{y}_0)=\sum_{j=1}^N \phi_j(\hm{R_0})\mathcal{Y}^+_{\beta_{0,j}}(\cdot-y_{0,j})$
		exists globally, and it scatters to the multi-soliton family: there
		exist $\beta_{j}\in\mathbb{R}^{3}$, paths $y_{j}(t)\in\mathbb{R}^{3}$
		and $\boldsymbol{\psi}_{+}$ with the property that $\dot{y}_{j}(t)\rightarrow\beta_{j}$
		and
		\[
		\lim_{t\rightarrow\infty}\text{\ensuremath{\left\Vert \boldsymbol{\psi}(t)-\boldsymbol{Q}(\hm{\beta},\hm{y}(t))-e^{JH_{0}t}\boldsymbol{\psi}_{+}\right\Vert _{\mathcal{H}}}}=0.
		\]
		
		%
		%
	\end{thm}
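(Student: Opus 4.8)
The plan is to construct $\hm{\Phi}$ by a Lyapunov--Perron type fixed-point argument, reading off the quantitative estimates from the bootstrap already performed in the proof of Theorem~\ref{thm:orbitasy3}. Fix $\hm{R}_0\in\mathcal{N}_L(\hm{Q}(\hm{\beta}_0,\hm{y}_0),\eta)$ and a candidate $\hm{p}\in\mathbb{R}^N$ for $\hm{\Phi}(\hm{R}_0)$, and consider $\hm{\psi}(0)=\hm{Q}(\hm{\beta}_0,\hm{y}_0)+\hm{R}_0+\hm{p}\cdot\hm{\cY^+}(\hm{\beta}_0,\hm{y}_0)$. Since $\hm{R}_0$ annihilates the directions $\alpha^0_{m,j},\alpha^1_{m,j}$ at $(\hm{\beta}_0,\hm{y}_0)$, while $\langle\alpha^\iota_{m,\beta_{0,j}},\cY^+_{\beta_{0,k}}\rangle$ is $0$ for $j=k$ and $O(e^{-\rho})$ for $j\neq k$ by the separation condition and the biorthogonality of the spectral data, the modulation Lemma~\ref{lem:mod} produces, as long as the solution stays in the neighborhood, Lipschitz parameters $(\hm{\beta}(t),\hm{y}(t))$ with $(\hm{\beta}(0),\hm{y}(0))=(\hm{\beta}_0,\hm{y}_0)$ and a radiation $\hm{u}(t)=\hm{\psi}(t)-\hm{Q}(\hm{\beta}(t),\hm{y}(t))$ with $\pi_0(t)\hm{u}(t)=0$. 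As in the proof of Theorem~\ref{thm:orbitasy3} I would decompose $\hm{u}=\pi_c\hm{u}+\sum_j a_j^-\cY_j^-+\sum_j a_j^+\cY_j^+$ with $a_j^\pm=\langle\alpha_j^\pm,\hm{u}\rangle$; the essential point of the construction is that the unstable coefficients $a_j^+$ must be given by the \emph{backward} Duhamel identity \eqref{eq:aorbit}, which is the unique bounded choice and is what selects the codimension-$N$ data, while $a_j^-$ obeys the forward identity \eqref{eq:stable} with prescribed datum $a_j^-(0)=\langle\alpha^-_{\beta_{0,j}}(\cdot-y_{0,j}),\hm{R}_0\rangle$, and $\pi_c\hm{u}$ obeys the forward Duhamel for \eqref{eq:equ} with datum $\pi_c\hm{R}_0$.

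Concretely I would set up the map $\mathcal{K}$ on the complete metric space $\mathcal{X}$ of triples $(\hm{u},\hm{\beta}',\hm{y}'-\hm{\beta})$ on $[0,\infty)$ with $\|\hm{u}\|_{\mathcal{S}_{\mathcal{H}}}+\|\hm{\beta}'\|_{L^1_t\cap L^\infty_t}+\|\hm{y}'-\hm{\beta}\|_{L^1_t\cap L^\infty_t}\le M$, where $M\simeq\eta+\tfrac{1}{\delta}e^{-\rho/2}$: from an iterate reconstruct $(\hm{\beta},\hm{y})$ by integrating from $(\hm{\beta}_0,\hm{y}_0)$, form $\hm{\psi}=\hm{Q}(\hm{\beta},\hm{y})+\hm{u}$ and the source $\mathcal{W}=\mathcal{I}(Q)+\mathfrak{I}_1(Q^2,u)+\mathfrak{I}_2(Q,u^2)+\hm{F}(u)+\text{Mod}'(t)\nabla_{M}\hm{Q}$, and output the new radiation obtained by gluing the Strichartz--Duhamel solution of \eqref{eq:equ} for $\pi_c\hm{u}$ with datum $\pi_c\hm{R}_0$, the forward Duhamel \eqref{eq:stable} for $a_j^-$, and the backward Duhamel \eqref{eq:aorbit} for $a_j^+$, together with the new $(\hm{\beta}',\hm{y}'-\hm{\beta})$ read off from \eqref{eq:mod1lem}--\eqref{eq:mod2lem}. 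The relevant estimates are precisely those already established inside Theorem~\ref{thm:orbitasy3}: the Strichartz bound of Theorem~\ref{thm:mainthmlinear}; the interaction bounds $\|\mathcal{I}(Q)\|_{\mathcal{S}^*_{\mathcal{H}}}\lesssim\tfrac{1}{\delta}e^{-\rho/2}$ and $\|\mathfrak{I}_1(Q^2,u)\|_{\mathcal{S}^*_{\mathcal{H}}}\lesssim e^{-\rho/2}\|\hm{u}\|_{\mathcal{S}_{\mathcal{H}}}$; the nonlinear bounds $\|\mathfrak{I}_2(Q,u^2)\|_{\mathcal{S}^*_{\mathcal{H}}}\lesssim\|\hm{u}\|_{\mathcal{S}_{\mathcal{H}}}^2$ and $\|\hm{F}(u)\|_{\mathcal{S}^*_{\mathcal{H}}}\lesssim\|\hm{u}\|_{\mathcal{S}_{\mathcal{H}}}^3$; the modulation-term bound $\|\text{Mod}'\nabla_{M}\hm{Q}\|_{L^1_t\mathcal{H}}\lesssim\|\hm{\beta}'\|_{L^1_t}+\|\hm{y}'-\hm{\beta}\|_{L^1_t}$; the estimates \eqref{eq:aj-} and \eqref{eq:aj0}--\eqref{eq:aj1} for $\mathcal{B}$; and \eqref{eq:tildemod} for the modulation parameters. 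Choosing first $\rho$ large in terms of $\delta$ and then $\eta$ small, these show that $\mathcal{K}$ maps the $M$-ball of $\mathcal{X}$ into itself and contracts it, each contribution to the difference of two images carrying a gain $\lesssim\tfrac{1}{\delta}e^{-\rho/2}+M\ll1$; the fixed point is then the sought global solution.

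Having the fixed point, I would define $\hm{\Phi}(\hm{R}_0):=\hm{p}$ as the coefficient vector of $\pi_+(0)\hm{u}(0)$, so that $\hm{\psi}(0)=\hm{Q}(\hm{\beta}_0,\hm{y}_0)+\hm{R}_0+\hm{\Phi}(\hm{R}_0)\cdot\hm{\cY^+}(\hm{\beta}_0,\hm{y}_0)$ as required by the statement. Evaluating \eqref{eq:aorbit} at $t=0$ and inserting the bounds above gives $|\hm{\Phi}(\hm{R}_0)|\lesssim\tfrac{1}{\delta}e^{-\rho}+\tfrac{1}{\delta}e^{-\rho}\|\hm{R}_0\|_{\mathcal{H}}+\|\hm{R}_0\|_{\mathcal{H}}^2$, which is \eqref{eq:boundphi}; the improvement from $e^{-\rho/2}$ to $e^{-\rho}$ comes from pairing the soliton-centred, exponentially localized $\alpha^+_{\beta_{0,j}}$ against the off-diagonal interaction terms in $\mathcal{I}(Q)$ and $\mathfrak{I}_1(Q^2,u)$. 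Running the same argument on the difference of the two fixed points attached to $\hm{R}_0$ and $\tilde{\hm{R}}_0$ yields \eqref{eq:diffphi}, the factor $\tfrac{1}{\delta}e^{-\rho}$ coming again from the interaction terms and the factor $\eta$ from the nonlinear terms, the linear contribution of the prescribed data cancelling in $\hm{\Phi}$. Finally the global existence and scattering conclusion is immediate: with $\|\hm{u}\|_{\mathcal{S}_{\mathcal{H}}}+\|\hm{\beta}'\|_{L^1_t}+\|\hm{y}'-\hm{\beta}\|_{L^1_t}\lesssim\eta$ in hand, the Yang--Feldman argument with endpoint Strichartz estimates at the end of the proof of Theorem~\ref{thm:orbitasy3} produces $\hm{\psi}_+\in\mathcal{H}$ and limiting velocities $\beta_j$ with $\dot{y}_j(t)\to\beta_j$ and $\|\hm{\psi}(t)-\hm{Q}(\hm{\beta},\hm{y}(t))-e^{JH_0t}\hm{\psi}_+\|_{\mathcal{H}}\to0$.

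The hard part will be organizing $\mathcal{K}$ so that the three coupled unknowns---the radiation $\hm{u}$ through its centre part, the unstable coefficients $a_j^+$, and the modulation parameters---are reconstructed in a mutually contractive way: the centre part needs the Strichartz estimate, which presupposes the modulation orthogonality and feeds on $\text{Mod}'\nabla_{M}\hm{Q}$; the $a_j^+$ require the backward (not forward) Duhamel representation, the only one compatible with boundedness and the source of the codimension $N$; and the modulation equations feed back into $\hm{u}$. Closing this loop forces the order of the quantifiers (first $\rho=\rho(\delta)$, then $\eta=\eta(\delta,\rho)$) and makes it necessary to take the radius $M$ of the ball of order $\eta+\tfrac{1}{\delta}e^{-\rho/2}$ rather than just $\eta$, so that the constant interaction terms fit inside it. A subordinate technical point is that $\hm{R}_0+\hm{p}\cdot\hm{\cY^+}$ is orthogonal to the null directions at $(\hm{\beta}_0,\hm{y}_0)$ only up to $O(e^{-\rho})$ off-diagonal corrections, so that $(\hm{\beta}(0),\hm{y}(0))=(\hm{\beta}_0,\hm{y}_0)$ should be enforced by a final application of the implicit function theorem as in Lemma~\ref{lem:mod}, the discrepancy being absorbed into the bounds for $\hm{\Phi}$.
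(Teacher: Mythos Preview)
Your overall scheme---set up a Lyapunov--Perron map coupling the centre part via Strichartz, the stable modes via forward Duhamel, the unstable modes via the backward identity~\eqref{eq:aorbit}, and the modulation parameters via Lemma~\ref{lem:mod}, then define $\hm\Phi(\hm R_0)$ as $(a_j^+(0))_{j=1}^N$---is the same as the paper's, and your \emph{a priori} estimates in the strong norm $\|\hm u\|_{\mathcal S_{\mathcal H}}+\|\hm\beta'\|_{L^1_t\cap L^\infty_t}+\|\hm y'-\hm\beta\|_{L^1_t\cap L^\infty_t}$ match the paper's space $\mathcal A_\eta$. The definition of $\hm\Phi$, the bounds \eqref{eq:boundphi}--\eqref{eq:diffphi}, and the scattering conclusion via Yang--Feldman are all handled as you outline.

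There is, however, a genuine gap in your contraction step. You assert that the difference of two images of $\mathcal K$ carries a gain $\lesssim \tfrac{1}{\delta}e^{-\rho/2}+M$ in the strong norm, but this fails once you compare two iterates with distinct trajectories $(\hm\beta_i,\hm y_i)$, $i=1,2$. Even though $\|\delta\hm\beta'\|_{L^1_t}$ and $\|\delta\hm y'-\delta\hm\beta\|_{L^1_t}$ are small, the path difference $\delta\hm y(t)=\int_0^t(\delta\hm y'-\delta\hm\beta)\,ds+\int_0^t\delta\hm\beta\,ds$ can grow linearly in $t$ through the second integral, so $\|\delta\hm y\|_{L^\infty_t}$ is \emph{not} controlled by your contraction norm. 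Consequently terms such as $\big(\mathcal V_{\beta_{j,1}(t)}(\cdot-y_{j,1}(t))-\mathcal V_{\beta_{j,2}(t)}(\cdot-y_{j,2}(t))\big)\tilde{\hm u}_2$, the difference $\mathcal I_1(Q)-\mathcal I_2(Q)$, and the change in the projections $\pi_{0,1}-\pi_{0,2}$ are not small in $\mathcal S^*_{\mathcal H}$ uniformly on $[0,\infty)$, and the map is not a contraction there.

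The paper resolves this by running a \emph{two-topology} argument: the \emph{a priori} bounds live in $\mathcal A_\eta$, but the contraction is performed in a strictly weaker space $\mathcal A_{G,\eta}$ built from exponentially growing weights, with $\|f\|_G:=\sup_{t\ge0}e^{-\kappa t}|f(t)|$ and $\|\hm v\|_{G\mathcal S_{\mathcal H}}:=\sup_{t\ge0}e^{-\kappa t}\|\hm v\|_{\mathcal S_{\mathcal H}[0,t]}$ for a small fixed $\kappa>0$. In this norm one has $\|\delta\hm\beta\|_G\le\kappa^{-1}\|\delta\hm\beta'\|_G$ and $\|\delta\hm y\|_G\lesssim\kappa^{-1}\|\delta\hm y'-\delta\hm\beta\|_G+\kappa^{-2}\|\delta\hm\beta'\|_G$, which converts the uncontrolled path difference into a finite quantity at the cost of powers of $\kappa^{-1}$; these are then beaten by the smallness factors $\eta$ and $\tfrac{1}{\delta}e^{-\rho/2}$. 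One also needs $\kappa$ small enough that the backward kernel $e^{(s-t)\kappa}\exp\big(\int_s^t\nu/\gamma_{j,1}\big)$ remains integrable, so that the unstable-mode difference closes. The Lipschitz dependence of $\hm\Phi$ on $\hm R_0$ is then obtained in this weak topology, which suffices since $\hm\Phi$ takes values in a finite-dimensional space. Your proposal would become correct once you insert this weighted-norm contraction in place of the strong-norm one.
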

	
	\begin{proof}
		We show the theorem above via two steps:  assuming the desired scattering solution exists first, we compute the equations it needs to satisfy, and then we use contraction and iteration to construct the solution. 
		
		Assume there is a solution $\hm{\psi}$ scatters to the multi-soliton family. 
		We set $\hm{\psi}(t)-\hm{Q}\big(\hm{\beta}(t),\hm{y}(t)\big)=:\hm{u}(t)$ and take the initial data of the form
		\begin{equation}\label{eq:initial}
			\boldsymbol{\psi}_{0}-\boldsymbol{Q}(\hm{\beta}_{0},\hm{\hm{y}}_{0})=\hm{u}(0)=\hm{R}_0+\hm{\Phi}\cdot\hm{\mathcal{Y}}^+(\hm{\beta}_0,\hm{y}_0)
		\end{equation}
		for some vector $\hm{\Phi}$, and then by construction one has that	the orthogonality conditions
		\[
		\left\langle \alpha_{j,m,\beta_{j,0}}^{0}(\cdot-y_{j,0}),\hm{u}(0)\right\rangle =\left\langle \alpha_{j,m,\beta_{j,0}}^{1}(\cdot-y_{j,0}),\hm{u}(0)\right\rangle =0
		\]
		hold for $j=1,\ldots,N$ and $m=1,2,3$. 
		In order to  ensure the scattering behavior of $\hm{u}$, we solve the error term $\hm{u}(t)$
		and parameters $\left(\hm{\beta}(t),\hm{y}(t)\right)\in\mathbb{R}^{3N}\times\mathbb{R}^{3N}$
		satisfying the following system of equation
		\begin{align}
			\frac{d}{dt}\hm{u}(t) & =JH(t)\hm{u}(t)+\mathcal{I}(Q)+\mathfrak{I}_{1}(Q^{2},u)+\mathfrak{I}_{2}(Q,u^{2})+\hm{F}(u)\nonumber \\
			& -\dot{\hm{\beta}}(t)\partial_{\hm{\beta}}\hm{Q}(\hm{\beta}(t),\hm{y}(t))-\left(\dot{\hm{y}}(t)-\hm{\beta}(t)\right)\partial_{\hm{y}}\hm{Q}(\hm{\beta}(t),\hm{y}(t))\nonumber \\
			& =:JH(t)\hm{u}(t)+\mathcal{I}(Q)+\mathfrak{I}(Q,u)+\hm{F}(u)+\text{Mod}'(t)\nabla_{M}\hm{Q}(\hm{\beta}(t),\hm{y}(t))\label{eq:manieq1}\\
			& =:JH(t)\hm{u}(t)+\mathcal{W}(x,t)\nonumber 
		\end{align}
		and  modulation equations		
		\begin{equation}
			\frac{d}{dt}\left\langle \alpha_{m,\beta_{j}(t)}^{0}(\cdot-y_{j}(t)),\hm{u}(t)\right\rangle =\frac{d}{dt}\left\langle \alpha_{m,\beta_{j}(t)}^{1}(\cdot-y_{j}(t)),\hm{u}(t)\right\rangle =0\label{eq:manieq2}
		\end{equation}
		with initial data $\hm{u}(0)$ and $\left(\hm{\beta}(t),\hm{y}(t)\right)=\left(\hm{\beta}_0,\hm{y}_0\right)$.		
		
		Using projections associated  to $(\hm{\beta}(t),\hm{y}(t))$,  one can decompose $\hm{u}(t)$ into
		\[
		\hm{u}(t)=\pi_{c}(t)\hm{u}(t)+\pi_+(t)\hm{u}(t)+\pi_-(t)\hm{u}(t).
		\]
		To control $\pi_+(t)\hm{u}(t)$, we compute $a_{j}^{+}(t)$. Following the same computations as in the proof of Theorem \ref{thm:orbitasy3}, one has 
		\begin{align*}
			a_{j}^{+}(t) & =a_{j}^{+}(0)\exp\left(\int_{0}^{t}\frac{\nu}{\gamma_{j}(\tau)}\,d\tau\right)+\int_{0}^{t}\exp\left(\int_{s}^{t}\frac{\nu}{\gamma_{j}(\tau)}\,d\tau\right)\left\langle \alpha_{\beta_{j}(s)}^{+}(\cdot-y_{j}(s),\mathcal{W}(\cdot,s)\right\rangle \,ds\\
			& +\sum_{k\neq j}\int_{0}^{t}\exp\left(\int_{s}^{t}\frac{\nu}{\gamma_{j}(\tau)}\,d\tau\right)\left\langle \alpha_{\beta_{j}(s)}^{+}\big(\cdot-y_{j}(s)\big),\mathcal{V}_{\beta_{k}(s)}\big(\cdot-y_{k}(s)\big)\hm{u}(s)\right\rangle \,ds\\
			& \text{+\ensuremath{\int_{0}^{t}}}\exp\left(\int_{s}^{t}\frac{\nu}{\gamma_{j}(\tau)}\,d\tau\right)\left\langle \left(y'_{j}(s)-\beta_{j}(s)\right)\cdot\nabla\alpha_{\beta_{j}(s)}^{\text{+}}(\cdot-y_{j}(s)),\hm{u}(s)\right\rangle \,ds\\
			& \text{+\ensuremath{\int_{0}^{t}}}\exp\left(\int_{s}^{t}\frac{\nu}{\gamma_{j}(\tau)}\,d\tau\right)\left\langle \beta_{j}'(s)\partial_{\beta}\alpha_{\beta_{j}(s)}^{+}(\cdot-y_{j}(s)),\hm{u}(s)\right\rangle \,ds.
		\end{align*}
		Moreover, since the solution scatters to the multi-soliton family,
		the system above is stabilized by the following conditions for the
		unstable modes:
		\begin{align}
			a_{j}^{+}(t) & =\left\langle \alpha_{\beta_{j}(t)}^{+}(\cdot-y_{j}(t)),\hm{u}(t)\right\rangle \label{eq:manieq3}\\
			& =-\int_{t}^{\infty}\exp\left(\int_{s}^{t}\frac{\nu}{\gamma_{j}(\tau)}\,d\tau\right)\left\langle \alpha_{\beta_{j}(s)}^{+}\big(\cdot-y_{j}(s)\big),\mathcal{W}(\cdot,s)\right\rangle \,ds\nonumber \\
			& -\sum_{k\neq j}\int_{t}^{\infty}\exp\left(\int_{s}^{t}\frac{\nu}{\gamma_{j}(\tau)}\,d\tau\right)\left\langle \alpha_{\beta_{j}(s)}^{+}\big(\cdot-y_{j}(s)\big),\mathcal{V}_{\beta_{k}(s)}\big(\cdot-y_{k}(s)\big)\hm{u}(s)\right\rangle \,ds\nonumber \\
			& -\int_{t}^{\infty}\exp\left(\int_{s}^{t}\frac{\nu}{\gamma_{j}(\tau)}\,d\tau\right)\left\langle \left(y'_{j}(s)-\beta_{j}(s)\right)\cdot\nabla\alpha_{\beta_{j}(s)}^{\text{+}}\big(\cdot-y_{j}(s)\big),\hm{u}(s)\right\rangle \,ds\nonumber \\
			& -\int_{t}^{\infty}\exp\left(\int_{s}^{t}\frac{\nu}{\gamma_{j}(\tau)}\,d\tau\right)\left\langle \beta_{j}'(s)\partial_{\beta}\alpha_{\beta_{j}(s)}^{+}\big(\cdot-y_{j}(s)\big),\hm{u}(s)\right\rangle \,ds.\nonumber 
		\end{align}
		By similar computations, we have the following formula for stable modes:
		{\small\begin{align}
				a_{j}^{-}(t) & =a_{j}^{-}(0)\exp\left(-\int_{0}^{t}\frac{\nu}{\gamma_{j}(\tau)}\,d\tau\right)-\int_{0}^{t}\exp\left(-\int_{s}^{t}\frac{\nu}{\gamma_{j}(\tau)}\,d\tau\right)\left\langle \alpha_{\beta_{j}}^{-}(\cdot-\beta_{j}s-x_{j}),\mathcal{W}(\cdot,s)\right\rangle \,ds\label{eq:stablecs}\\
				& +\sum_{k\neq j}\int_{0}^{t}\exp\left(-\int_{s}^{t}\frac{\nu}{\gamma_{j}(\tau)}\,d\tau\right)\left\langle \alpha_{\beta_{j}}^{-}(\cdot-\beta_{j}s-x_{j}),\mathcal{V}_{\beta_{k}}\big(\cdot-\beta_{k}s-x_{k}\big)\hm{u}(s)\right\rangle \,ds\nonumber \\
				& +\int_{0}^{t}\exp\left(-\int_{s}^{t}\frac{\nu}{\gamma_{j}(\tau)}\,d\tau\right)\left\langle \left(y'_{j}(s)-\beta_{j}(s)\right)\cdot\nabla\alpha_{\beta_{j}(s)}^{-}(\cdot-y_{j}(s)),\hm{u}(s)\right\rangle \,ds\nonumber \\
				& +\int_{0}^{t}\exp\left(-\int_{s}^{t}\frac{\nu}{\gamma_{j}(\tau)}\,d\tau\right)\left\langle \beta_{j}'(s)\partial_{\beta}\alpha_{\beta_{j}(s)}^{-}(\cdot-y_{j}(s)),\hm{u}(s)\right\rangle \,ds.\nonumber 
		\end{align}}
		With computations above, to find a solution which scatters to the multi-soliton family, it is reduced to find the solution to the system with equations
		\eqref{eq:manieq1},
		\eqref{eq:manieq2},  \eqref{eq:manieq3} and \eqref{eq:stablecs}. We will achieve this goal by the fixed point theorem
		via a contraction map.   Assuming the unique solution can be found, then the initial values for unstable modes are uniquely determined: $\hm{\Phi}=(\Phi_j)_{j=1}^N=(a_j^+(0))_{j=1}^N$ from \eqref{eq:manieq3}. This in particular  defines$$\hm{\Phi}(\hm{R}_0)=(a_j^+(0))_{j=1}^N.$$

		\noindent{\bf Basic setting of the contraction map:} Given a set of data $\left(\hm{\beta}(t),\hm{y}(t),\hm{u}(t)\right),$
		we consider the following map
		\begin{equation}
			\mathscr{F}\text{\ensuremath{\left(\left(\hm{\beta}(t),\hm{y}(t),\hm{u}(t)\right)\right)}=\ensuremath{\left(\tilde{\hm{\beta}}(t),\tilde{\hm{y}}(t),\tilde{\hm{u}}(t)\right)}}\label{eq:inputoutput}
		\end{equation}
		where $\left(\tilde{\hm{\beta}}(t),\tilde{\hm{y}}(t),\tilde{\hm{u}}(t)\right)$
		are defined by solving
		\begin{align*}
			\frac{d}{dt}\tilde{\hm{u}}(t) & =JH(t)\tilde{\hm{u}}(t)+\mathcal{I}(Q)+\mathfrak{I}_{1}(Q^{2},u)+\mathfrak{I}_{2}(Q,u^{2})+\hm{F}(u)\\
			& -\dot{\tilde{\hm{\beta}}}(t)\partial_{\hm{\beta}}\hm{Q}(\hm{\beta}(t),\hm{y}(t))-\left(\dot{\tilde{\hm{y}}}(t)-\tilde{\hm{\beta}}(t)\right)\partial_{\hm{y}}\hm{Q}(\hm{\beta}(t),\hm{y}(t))\\
			& =:JH(t)\tilde{\hm{u}}(t)+\mathcal{I}(Q)+\mathfrak{I}(Q,u)+\hm{F}(u)+\widetilde{\text{Mod}'}(t)\nabla_{M}\hm{Q}(\hm{\beta}(t),\hm{y}(t))\\
			& =:JH(t)\tilde{\hm{u}}(t)+\mathcal{W}(t,x)
		\end{align*}
		and the modulation equations
		\begin{equation}\label{eq:modmani}
			\frac{d}{dt}\left\langle \alpha_{m,\beta_{j}(t)}^{0}(\cdot-y_{j}(t)),\tilde{\hm{u}}(t)\right\rangle =\frac{d}{dt}\left\langle \alpha_{m,\beta_{j}(t)}^{1}(\cdot-y_{j}(t)),\tilde{\hm{u}}(t)\right\rangle =0
		\end{equation}
		with the same initial data for the modulation parameters, i.e., $\left(\tilde{\hm{\beta}}(0),\tilde{\hm{y}}(0)\right)=\left(\hm{\beta}_{0},\hm{\hm{y}}_{0}\right)$, and 
		\begin{equation}
			\tilde{\hm{u}}(0)=\hm{R}_0+\hm{\Phi}\cdot\hm{\mathcal{Y}}^+(\hm{\beta}_0,\hm{y}_0) 
		\end{equation} for some $\hm{\Phi}$.

		\noindent{\bf {\it {\bf \emph{a priori}}} estimates:}
		We perform {\it a priori} estimates in
		\[
		\mathcal{A}_{\eta}=:\left\{ \left(\hm{\beta}(t),\hm{y}(t),\hm{u}(t)\right)|\left\Vert \hm{\beta}'\right\Vert _{L_{t}^{1}\bigcap L_{t}^{\infty}}+\left\Vert \hm{y}'-\hm{\beta}\right\Vert _{L_{t}^{1}\bigcap L_{t}^{\infty}}+\left\Vert \hm{u}\right\Vert^2 _{\mathcal{S}_{\mathcal{H}}}\leq A\eta^2\right\} 
		\]
		with the norm
		\begin{equation}\label{eq:Aetanorm}
			\left\Vert \left(\hm{\beta},\hm{y},\hm{u}\right)\right\Vert _{\mathcal{A}_{\eta}}=\sqrt{\left\Vert \hm{\beta}'\right\Vert _{L_{t}^{1}\bigcap L_{t}^{\infty}}}+\sqrt{\left\Vert \hm{y}'-\hm{\beta}\right\Vert _{L_{t}^{1}\bigcap L_{t}^{\infty}}}+\left\Vert \hm{u}\right\Vert _{\mathcal{S}_{\mathcal{H}}}.
		\end{equation}
		Supposing  $\left(\hm{\beta}(t),\hm{y}(t),\hm{u}(t)\right)\in\mathcal{A}_{\eta}$, we claim that there is a unique choice of $\hm{\Phi}:=\hm{\Phi}(\hm{R}_0,\hm{\beta},\hm{y},\hm{u})$ such that $\left(\tilde{\hm{\beta}}(t),\tilde{\hm{y}}(t),\tilde{\hm{u}}(t)\right)\in \mathcal{A}_\eta$ and we can estimate the size of the norm in terms of $\left(\hm{\beta}(t),\hm{y}(t),\hm{u}(t)\right)$.
		We start with modulation equations \eqref{eq:modmani}. By the computations in Lemma \ref{lem:mod}		
		\begin{equation}
			|\tilde{\beta}'_{j}(t)|\lesssim \left(\left\Vert \tilde{\hm{u}}(t)\right\Vert _{\mathcal{H}}+\left\Vert \hm{u}(t)\right\Vert _{\mathcal{H}}+1\right)e^{-(\frac{\delta t+ \rho}{2})}+\sum_{m=1}^{3}\left|\left\langle \alpha_{m,\beta_{j}(t)}^{0}(\cdot-y_{j}(t)),\hm {q}(t)\right\rangle \right|
		\end{equation}
		and
		\begin{equation}
			\left|\tilde{y}'_{j}(t)-\tilde{\beta}_{j}(t)\right|\lesssim\left(\left\Vert \tilde{\hm{u}}(t)\right\Vert _{\mathcal{H}}+\left\Vert \hm{u}(t)\right\Vert _{\mathcal{H}}+1\right)e^{-(\frac{\delta t+ \rho}{2})}+\sum_{m=1}^{3}\left|\left\langle \alpha_{m,\beta_{j}(t)}^{1}(\cdot-y_{j}(t)),\hm {q}(t)\right\rangle \right|
		\end{equation}
		where $\hm {q}(t)=\Large(0,3\sum_{i=1}^{N}Q_{\beta_{i}}(\cdot-y_{i}(t))u^{2}\Large)^T. $
		In the computations above, we  used for $\iota=0,1 $
		\begin{equation}
			\left|\left\langle \alpha_{m,\beta_{j}(t)}^{\iota}(\cdot-y_{j}(t)),\mathcal{V}_{i}(t)\tilde{\hm{u}}(t)\right\rangle \right|\lesssim \exp\left\{-\frac{|y_j(t)-y_i(t)|}{2}\right\}\left\Vert \tilde{\hm{u}}\right\Vert _{\mathcal{H}}
		\end{equation} due to the exponential decay of eigenfunction.  We also used the separation properties of trajectories which are ensured by the size of  norms for $(\hm{\beta}(t),\hm{y}(t))$ and
		\begin{align}
			|y_j(t)-y_i(t)|&\geq \left|\int_0^t( y'_j(s)-y'_i(s))\,ds\right|+\rho\\&\geq \rho+\left|\int_0^t\left((\beta_{j,0}-\beta_{i,0})+ (\beta'_j(s)-\beta_{j,0})-(\beta'_i(s)-\beta_{i,0})\right)\,ds\right|\nonumber
			\\
			&- \left|\int_0^t\left( (y'_j(s)-\beta_j(s))-(y'_i(s)-\beta_i(s)\right)\,ds\right|\nonumber\\
			&\geq \rho+ \left|\int_0^t (\delta -2\eta^2 )\,ds\right|-2\eta^2\ \geq \frac{\delta t+ \rho}{2}\nonumber
		\end{align}provided that we picked $\eta\ll \delta$.  We also used similar estimates for
		\begin{equation}
			\left|\left\langle \alpha_{m,\beta_{j}(t)}^{\iota}(\cdot-y_{j}(t)),\mathfrak{I}_1(Q,u)\right\rangle \right|\lesssim e^{-(\frac{\delta t+ \rho}{2})}\left\Vert \hm{u}\right\Vert _{\mathcal{H}}
		\end{equation}
		and 
		\begin{equation}
			\left|\left\langle \alpha_{m,\beta_{j}(t)}^{\iota}(\cdot-y_{j}(t)),	\mathcal{I}(Q)\right\rangle \right|\lesssim e^{-(\frac{\delta t+ \rho}{2})}
		\end{equation}
		where we recall that
		\begin{equation}
			\mathcal{I}(Q)=\left(\begin{array}{c}
				0\\
				\ensuremath{\left(\sum_{j=1}^{N}\sigma_{j}Q_{\beta_{j}(t)}(\cdot-y_{j}(t))\right)^{3}-\sum_{j=1}^{N}\left(\sigma_{j}Q_{\beta_{j}(t)}(\cdot-y_{j}(t))\right)^{3}}
			\end{array}\right)
		\end{equation}
		\begin{equation}
			\mathfrak{I}_1(Q,u)=\left(\begin{array}{c}
				0\\
				3\sum_{i\neq k}\sigma_i\sigma_k Q_{\beta_{i}}(\cdot-y_{i}(t))Q_{\beta_{k}}(\cdot-y_{k}(t))u
			\end{array}\right).
		\end{equation}
		Next, by direct estimates and integration in $t$, we conclude that
		\begin{equation}
			\left\Vert \tilde{\hm{\beta}}'\right\Vert _{L_{t}^{1}\bigcap L_{t}^{\infty}}+\left\Vert \tilde{\hm{y}}'-\tilde{\hm{\beta}}\right\Vert _{L_{t}^{1}\bigcap L_{t}^{\infty}}\lesssim \frac{1}{\delta} e^{-(\frac{ \rho}{2})}\left\Vert \hm{u}\right\Vert _{\mathcal{S}_{\mathcal{H}}}+\frac{1}{\delta} e^{-(\frac{ \rho}{2})}+\frac{1}{\delta} e^{-(\frac{ \rho}{2})}\left\Vert \tilde{\hm{u}}\right\Vert _{\mathcal{S}_{\mathcal{H}}}+\left\Vert \hm{u}\right\Vert _{\mathcal{S}_{\mathcal{H}}}^2.\label{eq:tildemodcs}
		\end{equation}
		Applying Strichartz estimates, see Theorem \ref{thm:mainthmlinear}, we get
		{\small\begin{equation}
				\left\Vert \tilde{\hm{u}}\right\Vert _{\mathcal{S}_{\mathcal{H}}}\lesssim\left\Vert \tilde{\hm{u}}(0)\right\Vert _{\mathcal{H}}+\left\Vert \tilde{\hm{\beta}}'\right\Vert _{L_{t}^{1}\bigcap L_{t}^{\infty}}+\left\Vert \tilde{\hm{y}}'-\tilde{\hm{\beta}}\right\Vert _{L_{t}^{1}\bigcap L_{t}^{\infty}}+\frac{1}{\delta} e^{-(\frac{ \rho}{2})}+\frac{1}{\delta} e^{-(\frac{ \rho}{2})}\left\Vert \hm{u}\right\Vert _{\mathcal{S}_{\mathcal{H}}}+\left\Vert \hm{u}\right\Vert _{\mathcal{S}_{\mathcal{H}}}^2+\Vert \tilde{B}\Vert_{L^1_t\bigcap L^\infty_t}\label{eq:tildecs}
		\end{equation}}where
		\begin{equation}
			\tilde{B}(t):=\sum_{j=1}^N \Large ( |	\tilde{a}_{j}^{+}(t)|+	|\tilde{a}_{j}^{-}(t)|\large).
		\end{equation}
		Requiring 	$\left(\tilde{\hm{\beta}}(t),\tilde{\hm{y}}(t),\tilde{\hm{u}}(t)\right)\in \mathcal{A}_\eta$, it gives the stabilization conditions
		\begin{align}
			\tilde{a}_{j}^{+}(t) & :=\left\langle \alpha_{\beta_{j}(t)}^{+}(\cdot-y_{j}(t)),\tilde{\hm{u}}(t)\right\rangle \label{eq:manieq3-1}\\
			& =-\int_{t}^{\infty}\exp\left(\int_{s}^{t}\frac{\nu}{\gamma_{j}(\tau)}\,d\tau\right)\left\langle \alpha_{\beta_{j}(s)}^{+}\big(\cdot-y_{j}(s)\big),\mathcal{W}(\cdot,s)\right\rangle \,ds\nonumber \\
			& -\sum_{k\neq j}\int_{t}^{\infty}\exp\left(\int_{s}^{t}\frac{\nu}{\gamma_{j}(\tau)}\,d\tau\right)\left\langle \alpha_{\beta_{j}(s)}^{+}\big(\cdot-y_{j}(s)\big),\mathcal{V}_{\beta_{k}(s)}\big(\cdot-y_{k}(s)\big)\tilde{\hm{u}}(s)\right\rangle \,ds\nonumber \\
			& -\int_{t}^{\infty}\exp\left(\int_{s}^{t}\frac{\nu}{\gamma_{j}(\tau)}\,d\tau\right)\left\langle \left(y'_{j}(s)-\beta_{j}(s)\right)\cdot\nabla\alpha_{\beta_{j}(s)}^{\text{+}}\big(\cdot-y_{j}(s)\big),\tilde{\hm{u}}(s)\right\rangle ds\nonumber \\
			& -\int_{t}^{\infty}\exp\left(\int_{s}^{t}\frac{\nu}{\gamma_{j}(\tau)}\,d\tau\right)\left\langle \beta_{j}'(s)\partial_{\beta}\alpha_{\beta_{j}(s)}^{+}\big(\cdot-y_{j}(s)\big),\tilde{\hm{u}}(s)\right\rangle \,ds\nonumber .
		\end{align}
		Then for stable mode, we have		
		{\small\begin{align}
				\tilde{a}_{j}^{-}(t) & =a_{j}^{-}(0)\exp\left(-\int_{0}^{t}\frac{\nu}{\gamma_{j}(\tau)}\,d\tau\right)-\int_{0}^{t}\exp\left(-\int_{s}^{t}\frac{\nu}{\gamma_{j}(\tau)}\,d\tau\right)\left\langle \alpha_{\beta_{j}}^{-}(\cdot-\beta_{j}s-x_{j}),\mathcal{W}(\cdot,s)\right\rangle \,ds\label{eq:stableclass22}\\
				& +\sum_{k\neq j}\int_{0}^{t}\exp\left(-\int_{s}^{t}\frac{\nu}{\gamma_{j}(\tau)}\,d\tau\right)\left\langle \alpha_{\beta_{j}}^{-}(\cdot-\beta_{j}s-x_{j}),\mathcal{V}_{\beta_{k}}\big(\cdot-\beta_{k}s-x_{k}\big)\tilde{\hm{u}}(s))\right\rangle \,ds\nonumber \\
				& +\int_{0}^{t}\exp\left(-\int_{s}^{t}\frac{\nu}{\gamma_{j}(\tau)}\,d\tau\right)\left\langle \left(y'_{j}(s)-\beta_{j}(s)\right)\cdot\nabla\alpha_{\beta_{j}(s)}^{-}(\cdot-y_{j}(s)),\tilde{\hm{u}}(s)\right\rangle \,ds\nonumber \\
				& +\int_{0}^{t}\exp\left(-\int_{s}^{t}\frac{\nu}{\gamma_{j}(\tau)}\,d\tau\right)\left\langle \beta_{j}'(s)\partial_{\beta}\alpha_{\beta_{j}(s)}^{-}(\cdot-y_{j}(s)),\tilde{\hm{u}}(s)\right\rangle \,ds.\nonumber 
		\end{align}}We analyze stable/unstable modes and control the $L^\infty_t\bigcap L^1_t$ norm of $\tilde{a}_j^{\pm}(t)$. From \eqref{eq:manieq3-1}, due to the exponential decay of $\exp\left(\int_{s}^{t}\frac{\nu}{\gamma_{j}(\tau)}\,d\tau\right)$ and the exponential decay of $\exp\left(-\int_{s}^{t}\frac{\nu}{\gamma_{j}(\tau)}\,d\tau\right)$ in \eqref{eq:stableclass22},
		by Young's inequality,  it suffices to bound the $L_{t}^{167tr}$ and
		$L_{t}^{\infty}$ norms of 
		\begin{align*}
			\left\langle \alpha_{\beta_{j}(s)}^{\pm}(\cdot-y_{j}(s)),\mathcal{W}(\cdot,s)\right\rangle +\sum_{k\neq j}\left\langle \alpha_{\beta_{j}(s)}^{\pm}\big(\cdot-y_{j}(s)\big),\mathcal{V}_{\beta_{k}(s)}\big(\cdot-y_{k}(s)\big)\hm{u}(s)\right\rangle \\
			\left\langle \left(y'_{j}(s)-\beta_{j}(s)\right)\cdot\nabla\alpha_{\beta_{j}(s)}^{\pm}(\cdot-y_{j}(s)),\hm{u}(s)\right\rangle +\left\langle \beta_{j}'(s)\partial_{\beta}\alpha_{\beta_{j}(s)}^{\pm}(\cdot-y_{j}(s)),\hm{u}(s)\right\rangle .
		\end{align*}
		Clearly, due to the separation of trajectories and the exponential
		decay of eigenfunctions, one has for $k\neq j$
		\[
		\left|\left\langle \alpha_{\beta_{j}(s)}^{\pm}(\cdot-y_{j}(s)),\mathcal{V}_{\beta_{k}(s)}(\cdot-y_{k}(s))\right\rangle \right|\lesssim e^{-(\frac{\delta s+ \rho}{2})}.
		\]
		Since eigenfunctions are smooth, we also have
		\begin{align*}
			\left|\left\langle \left(y'_{j}(s)-\beta_{j}(s)\right)\cdot\nabla\alpha_{\beta_{j}(s)}^{\pm}(\cdot-y_{j}(s)),\tilde{\hm{u}}(s)\right\rangle \right|\\
			+\left|\left\langle \beta_{j}'(s)\partial_{\beta}\alpha_{\beta_{j}(s)}^{\pm}(\cdot-y_{j}(s)),\tilde{\hm{u}}\right\rangle \right|\\
			\lesssim\left(|\hm{y}'(s)-\hm{\beta}(s)|+|\hm{\beta}'(s)|\right)\left\Vert \tilde{\hm{u}}(s)\right\Vert _{\mathcal{H}}.
		\end{align*}
		Finally, one can compute that
		\begin{align*}
			\left|\left\langle \alpha_{\beta_{j}(s)}^{\pm}(\cdot-y_{j}(s),\mathcal{W}(\cdot,s)\right\rangle \right| & \lesssim e^{-(\frac{\delta s+ \rho}{2})}+|\tilde{\hm{y}}'(s)-\tilde{\hm{\beta}}(s)|+|\tilde{\hm{\beta}}'(s)|\\
			& +\text{\ensuremath{\left(\left\Vert \hm{u}(s)\right\Vert _{\mathcal{H}}^{2}+\left\Vert \hm{u}(s)\right\Vert _{\mathcal{H}}\right)}}e^{-(\frac{\delta s+ \rho}{2})}\\
			&+	\left|\left\langle \alpha_{\beta_{j}(s)}^{+}(\cdot-y_{j}(s),\hm{F}(\hm{u)}\right\rangle \right|
		\end{align*}
		Therefore, we can conclude that given the separation condition, one
		has 
		\begin{align}
			\sum_{j=1}^{N}\left\Vert a_{j}^{+}(\cdot)\right\Vert _{L_{t}^{\infty}\bigcap L_{t}^{1}} & \lesssim\frac{1}{\delta} e^{-(\frac{ \rho}{2})}+ \left\Vert |\tilde{\hm{y}}'(s)-\tilde{\hm{\beta}}(s)|+|\tilde{\hm{\beta}}'(s)|\right\Vert _{L_{t}^{\infty}\bigcap L^1_t}\label{eq:tildeustable}\\
			&+\left\Vert |\hm{y}'(s)-\hm{\beta}(s)|+|\hm{\beta}'(s)|\right\Vert_{L^\infty_t\bigcap L^1_t}\nonumber\\
			& +\frac{1}{\delta} e^{-(\frac{ \rho}{2})}\sup_{s\in\mathbb{R}^{+}}\left(\left\Vert \hm{u}(s)\right\Vert _{\mathcal{H}}^{2}+\left\Vert \hm{u}(s)\right\Vert _{\mathcal{H}}+\left\Vert \tilde{\hm{u}}(s)\right\Vert _{\mathcal{H}}\right)+\left\Vert \hm{u}\right\Vert _{\mathcal{S}_{\mathcal{H}}}^3\nonumber
		\end{align}
		and
		\begin{align}
			\sum_{j=1}^{N}\left\Vert a_{j}^{-}(\cdot)\right\Vert _{L_{t}^{\infty}\bigcap L_{t}^{1}} & \lesssim \Vert \hm{R}_0\Vert_{\mathcal{H}}+\frac{1}{\delta} e^{-(\frac{ \rho}{2})}+ \left\Vert |\tilde{\hm{y}}'(s)-\tilde{\hm{\beta}}(s)|+|\tilde{\hm{\beta}}'(s)|\right\Vert _{L_{t}^{\infty}\bigcap L^1_t}\label{eq:tildestable}\\
			&+\left\Vert |\hm{y}'(s)-\hm{\beta}(s)|+|\hm{\beta}'(s)|\right\Vert_{L^\infty_t\bigcap L^1_t}\nonumber\\
			& +\frac{1}{\delta} e^{-(\frac{ \rho}{2})}\sup_{s\in\mathbb{R}^{+}}\left(\left\Vert \hm{u}(s)\right\Vert _{\mathcal{H}}^{2}+\left\Vert \tilde{\hm{u}}(s)\right\Vert _{\mathcal{H}}+\left\Vert \hm{u}(s)\right\Vert _{\mathcal{H}}\right)+\left\Vert \hm{u}\right\Vert _{\mathcal{S}_{\mathcal{H}}}^3.\nonumber
		\end{align}
		Putting \eqref{eq:tildemodcs}, \eqref{eq:tildecs},  \eqref{eq:tildeustable} and \eqref{eq:tildestable}
		together, taking $\rho$ large and $\eta$ small enough depending
		on $\delta$ and prescribed constants, we conclude that $\left(\tilde{\hm{\beta}}(t),\tilde{\hm{y}}(t),\tilde{\hm{u}}(t)\right)\in\mathcal{A}_{\eta}$.
		In particular, the unique choice of $\hm{\Phi}=(\tilde{a}^+_j(0))_{j=1}^N$ satisfying
		\begin{align}
			|\hm{\Phi}(\hm{R}_0,\hm{\beta},\hm{y},\hm{u})|&\lesssim \frac{1}{\delta} e^{-(\frac{ \rho}{2})}
			+\frac{1}{\delta} e^{-(\frac{ \rho}{2})}\left\Vert \hm{R}_0\right\Vert _{\mathcal{H}}+\frac{1}{\delta} e^{-(\frac{ \rho}{2})}\left\Vert \hm{u}\right\Vert _{\mathcal{S}_{\mathcal{H}}}\\
			&+\left\Vert \hm{u}\right\Vert _{\mathcal{S}_{\mathcal{H}}}^2+\left\Vert |\hm{y}'(s)-\hm{\beta}(s)|+|\hm{\beta}'(s)|\right\Vert_{L^\infty_t\bigcap L^1_t}\nonumber
		\end{align}
		The claim and the {\it a priori} estimates are proved.
		
		\noindent{\bf Contraction:}	 
		Next we will perform the contraction  using weaker norms in order
		to handle potential divergence of paths.
		
		Let $\kappa>0$ be a small fixed number. We define the weighted norm for a function
		of $t$, 
		\[
		\left\Vert f\right\Vert _{G}:=\sup_{t\in\mathbb{R}^{+}}e^{-\kappa t}|f(t)|
		\]
		and for a space-time function, we define weighted Strichartz norms as
		\[
		\left\Vert v\right\Vert _{G\mathcal{S}_{\mathcal{H}}}:=\sup_{t\in\mathbb{R}^{+}}e^{-\kappa t}\left\Vert \hm{v}\right\Vert _{\mathcal{S}_{\mathcal{H}}[0,t]},\,	\left\Vert \hm{v}\right\Vert _{G\mathcal{S}^{*}_{\mathcal{H}}}:=\sup_{t\in\mathbb{R}^{+}}e^{-\kappa t}\left\Vert \hm{v}\right\Vert _{\mathcal{S}^{*}_{\mathcal{H}}[0,t]}
		\]
		We will show the contraction in the following space
		{\small	\begin{equation}\label{eq:AGeta}
				\mathcal{A}_{G,\eta}=:\left\{ \left(\hm{\beta}(t),\hm{y}(t),\hm{u}(t)\right)|\left\Vert \hm{\beta}\right\Vert _{L_{t}^{1}\bigcap L_{t}^{\infty}}+\left\Vert \hm{y}'-\hm{\beta}\right\Vert _{L_{t}^{1}\bigcap L_{t}^{\infty}}+\left\Vert \hm{u}\right\Vert^2 _{\mathcal{S}_{\mathcal{H}}}\leq B\eta^2,\left(\hm{\beta}(0),\hm{y}(0)\right)=\left(\hm{\beta}_{0},\hm{y}_{0}\right)\right\} \end{equation}}
		endowed with the norm
		\begin{equation}\label{eq:AGetanorm}
			\left\Vert \left(\hm{\beta},\hm{y},\hm{u}\right)\right\Vert _{\mathcal{A}_{G,\eta}}=\sqrt{\left\Vert \hm{\beta}'\right\Vert _{G}}+\sqrt{\left\Vert \hm{y}'-\hm{\beta}\right\Vert _{G}}+\left\Vert \hm{u}\right\Vert _{G\mathcal{S}_{\mathcal{H}}}.
		\end{equation}
		To see contraction, we consider two given sets of data $\left(\hm{\beta}_{1}(t),\hm{y}_{1}(t),\hm{u}_{1}(t)\right)$
		and $\left(\hm{\beta}_{2}(t),\hm{y}_{2}(t),\hm{u}_{2}(t)\right)$
		and then we estimate the difference given by two outputs given by
		\eqref{eq:inputoutput}:  for $i=1,2$
		\begin{equation}
			\mathscr{F}\text{\ensuremath{\left(\left(\hm{\beta}_i(t),\hm{y}_i(t),\hm{u}_i(t)\right)\right)}=\ensuremath{\left(\tilde{\hm{\beta}}_i(t),\tilde{\hm{y}}_i(t),\tilde{\hm{u}}_i(t)\right)}}
		\end{equation}
		We define
		\[
		\delta\hm{y}=\hm{y}_{1}-\hm{y}_{2},\,\delta\hm{\beta}=\hm{\beta}_{1}-\hm{\beta}_{2},\,\delta\hm{u}=\hm{u}_{1}-\hm{u}_{2}
		\]
		and
		\[
		\delta\tilde{\hm{y}}=\tilde{\hm{y}}_{1}-\tilde{\hm{y}}_{2},\,\delta\tilde{\hm{\beta}}=\tilde{\hm{\beta}}_{1}-\tilde{\hm{\beta}}_{2},\,\delta\tilde{\hm{u}}=\tilde{\hm{u}}_{1}-\tilde{\hm{u}}_{2}.
		\]
		For modulation parameters, we note that
		\[
		\left\Vert \delta\tilde{\hm{\beta}}\right\Vert _{G}=\frac{1}{\kappa}\left\Vert \delta\tilde{\hm{\beta}}'\right\Vert _{G},\,\left\Vert \delta\tilde{\hm{y}}\right\Vert _{G}=\frac{1}{\kappa}\left\Vert \delta\tilde{\hm{y}}'\right\Vert _{G}\lesssim\frac{1}{\kappa}\left\Vert \delta\tilde{\hm{y}}'-\delta\tilde{\hm{\beta}}\right\Vert _{G}+\frac{1}{\kappa^{2}}\left\Vert \delta\tilde{\hm{\beta}}'\right\Vert _{G}
		\]
		since these parameters have the same initial conditions. 
		These estimates can be used to the difference of two sets of trajectories
		directly. In particular, one can estimate the differences of potentials
		and discrete modes given by two different trajectories. To illustrate
		the idea, we compute
		{\small	\begin{align}
				\left\Vert \left(\mathcal{V}_{\beta_{j,1}(t)}\left(\cdot-y_{j,1}(t)\right)-\mathcal{V}_{\beta_{j,2}(t)}\left(\cdot-y_{j,2}(t)\right)\right)\hm{u}_{2}(t)\right\Vert _{GS^{*}} & \lesssim\left(\left\Vert \delta\hm{\beta}\right\Vert _{G}+\left\Vert \delta\hm{y}\right\Vert _{G}\right)\left\Vert \hm{u}_{2}\right\Vert_{\mathcal{S}_{\mathcal{H}}}\nonumber \\
				& \lesssim\left(\frac{1}{\kappa}\left\Vert \delta\hm{y}'-\delta\hm{\beta}\right\Vert _{G}+\frac{1}{\kappa^{2}}\left\Vert \delta\hm{\beta}'\right\Vert _{G}\right)\left\Vert \hm{u}_{2}\right\Vert_{\mathcal{S}_{\mathcal{H}}}.\label{eq:excompG}
		\end{align}}
		Same computations can be applied to estimate inner products between
		radiation terms $\hm{u}_{j}$ with discrete modes, and interactions
		caused by multiple potentials. 
		\begin{align*}
			\left\Vert \left\langle \alpha_{\beta_{j,1}(s)}^{\iota}\big(\cdot-y_{j,1}(s)\big),\hm{M}_{1}\right\rangle -\left\langle \alpha_{\beta_{j,2}(s)}^{\iota}\big(\cdot-y_{j,2}(s)\big),\hm{M}_{2}\right\rangle \right\Vert _{G}\\
			\text{\ensuremath{\lesssim}\ensuremath{\left\Vert \left\langle \alpha_{\beta_{j,1}(s)}^{\iota}\big(\cdot-y_{j,1}(s)\big),\hm{M}_{1}\right\rangle -\left\langle \alpha_{\beta_{j,2}(s)}^{\iota}\big(\cdot-y_{j,2}(s)\big),\hm{M}_{1}\right\rangle \right\Vert _{G}}}\\
			+\text{\ensuremath{\left\Vert \left\langle \alpha_{\beta_{j,1}(s)}^{\iota}\big(\cdot-y_{j,1}(s)\big),\hm{M}_{1}\right\rangle -\left\langle \alpha_{\beta_{j,2}(s)}^{\iota}\big(\cdot-y_{j,2}(s)\big),\hm{M}_{2}\right\rangle \right\Vert _{G}}}
		\end{align*}
		where $\iota=\pm,0,1$ and $\hm{M}_{j}$, $j=1,2$ denote one of terms appearing
		in \eqref{eq:manieq3-1} and modulation equations.
		
		The first term can be estimated as
		\begin{align*}
			\left\Vert \left\langle \alpha_{\beta_{j,1}(s)}^{\iota}\big(\cdot-y_{j,1}(s)\big),\hm{M}_{1}\right\rangle -\left\langle \alpha_{\beta_{j,2}(s)}^{\iota}\big(\cdot-y_{j,2}(s)\big),\hm{M}_{1}\right\rangle \right\Vert _{G}\\
			\lesssim\left(\left\Vert \delta\hm{\beta}\right\Vert _{G}+\left\Vert \delta\hm{y}\right\Vert _{G}\right)\text{\ensuremath{\left(e^{-(\frac{ \rho}{2})}+\left\Vert \delta\hm{y}'-\delta\hm{\beta}\right\Vert _{G}+\left\Vert \delta\hm{\beta}'\right\Vert _{G}+\left\Vert \hm{u}_{1}\right\Vert _{\mathcal{S}_{\mathcal{H}}}^{2}+\left\Vert \tilde{\hm{u}}_{1}\right\Vert _{\mathcal{S}_{\mathcal{H}}}\right)}}
		\end{align*}
		and the second term can be estimated in a regular manner as in the
		estimates for the central direction which we will compute later on.
		
		Then using ideas above and modulation equations for $\tilde{\hm{\beta}}'_{i}$
		and $\tilde{\hm{y}}'_{i}-\hm{\beta}_{i}$, and then taking the difference
		of those equations, one has
		\begin{equation}
			\left\Vert \delta\tilde{\hm{\beta}}'\right\Vert _{G}\lesssim\frac{1}{\kappa^{2}}B\eta\left(\left\Vert \delta\hm{u}\right\Vert _{G\mathcal{S}_{\mathcal{H}}}+\left\Vert \delta\hm{\beta}\right\Vert _{G}+\left\Vert \delta\hm{y}'-\delta\hm{\beta}\right\Vert _{G}\right)\label{eq:diffmodG1}
		\end{equation}
		\begin{align}
			\left\Vert \delta\tilde{\hm{y}}'\right\Vert _{G} & \lesssim\left\Vert \delta\tilde{\hm{y}}'-\delta\tilde{\hm{\beta}}\right\Vert _{G}+\frac{1}{\kappa}\left\Vert \delta\tilde{\hm{\beta}}'\right\Vert _{G}\label{eq:diffmodG2}\\
			& \lesssim\frac{B\eta}{\kappa^{3}}\left(\left\Vert \delta\hm{u}\right\Vert _{G\mathcal{S}_{\mathcal{H}}}+\left\Vert \delta\hm{\beta}\right\Vert _{G}+\left\Vert \delta\hm{y}'-\delta\hm{\beta}\right\Vert _{G}\right).\nonumber 
		\end{align}
		To estimate the difference of the radiation term $\delta\tilde{\hm{u}},$
		we look at the equation for $\tilde{\hm{u}}_{i}$
		\begin{align}
			\frac{d}{dt}\tilde{\hm{u}}_{i}(t) & =JH_{i}(t)\tilde{\hm{u}}_{i}(t)+\mathcal{I}_{i}(Q)+\mathfrak{I}_{1,i}(Q^{2},u_{i})+\mathfrak{I}_{2,i}(Q,u_{i}^{2})+\hm{F}(u_{i})\label{eq:tildeui}\\
			& -\tilde{\hm{\beta}}'_{i}(t)\partial_{\hm{\beta}}\hm{Q}(\hm{\beta}_{i}(t), \hm{y}_{i}(t))-\left(\tilde{\hm{y}_{i}}'(t)-\tilde{\hm{\beta}}_{i}(t)\right)\partial_{\hm{y}}\hm{Q}(\hm{\beta}_{i}(t), \hm{y}_{i}(t))\nonumber \\
			& =:JH_{i}(t)\tilde{\hm{u}}_{i}(t)+\mathcal{I}_{i}(Q)+\mathfrak{I}_{i}(Q,u_{i})+\hm{F}(u_{i})+\widetilde{\text{Mod}'}_{i}(t)\nabla_{M}\hm{Q}(\hm{\beta}_{i}(t), \hm{y}_{i}(t))\nonumber 
		\end{align}
		Taking the difference of RHS above for $\tilde{\hm{u}}_{1}$ and $\tilde{\hm{u}}_{2}$,
		we get
		\begin{align}
			\frac{d}{dt}\delta\tilde{\hm{u}}(t) &=	JH_{1}(t)\delta\tilde{\hm{u}}+J(H_{1}-H_{2})\tilde{\hm{u}}_{2}
			+\mathcal{I}_{1}(Q)-\mathcal{I}_{2}(Q)\label{eq:diff}\\
			&+\mathfrak{I}_{1}(Q,u_{1})-\mathfrak{I}_{2}(Q,u_{2})
			+\hm{F}(u_{1})-\hm{F}(u_{2})\nonumber\\
			&+\widetilde{\text{Mod}'}_{1}(t)\nabla_{M}\hm{Q}(\hm{y}_{1}(t),\hm{\beta}_{1}(t))-\widetilde{\text{Mod}'}_{2}(t)\nabla_{M}\hm{Q}(\hm{y}_{2}(t),\hm{\beta}_{2}(t)) \nonumber\\
			&=:JH_{1}(t)\delta\tilde{\hm{u}}+\hm{\Delta}\nonumber
			.
		\end{align}
		We first note that
		\[
		\left\Vert \mathcal{I}_{1}(Q)-\mathcal{I}_{2}(Q)\right\Vert _{G\mathcal{S}^{*}_{\mathcal{H}}}\lesssim \frac{1}{\delta} e^{-(\frac{ \rho}{2})}\left\Vert \delta\hm{\beta}\right\Vert _{G}+\frac{1}{\delta} e^{-(\frac{ \rho}{2})}\left\Vert \delta\hm{y}\right\Vert _{G}
		\]
		\begin{align*}
			\left\Vert \mathfrak{I}_{1}(Q,u_{1})-\mathfrak{I}_{2}(Q,u_{2})\right\Vert _{G\mathcal{S}^{*}_{\mathcal{H}}}&\lesssim
			\frac{1}{\delta} e^{-(\frac{ \rho}{2})}\left\Vert \delta\hm{u}\right\Vert _{G\mathcal{S}_{\mathcal{H}}}	+	(\left\Vert \hm{u}_{1}\right\Vert _{\mathcal{S}_{\mathcal{H}}}+\left\Vert \hm{u}_{2}\right\Vert _{\mathcal{S}_{\mathcal{H}}})\left\Vert \delta\hm{u}\right\Vert _{G\mathcal{S}_{\mathcal{H}}}\\
			&+\left(\left\Vert \delta\hm{\beta}\right\Vert _{G}+\left\Vert \delta\hm{y}\right\Vert _{G}\right)\left\Vert \hm{u}_{2}\right\Vert _{\mathcal{S}_{\mathcal{H}}}
		\end{align*}
		\[
		\left\Vert \hm{F}(u_{1})-\hm{F}(u_{2})\right\Vert _{G\mathcal{S}^{*}_{\mathcal{H}}}\lesssim\left(\left\Vert \hm{u}_{1}\right\Vert _{\mathcal{S}_{\mathcal{H}}}^{2}+\left\Vert \hm{u}_{2}\right\Vert _{\mathcal{S}_{\mathcal{H}}}^{2}\right)\left\Vert \delta\hm{u}\right\Vert _{G\mathcal{S}_{\mathcal{H}}}
		\]
		\begin{align*}
			\left\Vert \widetilde{\text{Mod}'}_{1}(t)\nabla_{M}\hm{Q}(\hm{y}_{1}(t),\hm{\beta}_{1}(t))-\widetilde{\text{Mod}'}_{2}(t)\nabla_{M}\hm{Q}(\hm{y}_{2}(t),\hm{\beta}_{2}(t))\right\Vert _{G\mathcal{S}^{*}_{\mathcal{H}}}\,\,\,\,\,\, \,\,\,\,\,\,\,\,\,\\
			\lesssim\left\Vert \delta\tilde{\hm{\beta}}\right\Vert _{G}+\left\Vert \delta\tilde{\hm{y}}'-\delta\tilde{\hm{\beta}}\right\Vert _{G}
			+\left(\left\Vert \tilde{\hm{\beta}}'\right\Vert _{L_{t}^{1}\bigcap L_{t}^{\infty}}+\left\Vert \tilde{\hm{y}}'-\tilde{\hm{\beta}}\right\Vert _{L_{t}^{1}\bigcap L_{t}^{\infty}}\right)\left(\left\Vert \delta\hm{\beta}\right\Vert _{G}+\left\Vert \delta\hm{y}\right\Vert _{G}\right).
		\end{align*}
		Let $\pi_{\ell,i}(t)$, $\ell=\pm,0,c,\,i=1,2$ be  projections
		with respect to $JH_{i}(t)$ given by the set of paths $\left(\hm{\beta}_{i}(t),\hm{y}_{i}(t)\right).$
		Let $\pi_{0,i}(t)$ be the projection on the generalized kernel with
		respect to $JH_{i}(t)$. 		
		To estimate the weighted Strichartz norm, from equation \eqref{eq:diff}, we write
		\begin{equation}
			\delta\tilde{\hm{u}}(t)=\mathcal{T}_1(t,0)\delta\tilde{\hm{u}}(0)+\int_0^t \mathcal{T}_1(t,s) \hm{\Delta}\,ds
		\end{equation} where $\mathcal{T}_1(t,s)$ denotes the propagator with respect to $J\mathcal{H}_1(t)$. Applying Strichartz norms on both sides, it follows
		\begin{align}
			\left\Vert \delta\tilde{\hm{u}}\right\Vert _{\mathcal{S}_{\mathcal{H}}[0,t]} &\lesssim  \left\Vert \delta\tilde{\hm{u}}(0)\right\Vert _{\mathcal{H}}+ \left\Vert \hm{\Delta }\right\Vert _{\mathcal{S}^*_{\mathcal{H}}[0,t]}+\left\Vert \pi_{0,1}(\cdot) \delta\tilde{\hm{u}}\right\Vert _{\mathcal{S}^*_{\mathcal{H}}[0,t]}\\
			&+\left\Vert \pi_{+,1}(\cdot) \delta\tilde{\hm{u}}\right\Vert _{\mathcal{S}^*_{\mathcal{H}}[0,t]}
			+\left\Vert \pi_{-,1}(\cdot) \delta\tilde{\hm{u}}\right\Vert _{\mathcal{S}^*_{\mathcal{H}}[0,t]},
		\end{align}
		then the multiplying both sides by $e^{-\kappa t}$ and taking supremun, we get
		\begin{align}
			\left\Vert \delta\tilde{\hm{u}}\right\Vert _{G\mathcal{S}_{\mathcal{H}}} &\lesssim  \left\Vert \delta\tilde{\hm{u}}(0)\right\Vert _{\mathcal{H}}+ \left\Vert \hm{\Delta }\right\Vert _{G\mathcal{S}^*_{\mathcal{H}}}+\left\Vert \pi_{0,1}(\cdot) \delta\tilde{\hm{u}}\right\Vert _{G\mathcal{S}^*_{\mathcal{H}}}\label{eq:deltau}\\
			&+\left\Vert \pi_{+,1}(\cdot) \delta\tilde{\hm{u}}\right\Vert _{G\mathcal{S}^*_{\mathcal{H}}}
			+\left\Vert \pi_{-,1}(\cdot) \delta\tilde{\hm{u}}\right\Vert _{G\mathcal{S}^*_{\mathcal{H}}}.\nonumber\
		\end{align}
		Now we estimate terms appearing on the RHS above one by one.

		Starting from zero modes, by construction, one has
		\[
		\pi_{0,1}(t)(\delta\tilde{\hm{u}})=-\pi_{0,1}(t)\tilde{\hm{u}}_{2}=\text{\ensuremath{\left(\pi_{0,2}(t)-\pi_{0,1}(t)\right)\tilde{\hm{u}}_{2}}}.
		\]
		Then by a similar computation to \eqref{eq:excompG}, using the decay of zero modes, we get
		\begin{align}
			\left\Vert \pi_{0,1}(\cdot)(\delta\tilde{\hm{u}})\right\Vert _{G\mathcal{S}^*_{\mathcal{H}}} & =\left\Vert \left(\pi_{0,2}(t)-\pi_{0,1}(t)\right)\tilde{\hm{u}}_{2}\right\Vert_{G\mathcal{S}^*_{\mathcal{H}}} \nonumber \\
			& \lesssim\left(\frac{1}{\kappa}\left\Vert \delta\hm{y}'-\delta\hm{\beta}\right\Vert _{G}+\frac{1}{\kappa^{2}}\left\Vert \delta\hm{\beta}'\right\Vert _{G}\right)\left\Vert \tilde{\hm{u}}_{2}\right\Vert_{\mathcal{S}_{\mathcal{H}}}\nonumber \\
			& \lesssim\frac{B\eta}{\kappa^{2}}\left(\left\Vert \delta\hm{y}'-\delta\hm{\beta}\right\Vert _{G}+\left\Vert \delta\hm{\beta}'\right\Vert _{G}\right).\label{eq:diffzeroG}
		\end{align}
		Next from the explicit terms of $\hm{\Delta}$, one has		
		\begin{align}
			\left\Vert \delta\hm{\Delta}\right\Vert _{G\mathcal{S}^*_{\mathcal{H}}} & \lesssim \left\Vert \delta H(\cdot)\right\Vert _{G}\left\Vert \tilde{\hm{u}}_{2}\right\Vert_{\mathcal{S}_{\mathcal{H}}}+\frac{1}{\delta} e^{-(\frac{ \rho}{2})}\left(\frac{1}{\kappa}\left\Vert \delta\hm{y}'-\delta\hm{\beta}\right\Vert _{G}+\frac{1}{\kappa^{2}}\left\Vert \delta\hm{\beta}'\right\Vert _{G}\right)\label{eq:diffcsG}\\
			& +\left(\frac{1}{\kappa}\left\Vert \delta\hm{y}'-\delta\hm{\beta}\right\Vert _{G}+\frac{1}{\kappa^{2}}\left\Vert \delta\hm{\beta}'\right\Vert _{G}\right)\left(\left\Vert \hm{u}_{1}\right\Vert_{\mathcal{S}_{\mathcal{H}}}+\left\Vert \hm{u}_{1}\right\Vert _{S_\mathcal{H}}\right)\nonumber \\
			& +\frac{1}{\delta} e^{-(\frac{ \rho}{2})}\left\Vert \delta\hm{u}\right\Vert _{G\mathcal{S}_{\mathcal{H}}}+\left(\left\Vert \hm{u}_{1}\right\Vert_{\mathcal{S}_{\mathcal{H}}}^{2}+\left\Vert \hm{u}_{2}\right\Vert_{\mathcal{S}_{\mathcal{H}}}^{2}\right)\left\Vert \delta\hm{u}\right\Vert _{G\mathcal{S}_{\mathcal{H}}}\nonumber \\
			& \lesssim\left\Vert \delta\hm{u}(0)\right\Vert _{\mathcal{H}}+\frac{B\eta}{\kappa^{2}}\left(\left\Vert \delta\hm{y}'-\delta\hm{\beta}\right\Vert _{G}+\left\Vert \delta\hm{\beta}'\right\Vert _{G}+\left\Vert \delta\hm{u}\right\Vert _{G\mathcal{S}_{\mathcal{H}}}\right)\nonumber \\
			& +\frac{1}{\delta} e^{-(\frac{ \rho}{2})}\left(\frac{1}{\kappa}\left\Vert \delta\hm{y}'-\delta\hm{\beta}\right\Vert _{G}+\frac{1}{\kappa^{2}}\left\Vert \delta\hm{\beta}'\right\Vert _{G}+\left\Vert \delta\hm{u}\right\Vert _{G\mathcal{S}_{\mathcal{H}}}\right).\nonumber 
		\end{align}
		Finally, we estimate $\pi_{\pm,1}(t)(\delta\tilde{\hm{u}})$. 
		This is similar to the argument \eqref{eq:diffcsG} above. The only
		difference is that here we need to take $\kappa$ small enough such
		that
		\[
		\int_{t}^{\infty}\left|e^{(s-t)\kappa}\exp\left(\int_{s}^{t}\frac{\nu}{\gamma_{j,1}(\tau)}\,d\tau\right)\right|ds\lesssim1.
		\]
		Note that for $\pi_{-,1}(t)(\delta \tilde{\hm{u}})$, automatically, we have
		\[
		\int_{0}^{t}\left|e^{(s-t)\kappa}\exp\left(-\int_{s}^{t}\frac{\nu}{\gamma_{j,1}(\tau)}\,d\tau\right)\right|ds\lesssim1.
		\]
		Then we can bound
		\begin{equation}
			\left\Vert \pi_{\pm,1}(t)(\delta\tilde{\hm{u}})\right\Vert _{G\mathcal{S}^*_{\mathcal{H}}}\lesssim\left\Vert \delta\hm{u}(0)\right\Vert _{\mathcal{H}}+\frac{1}{\kappa^{2}}\big(B\eta+\frac{1}{\delta} e^{-(\frac{ \rho}{2})}\big)\left(\left\Vert \delta\hm{y}'-\delta\hm{\beta}\right\Vert _{G}+\left\Vert \delta\hm{\beta}'\right\Vert _{G}+\left\Vert \delta\hm{u}\right\Vert _{G\mathcal{S}_{\mathcal{H}}}\right).\label{eq:diffuG}
		\end{equation}
		Therefore, from \eqref{eq:diffmodG1}, \eqref{eq:diffmodG2},
		\eqref{eq:deltau},
		\eqref{eq:diffzeroG},
		\eqref{eq:diffcsG} and \eqref{eq:diffuG}, we conclude that
		\[
		\left\Vert \left(\delta\tilde{\hm{\beta}},\delta\tilde{\hm{y}},\delta\tilde{\hm{u}}\right)\right\Vert _{\mathcal{A}_{G,\eta}}\lesssim\left\Vert \delta\hm{u}(0)\right\Vert _{\mathcal{H}}+\frac{1}{\kappa^{2}}\big(B\eta+\frac{1}{\delta} e^{-(\frac{ \rho}{2})}\big)\left\Vert \left(\delta\hm{\beta},\delta\hm{y},\delta\hm{u}\right)\right\Vert _{\mathcal{A}_{G,\eta}}
		\]
		which results in that $\mathfrak{F}$ from \eqref{eq:inputoutput} a contraction provided $\eta$ is
		small enough and $\rho>0$ large enough.  Therefore, with the {\it a priori} estimates, there is a unique
		fixed point $\left(\hm{\beta},\hm{y},\hm{u}\right)$ for the map \eqref{eq:inputoutput} in $\mathcal{A}_{G,\eta}$.
		Moreover, by construction, with respect to the norm of $\mathcal{A}_{G,\eta}$,
		the fixed point $\left(\hm{\beta}(t),\hm{y}(t),\hm{u}(t)\right)$
		is a Lipchitz function of  $\pi_{c}(0)\hm{u}(0)=\hm{R}_0$.  In particular
		$\hm{u}(0)$ of the fixed point is a Lipchitz  function of  $\pi_{c}(0)\hm{u}(0)$.
		Note that the contraction topology is irrelevant since $\pi_{+}(0)\hm{u}(0)$
		is of finite-dimension.
		
		Hence, we obtain a Lipchitz  map $\hm{\Phi}:\mathcal{N}_{L}\left(\hm{Q}(\hm{\beta}_{0},\hm{y}_{0}),\eta\right)\rightarrow\mathbb{R}^{N}$ as claimed.  The desired estimates of $\hm{\Phi}$ follow from the {\it a priori} estimates  and difference estimates above.
		
		\smallskip
		
		\noindent{\bf Scattering}: 	
		From the construction, since the fixed point $\left(\hm{\beta}(t),\hm{y}(t),\hm{u}(t)\right)$
		is bounded with respect to the norm of $\mathcal{A}_{\eta}$, i.e.,
		\[
		\sqrt{\left\Vert \hm{\beta}'\right\Vert _{L_{t}^{1}\bigcap L_{t}^{\infty}}}+\sqrt{\left\Vert \hm{y}'-\hm{\beta}\right\Vert _{L_{t}^{1}\bigcap L_{t}^{\infty}}}+\left\Vert \hm{u}\right\Vert _{\mathcal{S}_{\mathcal{H}}}\lesssim\eta.,
		\]
		it follows that there exist $\beta_{j}\in\mathbb{R}^{3}$, 
		with the property that $\dot{y}_{j}(t)\rightarrow\beta_{j}$  $\beta_j(t)\rightarrow\beta_j. $

		The scattering of $\hm{u}$ is achieved by the standard argument via
		the Yang-Feldman equation as in the scattering analysis in Theorem \ref{thm:orbitasy3}. 
		We conclude	
		\[
		\lim_{t\rightarrow\infty}\text{\ensuremath{\left\Vert \boldsymbol{\psi}(t)-\boldsymbol{Q}(y(t),\beta)-e^{JH_{0}t}\boldsymbol{\psi}_{+}\right\Vert _{\mathcal{H}}}}=0
		\]
		which gives the desired result.
	\end{proof}
	\begin{rem}
		Our contraction argument presented above is slightly different from Nakanishi-Schlag \cite{NSch}. We performed our analysis with moving potentials. But in \cite{NSch}, they made a change of variable to make the potential static which is not applicable in our multiple moving potential problem. Moreover, in \cite{NSch}, the estimate for the difference of the centra part is based on an energy estimate at $H^{-1}$ level. In our problem, we used weighted Strichartz estimates.
	\end{rem}
	\begin{rem}\label{rem:sizebeta} Note that the size of neighborhood $\delta$ above also depends on $\max_j |\beta_{0,j}|$ via constants from Strichartz estimates and size of eigenfunctions. We notice that as  $\max_j |\beta_{0,j}|\rightarrow1$, $\delta\rightarrow0$.
		
	\end{rem}
	Note that the construction $\hm{\Phi}(\hm{R}_0)$ above clearly depends on $\boldsymbol{Q}(\hm{\beta}_0,\hm{y}_0)$. So in those settings which we need to emphasize the dependence on the multi-soliton, we will write it as $\hm{\Phi}\left(\hm{R}_0;\boldsymbol{Q}(\hm{\beta}_0,\hm{y}_0)\right).$
	
	Using implicit function theorem and modulation techniques, locally we can gain $6N$ dimension back in the neighborhood of $\boldsymbol{Q}(\hm{\beta}_0,\hm{y}_0)$.
	\begin{thm}\label{thm:localmani} For every $\delta>0$, there exist $\rho>0$ and $\eta>0$ such that
		the following holds. Let $\big(\hm{\beta}_0,\hm{y}_0\big)$ satisfy the
		separation condition  in the sense of Definition \ref{def:sep} with parameters $\delta$ and $\rho$. 
		Then there exists a Lipschitz manifold $\mathcal{N}\left(\boldsymbol{Q}(\hm{\beta}_{0},\hm{\hm{y}}_{0})\right)$
		inside the small ball $B_{\eta}\left(\boldsymbol{Q}(\hm{\beta}_{0},\hm{\hm{y}}_{0})\right)\subset\mathcal{H}$
		of  codimension $N$  with 
		the following property: for any choice of initial data $\hm{\psi}(0)\in\mathcal{N}\left(\boldsymbol{Q}(\hm{\beta}_{0},\hm{\hm{y}}_{0})\right)$,
		the solution $\hm{\psi}(t)$  to \eqref{eq:dynkg} with initial data $\hm{\psi}(0)$
		exists globally, and it scatters to the multi-soliton family there
		exist $\beta_{j}\in\mathbb{R}^{3}$, paths $y_{j}(t)\in\mathbb{R}^{3}$
		and $\boldsymbol{\psi}_{+}\in\mathcal{H}$ with the property that $\dot{y}_{j}(t)\rightarrow\beta_{j}$
		and
		\[
		\lim_{t\rightarrow\infty}\text{\ensuremath{\left\Vert \boldsymbol{\psi}(t)-\boldsymbol{Q}(\hm{\beta},\hm{y}(t))-e^{JH_{0}t}\boldsymbol{\psi}_{+}\right\Vert _{\mathcal{H}}}}=0.
		\]
	\end{thm}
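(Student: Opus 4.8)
The plan is to obtain $\mathcal{N}\big(\hm{Q}(\hm{\beta}_0,\hm{y}_0)\big)$ by gluing together the codimension-$7N$ manifolds produced by Theorem~\ref{thm:manifold1} as the base parameters are allowed to vary in a small neighbourhood of $(\hm{\beta}_0,\hm{y}_0)$. Since the base parameters $(\hm{\beta},\hm{y})\in\mathbb{R}^{3N}\times\mathbb{R}^{3N}$ carry $6N$ real degrees of freedom, this recovers precisely the $6N$ dimensions lost in Theorem~\ref{thm:manifold1} and leaves a manifold of codimension $7N-6N=N$. The gluing will be done by modulation at $t=0$: for $\hm{\psi}_0$ in a sufficiently small ball about $\hm{Q}(\hm{\beta}_0,\hm{y}_0)$, I would apply the implicit function theorem exactly as in the first half of Lemma~\ref{lem:mod} to produce unique Lipschitz parameters $\hm{\beta}_*=\hm{\beta}_*(\hm{\psi}_0)$, $\hm{y}_*=\hm{y}_*(\hm{\psi}_0)$, within $O(\eta)$ of $(\hm{\beta}_0,\hm{y}_0)$ and still satisfying a separation condition (so that Theorem~\ref{thm:manifold1} applies to the base soliton $\hm{Q}(\hm{\beta}_*,\hm{y}_*)$), such that $\hm{u}_0:=\hm{\psi}_0-\hm{Q}(\hm{\beta}_*,\hm{y}_*)$ obeys the $6N$ relations $\langle\alpha^0_{m,\beta_{*,j}}(\cdot-y_{*,j}),\hm{u}_0\rangle=\langle\alpha^1_{m,\beta_{*,j}}(\cdot-y_{*,j}),\hm{u}_0\rangle=0$, i.e. $\pi_0(0)\hm{u}_0=0$, with $\|\hm{u}_0\|_{\mathcal{H}}\lesssim\eta$.

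With $(\hm{\beta}_*,\hm{y}_*)$ selected, I would decompose $\hm{u}_0=\hm{R}_0+\sum_{j=1}^N a_j^+(0)\,\mathcal{Y}^+_{\beta_{*,j}}(\cdot-y_{*,j})$, where $a_j^+(0):=\langle\alpha^+_{\beta_{*,j}}(\cdot-y_{*,j}),\hm{u}_0\rangle$ up to the biorthogonality normalisation and $\hm{R}_0:=\pi_c(0)\hm{u}_0+\pi_-(0)\hm{u}_0\in\mathcal{N}_L\big(\hm{Q}(\hm{\beta}_*,\hm{y}_*),\eta\big)$ after shrinking $\eta$. Theorem~\ref{thm:manifold1} then supplies $\hm{\Phi}\big(\,\cdot\,;\hm{Q}(\hm{\beta}_*,\hm{y}_*)\big)$; re-reading its contraction argument, I would also record that $\hm{\Phi}$ depends Lipschitz-continuously on the base soliton $\hm{Q}(\hm{\beta}_*,\hm{y}_*)$, uniformly over the separated regime, since the fixed-point scheme in $\mathcal{A}_{G,\eta}$ and the underlying weighted Strichartz estimates depend continuously on $(\hm{\beta}_*,\hm{y}_*)$. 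I then define
\[
\Psi_j(\hm{\psi}_0):=a_j^+(0)-\Phi_j\big(\hm{R}_0;\hm{Q}(\hm{\beta}_*,\hm{y}_*)\big),\qquad j=1,\dots,N,
\]
and set $\mathcal{N}\big(\hm{Q}(\hm{\beta}_0,\hm{y}_0)\big):=\{\hm{\psi}_0\in B_\eta(\hm{Q}(\hm{\beta}_0,\hm{y}_0)):\Psi(\hm{\psi}_0)=0\}$. Equivalently $\mathcal{N}$ is the image of $(\hm{\beta}_*,\hm{y}_*,\hm{R}_0)\mapsto\hm{Q}(\hm{\beta}_*,\hm{y}_*)+\hm{R}_0+\hm{\Phi}(\hm{R}_0;\hm{Q}(\hm{\beta}_*,\hm{y}_*))\cdot\hm{\mathcal{Y}^+}$, which is injective because modulation determines $(\hm{\beta}_*,\hm{y}_*)$ uniquely from $\hm{\psi}_0$.

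To see that $\mathcal{N}$ is a codimension-$N$ Lipschitz manifold I would use the modulation decomposition $\hm{\psi}_0\leftrightarrow(\hm{\beta}_*,\hm{y}_*,\pi_c(0)\hm{u}_0,\pi_-(0)\hm{u}_0,\vec a^+)$, with $\vec a^+:=(a_1^+(0),\dots,a_N^+(0))\in\mathbb{R}^N$, as a bi-Lipschitz chart near $\hm{Q}(\hm{\beta}_0,\hm{y}_0)$; in these coordinates $\mathcal{N}$ is $\{\vec a^+=\hm{\Phi}(\hm{R}_0;\hm{Q}(\hm{\beta}_*,\hm{y}_*))\}$. Moving $\hm{\psi}_0$ in the direction $\mathcal{Y}^+_{\beta_{*,j}}(\cdot-y_{*,j})$ changes $a_j^+(0)$ to leading order by $+1$, whereas by \eqref{eq:boundphi}, \eqref{eq:diffphi} and the Lipschitz dependence of $\hm{\Phi}$ on its base soliton it changes each $\Phi_j$ only by $O\big((\eta+\delta^{-1}e^{-\rho})\cdot(\text{increment})\big)$ and moves $(\hm{\beta}_*,\hm{y}_*)$ only by $O(\eta)$; hence $\vec a^+\mapsto\Psi$ is a small Lipschitz perturbation of the identity, and the Lipschitz implicit function theorem exhibits $\mathcal{N}$ as the graph of a Lipschitz map into the $N$-dimensional span of the unstable modes. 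Finally, for every $\hm{\psi}_0\in\mathcal{N}$ the data is, by construction, of the form treated in Theorem~\ref{thm:manifold1} with base soliton $\hm{Q}(\hm{\beta}_*,\hm{y}_*)$ and $\hm{R}_0\in\mathcal{N}_L$, so global existence and the claimed scattering to the multi-soliton family, with $\dot y_j(t)\to\beta_j$ and free-wave radiation $e^{JH_0 t}\hm{\psi}_+\in\mathcal{H}$, follow verbatim from that theorem.

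The step I expect to be the main obstacle is confirming the joint Lipschitz dependence, with small constant, of $\hm{\Phi}$ on $(\hm{R}_0,\hm{\beta}_*,\hm{y}_*)$: the contraction in Theorem~\ref{thm:manifold1} is run with the base soliton frozen, so one must revisit the fixed-point scheme — the weighted norms $\mathcal{A}_{G,\eta}$, and the difference estimates for the potentials $\mathcal{V}_{\beta_{*,j}}(\cdot-y_{*,j})$ and the discrete modes — to verify that all estimates are uniform over the separated parameter range and that the fixed point varies Lipschitz-continuously with the base point. A secondary point to record, though it needs no new input, is non-degeneracy of the modulation Jacobian (invertibility, up to $O(\eta)$ and exponentially small cross terms, of the pairings $\langle\alpha^0_{m,\beta_j},\partial_{y}\hm{Q}\rangle$ and $\langle\alpha^1_{m,\beta_j},\partial_\beta\hm{Q}\rangle$), which guarantees that the $6N$ base-parameter directions are genuinely transverse to each fixed-parameter manifold.
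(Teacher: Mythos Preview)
Your proposal is correct and takes essentially the same approach as the paper: modulate at $t=0$ via the implicit function theorem to select new base parameters $(\hm\beta_*,\hm y_*)$ (the paper writes this as $\hm Q_{\hm R_0}=\mathfrak M(\hm R_0,\hm Q(\hm\beta_0,\hm y_0))$ and $\mathfrak R(\hm R_0)$ for your $\hm u_0$), then apply Theorem~\ref{thm:manifold1} with that shifted base, and exhibit the resulting set as a codimension-$N$ Lipschitz graph. The paper presents the manifold as the image of a codimension-$N$ set $\mathfrak N$ under a bi-Lipschitz map $\mathfrak F_{\hm Q(\hm\beta_0,\hm y_0)}$ whereas you present it as the zero set of $\Psi$, but these are equivalent descriptions of the same object; your flagging of the joint Lipschitz dependence of $\hm\Phi$ on $(\hm R_0,\hm\beta_*,\hm y_*)$ as the point requiring care is apt and is handled in the paper at the same level of detail (by checking $d\mathfrak F(0)\neq 0$).
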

	\begin{proof}
		The analysis is based on the construction above of Theorem 
		\ref{thm:manifold1}. We first introduce some notations again.

		Given a multi-soliton $\hm{Q}(\hm{\beta},\hm{y})$, we let $\Pi_{j,0}\big(\hm{Q}(\hm{\beta},\hm{y})\big)$ be the projection onto the span of $\cY_{m, \beta_{0,j}}^0(\cdot - y_{j})$ and $ \cY_{m, \beta_{j}}^1(\cdot - y_{j})$. And set $\Pi_{j,\pm}\left(\hm{Q}(\hm{\beta},\hm{y})\right)$ be projections onto $\mathcal{Y}^{\pm}_{\beta_{j}}(\cdot-y_{j})$ respectively.
		

		Given any $\hm {R}_0\in B_{\eta}\left(\boldsymbol{Q}(\hm{\beta}_{0},\hm{\hm{y}}_{0})\right)$, we can find \begin{equation}
			\hm{Q}_{\hm{R}_0}:=\mathfrak{M}\left(\hm{R}_0,\hm{Q}(\hm{\beta}_0,\hm{y}_0)\right)
		\end{equation} via the implicit function theorem, see Lemma \ref{lem:mod},  such that
		\begin{equation}
			\Pi_{j,0}\big(\hm{Q}_{\hm{R}_0}\big)\left(\hm{R}_0+\hm{Q}(\hm{\beta}_0,\hm{y}_0)-\hm{Q}_{\hm{R}_0}\right)=0.
		\end{equation}From the implicit function theorem, $\mathfrak{M}\left(\hm{R}_0,\hm{Q}(\hm{\beta}_0,\hm{y}_0)\right)$ is a Lipschitz  function in $\hm{R}_0$.
		
		Further denote
		\begin{equation}
			\mathfrak{R}(\hm{R}_0):=\hm{R}_0+\hm{Q}(\hm{\beta}_0,\hm{y}_0)-\hm{Q}_{\hm{R}_0}.
		\end{equation}
		Then we define a codimension $N$ subspace as
		\begin{equation}
			\mathfrak{N}\left(\hm{Q}(\hm{\beta}_0,\hm{y}_0)\right):=\left\{\hm{R}_0\in\mathcal{H}|\,\left\Vert \hm{R}_0\right\Vert_\mathcal{H}\leq \eta,\Pi_{j,+}\big(\hm{Q}_{\hm{R}_0}\big)\left(\mathfrak{R}(\hm{R}_0)\right)=0,\,j=1,\ldots,N\right\}
		\end{equation}
		and then the codimension $N$ manifold of initial data is given by
		\begin{equation}
			\mathcal{N}\left(\boldsymbol{Q}(\hm{\beta}_{0},\hm{\hm{y}}_{0})\right):=\left\{\hm{Q}_{\hm{R}_0}+\mathfrak{R}(\hm{R}_0)+\hm{\Phi}\left( \mathfrak{R}(\hm{R}_0);\hm{Q}_{\hm{R}_0}\right)\cdot \hm{\mathcal{Y}}^+\left(\hm{Q}_{\hm{R}_0}\right),\,\hm{R}_0\in  \mathfrak{N}\left(\hm{Q}(\hm{\beta}_0,\hm{y}_0)\right)\right\}
		\end{equation} where $\hm{\Psi}$ is constructed from Theorem \ref{thm:manifold1}.
		
		Finally, we define
		\begin{equation}
			\mathfrak{F}_{\boldsymbol{Q}(\hm{\beta}_0,\hm{y}_0)}\left(\hm{R}_0\right):=\hm{Q}_{\hm{R}_0}+\mathfrak{R}(\hm{R}_0)+\hm{\Phi}\left( \mathfrak{R}(\hm{R}_0);\hm{Q}_{\hm{R}_0}\right)\cdot \hm{\mathcal{Y}}^+\left(\hm{Q}_{\hm{R}_0}\right)
		\end{equation}which can be written as
		\begin{equation}
			\hm{Q}_{\hm{R}_0}+\mathfrak{R}(\hm{R}_0)+\hm{\Phi}\left( \mathfrak{R}(\hm{R}_0);\hm{Q}_{\hm{R}_0}\right)\cdot \hm{\mathcal{Y}}^+\left(\hm{Q}_{\hm{R}_0}\right)=\hm{R}_0+\hm{Q}(\hm{\beta}_0,\hm{y}_0)+\hm{\Phi}\left( \mathfrak{R}(\hm{R}_0);\hm{Q}_{\hm{R}_0}\right)\cdot \hm{\mathcal{Y}}^+\left(\hm{Q}_{\hm{R}_0}\right).
		\end{equation}
		We check that $\mathrm{d}\mathfrak{F}_{\boldsymbol{Q}(\hm{\beta}_0,\hm{y}_0)}(0)\neq 0$.
		By construction $\mathcal{N}\left(\boldsymbol{Q}(\hm{\beta}_{0},\hm{\hm{y}}_{0})\right)$ is the image of $ \mathfrak{N}\left(\hm{Q}(\hm{\beta}_0,\hm{y}_0)\right)$ under the bi-Lipschitiz invertibale map $\mathfrak{F}_{\boldsymbol{Q}(\hm{\beta}_0,\hm{y}_0)}$.
		
		By construction and Theorem \ref{thm:manifold1}, for any solution $\hm{\psi}(t)$ with data $\hm{\psi}(0)\in\mathcal{N}\left(\boldsymbol{Q}(\hm{\beta}_{0},\hm{\hm{y}}_{0})\right)$, it scatters to the multi-soliton family as claimed.
	\end{proof}
	
	Recall that notation of the well-separated multi-soliton family $\mathfrak{S}_{\delta,\rho}$ from Definition \ref{def:sepfamily}. For a fixed $\delta>0$, picking $\rho$ large depending on $\delta$, we take the union
	\begin{equation}
		\mathcal{N}:= \bigcup_{\left(\hm{\beta},\hm{y}\right)\in \mathfrak{S}_{\delta,\rho}} \mathcal{N}\left(\boldsymbol{Q}(\hm{\beta},\hm{\hm{y}})\right).
	\end{equation} We claim that the union above  gives the centre-stable manifold over the well-separated multi-soliton family.
	
	Since each $\mathcal{N}\left(\boldsymbol{Q}(\hm{\beta},\hm{\hm{y}})\right)$ is the image of $\mathfrak{N}\left(\boldsymbol{Q}(\hm{\beta},\hm{\hm{y}})\right)$ under the bi-Lipschitz map $\mathfrak{F}_{\boldsymbol{Q}(\hm{\beta},\hm{y})}$,  it suffices to check the following set
	\begin{equation}
		\mathfrak{N}:=\bigcup_{\left(\hm{\beta},\hm{y}\right)\in \mathfrak{S}_{\delta,\rho}} \mathfrak{N}\left(\boldsymbol{Q}(\hm{\beta},\hm{\hm{y}})\right)
	\end{equation}has a Liphschtiz manifold structure. 
	
	In a neighborhood of $\hm{Q}(\hm{\beta}_0,\hm{y}_0)$, around $\mathfrak{N}\left(\hm{Q}(\hm{\beta}_0,\hm{y}_0)\right)$, a local chart can be defined as
	\begin{equation}
		\mathrm{Char}_{\hm{Q}(\hm{\beta}_0,\hm{y}_0)} (\hm{R}_0)=\hm{R}_0+\sum_{j=1}^N\left\{\Pi_{j,+}\big(\hm{Q}_{\hm{R}_0}\big)\left(\mathfrak{R}(\hm{R}_0)\right) -\Pi_{j,+}\big(\hm{Q}(\hm{\beta}_0,\hm{y}_0)\big)\left(\hm{R}_0\right)\right\}
	\end{equation}
	where $\mathfrak{R}(\hm{R}_0)$ and $\mathfrak{R}(\hm{R}_0)$ are constructed using the implicit function theorem in the neighborhood of $\hm{Q}(\hm{\beta}_0,\hm{y}_0)$. Since both $\mathfrak{R}(\hm{R}_0)$ and $\mathfrak{R}(\hm{R}_0)$ are Lipschitz in $\hm{R}_0$, the chart above is also Lipschitz in $\hm{R}_0$. Notice we can also compute the differential
	$\mathrm{d}\mathrm{Char}_{\hm{Q}(\hm{\beta}_0,\hm{y}_0)}(0)\neq 0$. Therefore, in a small enough neighborhood, the inverse of $\mathrm{Char}_{\hm{Q}(\hm{\beta}_0,\hm{y}_0)}$ exists and is also Lipschitz.
	
	To prove the compatibility of local charts for $\mathfrak{N}$, consider two fixed multi-solitons $\hm{Q}(\hm{\beta}_1,\hm{y}_1)$ and $\hm{Q}(\hm{\beta}_2,\hm{y}_2)$
	and  local charts $\mathrm{Char}_{\hm{Q}(\hm{\beta}_1,\hm{y}_1)}$ and $\mathrm{Char}_{\hm{Q}(\hm{\beta}_2,\hm{y}_2)}$, for $i=1,2$
	\begin{equation}
		\mathrm{Char}_{\hm{Q}(\hm{\beta}_i,\hm{y}_i)} (\hm{R}_0)=\hm{R}_0+\sum_{j=1}^N\left\{\Pi_{j,+}\big(\hm{Q}_{i,\hm{R}_0}\big)\left(\mathfrak{R}_i(\hm{R}_0)\right) -\Pi_{j,+}\big(\hm{Q}(\hm{\beta}_i,\hm{y}_i)\big)\left(\hm{R}_0\right)\right\}
	\end{equation}
	where $\hm{Q}_{i,\hm{R}_0}$ and $\mathfrak{R}_i(\hm{R}_0)$ are constructed via the implicit function theorem as above in the neighborhood of $\hm{Q}(\hm{\beta}_i,\hm{y}_i)$.  
	These functions extend to a small neighborhood of $\mathfrak{N}$ in $\mathcal{H}$. They are Lipschitz
	and their differentials are not singular at $0$ , so they are locally invertible and the inverses are also Lipschitz. So in the overlapped region $\mathrm{Char}_{\hm{Q}(\hm{\beta}_1,\hm{y}_1)}\circ \mathrm{Char}_{\hm{Q}(\hm{\beta}_2,\hm{y}_2)}^{-1}$ is locally Lipschitz which results in the  compatibility of local
	charts.
	
	Therefore, we have constructed a centre-stable manifold around the well-separated multi-soliton family.
	\begin{thm}\label{thm:globalmani}
		Fixed a natural number $N$, given $\delta>0$, there exists  $\rho>0$ large, such that there exists a codimension $N$ Lipschitz centre-stable manifold $\mathcal{N}$ around the well-separated multi-soliton family $\mathfrak{S}_{\delta,\rho}$ which is invariant for $t\geq0$ such that for any choice of initial data $\hm{\psi}(0)\in\mathcal{N}$,
		the solution $\hm{\psi}(t)$  to \eqref{eq:dynkg} with initial data $\hm{\psi}(0)$
		exists globally, and it scatters to the multi-soliton family there
		exist $\beta_{j}\in\mathbb{R}^{3}$, paths $y_{j}(t)\in\mathbb{R}^{3}$
		and $\boldsymbol{\psi}_{+}\in\mathcal{H}$ with the property that $\dot{y}_{j}(t)\rightarrow\beta_{j}$
		and
		\[
		\lim_{t\rightarrow\infty}\text{\ensuremath{\left\Vert \boldsymbol{\psi}(t)-\boldsymbol{Q}(\hm{\beta},\hm{y}(t))-e^{JH_{0}t}\boldsymbol{\psi}_{+}\right\Vert _{\mathcal{H}}}}=0.
		\]
	\end{thm}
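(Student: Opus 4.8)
The statement follows by gluing together the local centre–stable manifolds produced in Theorem~\ref{thm:localmani} over all admissible base points, and most of the work has in fact already been carried out in the discussion preceding the statement. First I would fix $N$ and $\delta>0$ and choose $\rho>0$ large enough, depending on $\delta$ (and shrinking as $\max_j|\beta_j|\to1$, cf.\ Remark~\ref{rem:sizebeta}), so that Theorems~\ref{thm:manifold1} and~\ref{thm:localmani} apply with \emph{uniform} constants $\rho,\eta$ for every base point $\bs Q(\bs\beta,\bs y)$ with $(\bs\beta,\bs y)\in\mathfrak S_{\delta,\rho}$. For each such base point we then have the codimension-$N$ Lipschitz piece $\mathcal N(\bs Q(\bs\beta,\bs y))\subset B_\eta(\bs Q(\bs\beta,\bs y))$, realised as the bi-Lipschitz image under $\mathfrak F_{\bs Q(\bs\beta,\bs y)}$ of the codimension-$N$ set $\mathfrak N(\bs Q(\bs\beta,\bs y))$. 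I set $\mathcal N:=\bigcup_{(\bs\beta,\bs y)\in\mathfrak S_{\delta,\rho}}\mathcal N(\bs Q(\bs\beta,\bs y))$ and claim this is the desired manifold.

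\textbf{Lipschitz manifold structure.} Since each $\mathcal N(\bs Q(\bs\beta,\bs y))$ is the bi-Lipschitz image of $\mathfrak N(\bs Q(\bs\beta,\bs y))$, it suffices to equip $\mathfrak N:=\bigcup_{(\bs\beta,\bs y)}\mathfrak N(\bs Q(\bs\beta,\bs y))$ with a codimension-$N$ Lipschitz atlas. I would use the local charts $\mathrm{Char}_{\bs Q(\bs\beta_0,\bs y_0)}$ constructed above: each is Lipschitz in $\bs R_0$ with nonsingular differential at $0$, hence locally bi-Lipschitz onto an open subset of the codimension-$N$ linear space $\{\,\sum_j\Pi_{j,+}(\bs Q(\bs\beta_0,\bs y_0))\bs R_0=0\,\}$, with Lipschitz inverse. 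On overlaps the transition maps $\mathrm{Char}_{\bs Q(\bs\beta_1,\bs y_1)}\circ\mathrm{Char}_{\bs Q(\bs\beta_2,\bs y_2)}^{-1}$ are compositions of Lipschitz maps with Lipschitz inverses, hence Lipschitz; this is the chart compatibility. Therefore $\mathfrak N$, and so $\mathcal N$, is a codimension-$N$ Lipschitz manifold.

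\textbf{Invariance and scattering.} Fix $\bs\psi(0)\in\mathcal N$; it lies in some local piece $\mathcal N(\bs Q(\bs\beta_0,\bs y_0))$, so Theorem~\ref{thm:localmani} (with the a priori bounds from its proof) gives global existence, the orbital bound $\sup_{t\ge0}\inf_{\bs\beta,\bs y}\|\bs\psi(t)-\bs Q(\bs\beta,\bs y)\|_{\mathcal H}\lesssim\eta$, and the scattering conclusion $\|\bs\psi(t)-\bs Q(\bs\beta,\bs y(t))-e^{JH_0t}\bs\psi_+\|_{\mathcal H}\to0$ with $\dot y_j(t)\to\beta_j$. For invariance for $t\ge0$, since $\bs\psi$ stays $O(\eta)$-close to the family, Lemma~\ref{lem:mod} produces modulation parameters $(\bs\beta(t),\bs y(t))$ satisfying \eqref{eq:modconds}, and by the separation estimate reproduced in the proof of Theorem~\ref{thm:orbitasy3} these paths obey the separation condition with parameters $(\delta/2,\rho/2)$, so $(\bs\beta(t_0),\bs y(t_0))\in\mathfrak S_{\delta/2,\rho/2}$ for every $t_0\ge0$. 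It then remains to check that $\bs\psi(t_0)$ satisfies exactly the conditions defining $\mathcal N(\bs Q(\bs\beta(t_0),\bs y(t_0)))$: vanishing of $\pi_0(t_0)\big(\bs\psi(t_0)-\bs Q(\bs\beta(t_0),\bs y(t_0))\big)$ is \eqref{eq:modconds}, while the stabilising value of the unstable coordinate is the tail identity \eqref{eq:manieq3} evaluated at $t_0$ — a time-translation of the integral equation — which coincides with the output $\bs\Phi$ of Theorem~\ref{thm:manifold1} at the base point $\bs Q(\bs\beta(t_0),\bs y(t_0))$ by uniqueness of the contraction fixed point. Hence $\bs\psi(t_0)\in\mathcal N$ for all $t_0\ge0$.

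\textbf{Main obstacle.} The delicate step is invariance: one must show that the characterisation of $\mathcal N$ in terms of the vanishing of $\pi_0$ together with the stabilising value \eqref{eq:manieq3} of the unstable modes is propagated by the flow, i.e.\ that the tail value $a_j^+(t_0)$ at a later time equals the unstable coordinate assigned by the chart based at $(\bs\beta(t_0),\bs y(t_0))$. This is essentially a reparametrisation-in-time argument for \eqref{eq:manieq3} combined with uniqueness of the fixed point in Theorem~\ref{thm:manifold1}; the rest — uniformity of all constants as $(\bs\beta,\bs y)$ ranges over $\mathfrak S_{\delta,\rho}$ and the chart compatibility — is routine given the Lipschitz estimates already in hand.
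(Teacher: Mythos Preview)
Your proposal is correct and follows essentially the same route as the paper: define $\mathcal N$ as the union of the local pieces from Theorem~\ref{thm:localmani}, pass to $\mathfrak N$ via the bi-Lipschitz maps $\mathfrak F_{\bs Q(\bs\beta,\bs y)}$, and verify the Lipschitz atlas using the charts $\mathrm{Char}_{\bs Q(\bs\beta_0,\bs y_0)}$ and their compatibility on overlaps. Your treatment of invariance is in fact more explicit than the paper's --- the paper asserts invariance in the statement of Theorem~\ref{thm:globalmani} but only justifies it afterward via Corollary~\ref{cor:orbitalmanifold}, whereas you sketch the time-translation/uniqueness argument directly; that argument is correct and is precisely the content of Corollary~\ref{cor:orbitalmanifold}.
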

	\begin{rem}
		Notice that by Remark \ref{rem:sizebeta}, the size of the centre-stable manifold above is not uniform.
	\end{rem}
	\begin{rem}
		It is possible to show that the manifold constructed above is real analytic for the cubic Klein-Gordon equation. We refer to Beceanu \cite{Bec2} for the real analytic manifold constructed around the single soliton for the cubic Schr\"odinger equation in $\mathbb{R}^3$.
	\end{rem}
	It is clear that by construction, for any initial data $\hm{\Psi}\in\mathcal{N}$, the corresponding solution $\hm{\Psi}(t)$ enjoys the orbital stability since it scatters to the multi-soliton family. The converse is also sure which is the following corollary.
	\begin{cor}\label{cor:orbitalmanifold}
		For every $\delta > 0$ there exist $\rho, \eta > 0$ such that
		the following holds. Let the initial parameters $(y_j^{in}, \beta_j^{in})_{j=1}^J$
		satisfy the separation condition in the sense of Definition \ref{def:sep}, and let
		\begin{equation}
			\|\bs \psi_0 - \bs Q( \hm{\beta}^{in}, \hm{y}^{in})\|_{\mathcal{H}} \leq \eta.
		\end{equation}
		Suppose that the solution $\bs \psi$ to \eqref{eq:dynkg} with initial data $\hm{\psi}_0$  stays close in  a neighborhood of multi-soliton family:
		\begin{equation}
			\sup_{t\in\mathbb{R}^{+}}\inf_{\hm{\beta}\in\mathbb{R}^{3N},\hm{y}\in\mathbb{R}^{3N}}\left\Vert \boldsymbol{\psi}(t)-\boldsymbol{Q}(\hm{\beta},\hm{y})\right\Vert _{\mathcal{H}}\lesssim\eta,
		\end{equation}
		then $\bs \psi(t)\in\mathcal{N}$.
	\end{cor}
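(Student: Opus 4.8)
The plan is to recognize the modulated decomposition of $\bs\psi$ that was already extracted in the proof of Theorem~\ref{thm:orbitasy3} as the unique fixed point of the contraction map $\mathscr F$ built in the proof of Theorem~\ref{thm:manifold1}, and then to read off directly from the definition of $\mathcal N$ that $\bs\psi(0)$ lies on the local manifold attached to the multi-soliton selected at $t=0$. First I would apply Lemma~\ref{lem:mod} and the bootstrap of Theorem~\ref{thm:orbitasy3} to $\bs\psi$ under the orbital-stability hypothesis: this yields Lipschitz paths $\hm\beta(t),\hm y(t)$ and $\hm u(t):=\bs\psi(t)-\hm Q(\hm\beta(t),\hm y(t))$ with $\pi_0(t)\hm u(t)=0$ for all $t$ (the conditions \eqref{eq:orbortho}), with the paths remaining well separated, and with the global estimate \eqref{eq:conclude}; in particular $(\hm\beta,\hm y,\hm u)\in\mathcal A_\eta$ and \eqref{eq:maincond} holds. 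Set $\hm\beta_0:=\hm\beta(0)$, $\hm y_0:=\hm y(0)$. From $\|\bs\psi_0-\hm Q(\hm\beta^{in},\hm y^{in})\|_{\mathcal H}\le\eta$ together with the smallness of $\|\hm\beta'\|_{L^1_t\cap L^\infty_t}+\|\hm y'-\hm\beta\|_{L^1_t\cap L^\infty_t}$, the pair $(\hm\beta_0,\hm y_0)$ satisfies the separation condition of Definition~\ref{def:sep} (with parameters comparable to $\delta,\rho$ once $\eta\ll\delta$), hence belongs to $\mathfrak S_{\delta,\rho}$ of Definition~\ref{def:sepfamily}, so Theorems~\ref{thm:manifold1} and \ref{thm:localmani} apply at the base point $\hm Q(\hm\beta_0,\hm y_0)$.

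Next I would check that this decomposition solves exactly the closed system defining the fixed point of $\mathscr F$. Repeating the computations of the proof of Theorem~\ref{thm:orbitasy3}: the equation for $\hm u$ is \eqref{eq:manieq1}, the frozen orthogonality gives \eqref{eq:manieq2}, the stable coefficients $a_j^-$ obey \eqref{eq:stablecs}, and, crucially, the a priori bound $\sup_t\|\hm u(t)\|_{\mathcal H}\lesssim\eta$ forces the runaway contribution $\propto\exp\!\big(\int_0^t\nu/\gamma_j\big)$ in the ODE for $a_j^+(t):=\langle\alpha^+_{\beta_j(t)}(\cdot-y_j(t)),\hm u(t)\rangle$ to vanish, so $a_j^+$ satisfies the backward stabilization identity \eqref{eq:manieq3}. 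Putting $\hm R_0:=\hm u(0)-\pi_+(0)\hm u(0)$, we have $\langle\alpha^1,\hm R_0\rangle=\langle\alpha^0,\hm R_0\rangle=\langle\alpha^+_{\beta_{0,j}}(\cdot-y_{0,j}),\hm R_0\rangle=0$ and $\|\hm R_0\|_{\mathcal H}\le\|\hm u(0)\|_{\mathcal H}\lesssim\eta$, i.e. $\hm R_0\in\mathcal N_L(\hm Q(\hm\beta_0,\hm y_0),\eta)$ (cf. \eqref{eq:stablelinear}), while $\hm u(0)=\hm R_0+(a_j^+(0))_j\cdot\hm{\mathcal Y^+}(\hm\beta_0,\hm y_0)$. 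Thus $(\hm\beta,\hm y,\hm u)$ is a fixed point of $\mathscr F$ for this value of $\hm R_0$; since it also lies in $\mathcal A_\eta\subset\mathcal A_{G,\eta}$ (the $L^1_t\cap L^\infty_t$ norms dominate the weighted $G$-norms) and $\mathscr F$ is a contraction on $\mathcal A_{G,\eta}$, it must coincide with the fixed point produced in Theorem~\ref{thm:manifold1}. In particular $\hm\Phi(\hm R_0)=(a_j^+(0))_j$, so
\[
\bs\psi(0)=\hm Q(\hm\beta_0,\hm y_0)+\hm R_0+\hm\Phi(\hm R_0)\cdot\hm{\mathcal Y^+}(\hm\beta_0,\hm y_0).
\]

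It then remains to match this with the parametrization of the local manifold in Theorem~\ref{thm:localmani}. Because $\hm u(0)=\hm R_0+(a_j^+(0))_j\cdot\hm{\mathcal Y^+}$ already satisfies the zero-mode orthogonality with respect to $\hm Q(\hm\beta_0,\hm y_0)$, uniqueness in the implicit function theorem forces the remodulation $\mathfrak M$ to return $\hm Q_{\hm R_0}=\hm Q(\hm\beta_0,\hm y_0)$ and $\mathfrak R(\hm R_0)=\hm R_0$; hence the displayed expression is precisely a point of $\mathcal N\big(\hm Q(\hm\beta_0,\hm y_0)\big)\subset\mathcal N$. Therefore $\bs\psi(0)\in\mathcal N$, and by the invariance of $\mathcal N$ for $t\ge0$ (Theorem~\ref{thm:globalmani}) we get $\bs\psi(t)\in\mathcal N$ for all $t\ge0$, which is the assertion.

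The main obstacle is the identification in the second paragraph: one must verify carefully that the \emph{forward} (orbital-stability) decomposition satisfies the \emph{same} closed system \eqref{eq:manieq1}--\eqref{eq:stablecs} — above all the backward identity \eqref{eq:manieq3} for the unstable modes, which hinges entirely on the uniform bound $\sup_t\|\hm u(t)\|_{\mathcal H}\lesssim\eta$ to annihilate the exponentially growing component (this is the one place where orbital stability is genuinely used) — and that it lives in the space on which the contraction of Theorem~\ref{thm:manifold1} has a unique fixed point, so that uniqueness can be invoked. Reconciling the two equivalent descriptions of $\mathcal N\big(\hm Q(\hm\beta_0,\hm y_0)\big)$ (the rigid one of Theorem~\ref{thm:manifold1} and the $6N$-parameter one of Theorem~\ref{thm:localmani}) via uniqueness of the implicit function theorem, and tracking that the modulated trajectory stays separated so that $(\hm\beta_0,\hm y_0)\in\mathfrak S_{\delta,\rho}$, are comparatively routine points.
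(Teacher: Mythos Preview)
Your proposal is correct and follows essentially the same route as the paper: modulate via Lemma~\ref{lem:mod}, use the uniform bound $\sup_t\|\hm u(t)\|_{\mathcal H}\lesssim\eta$ to force the backward stabilization identity \eqref{eq:manieq3} for $a_j^+$, and then identify the resulting triple $(\hm\beta,\hm y,\hm u)$ with the unique fixed point of the contraction $\mathscr F$ from Theorem~\ref{thm:manifold1} in $\mathcal A_{G,\eta}$, which pins down $\hm\Phi(\hm R_0)=(a_j^+(0))_j$ and places $\bs\psi(0)$ on $\mathcal N(\hm Q(\hm\beta_0,\hm y_0))\subset\mathcal N$. The paper phrases the last step slightly differently---it writes down two candidate initial data $\bs\psi(0)$ and $\tilde{\bs\psi}(0)$ and applies the contraction estimate directly to their difference---but this is the same uniqueness argument you invoke.
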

	\begin{proof}
		Using the modulation computations, the first part of Lemma \ref{lem:mod}, we can find Lipschitz functions  $\hm{y}(t)$ and $\hm{\beta}(t)$ such that
		\[
		\sup_{t\in\mathbb{R}^{+}}\left\Vert \boldsymbol{\psi}(t)-\boldsymbol{Q}(\hm{y}(t),\hm{\beta}(t))\right\Vert _{\mathcal{H}}\lesssim\eta
		\]
		and setting
		\begin{equation}
			\hm{u}(t):=\boldsymbol{\psi}(t)-\boldsymbol{Q}(\hm{y}(t),\hm{\beta}(t)),
		\end{equation}
		one has for $m=1,2,3$ and $j=1,\ldots,N$
		\begin{equation}
			\left\langle \alpha_{m,\beta_{j}(t)}^{1}(\cdot-y_{j}(t)),\hm{u}(t)\right\rangle =\left\langle \alpha_{m,\beta_{j}(t)}^{0}(\cdot-y_{j}(t),\hm{u}(t)\right\rangle =0
		\end{equation}
		and $\sup_{t\in\mathbb{R}^{+}}\left\Vert \boldsymbol{u}(t)\right\Vert _{\mathcal{H}}\lesssim\eta$.

		From the decomposition above and using the equation for $\bs \psi(t)$, we obtain
		\begin{align}
			\frac{d}{dt}\hm{u}(t) & =JH(t)\hm{u}(t)+\mathcal{I}(Q)+\mathfrak{I}_{1}(Q^{2},u)+\mathfrak{I}_{2}(Q,u^{2})+\hm{F}(u)\\
			& -\dot{\hm{\beta}}(t)\partial_{\hm{\beta}}\hm{Q}(\hm{\beta}(t),\hm{y}(t))-\left(\dot{\hm{y}}(t)-\hm{\beta}(t)\right)\partial_{\hm{y}}\hm{Q}(\hm{\beta}(t),\hm{y}(t))\nonumber\\
			& =:JH(t)\hm{u}(t)+\mathcal{I}(Q)+\mathfrak{I}(Q,u)+\hm{F}(u)+\text{Mod}'(t)\nabla_{M}\hm{Q}(\hm{\beta}(t),\hm{y}(t))\nonumber\\
			& =:JH(t)\hm{u}(t)+\mathcal{W}(x,t).\nonumber
		\end{align}
		From the notations from Definition \ref{def:sep-1} associated to the
		path $(\hm{\beta}(t),\hm{y}(t))$,  one can decompose $\hm{u}(t)$  as
		\[
		\hm{u}(t)=\pi_{c}(t)\hm{u}(t)+\text{\ensuremath{\pi_{+}}}(t)\hm{u}(t)+\pi_-(t)\hm{u}(t)
		\] where by construction, $\pi_0(t)\hm{u}(t)=0,\,\forall t.$
		
		Denote \begin{equation}
			a_{j}^{\pm}(t)=\left\langle \alpha_{\beta_{j}(t)}^{\pm}(\cdot-y_{j}(t),\hm{u}(t)\right\rangle.
		\end{equation}
		Since $\sup_{t\in\mathbb{R}^+}\left\Vert \hm{u}(t)\right\Vert _{\mathcal{H}}\lesssim\eta$, it gives the unique choice for the initial data for $a_j^{+}(0)$ and leads to the following equation:
		\begin{align}
			a_{j}^{+}(t) & =-\int_{t}^{\infty}\exp\left(\int_{s}^{t}\frac{\nu}{\gamma_{j}(\tau)}\,d\tau\right)\left\langle \alpha_{\beta_{j}(s)}^{+}(\cdot-y_{j}(s),\mathcal{W}(\cdot,s)\right\rangle \,ds\\
			& -\sum_{k\neq j}\int_{t}^{\infty}\exp\left(\int_{s}^{t}\frac{\nu}{\gamma_{j}(\tau)}\,d\tau\right)\left\langle \alpha_{\beta_{j}(s)}^{+}\big(\cdot-y_{j}(s)\big),\mathcal{V}_{\beta_{k}(s)}\big(\cdot-y_{k}(s)\big)\hm{u}(s)\right\rangle \,ds\nonumber\\
			& -\int_{t}^{\infty}\exp\left(\int_{s}^{t}\frac{\nu}{\gamma_{j}(\tau)}\,d\tau\right)\left\langle \left(y'_{j}(s)-\beta_{j}(s)\right)\cdot\nabla\alpha_{\beta_{j}(s)}^{\text{+}}(\cdot-y_{j}(s)),\hm{u}(s)\right\rangle \,ds\nonumber\\
			& -\int_{t}^{\infty}\exp\left(\int_{s}^{t}\frac{\nu}{\gamma_{j}(\tau)}\,d\tau\right)\left\langle \beta_{j}'(s)\partial_{\beta}\alpha_{\beta_{j}(s)}^{+}(\cdot-y_{j}(s)),\hm{u}(s)\right\rangle \,ds.\nonumber
		\end{align}
		For the stable modes, we have
		{\small\begin{align}
				a_{j}^{-}(t) & =a_{j}^{-}(0)\exp\left(-\int_{0}^{t}\frac{\nu}{\gamma_{j}(\tau)}\,d\tau\right)-\int_{0}^{t}\exp\left(-\int_{s}^{t}\frac{\nu}{\gamma_{j}(\tau)}\,d\tau\right)\left\langle \alpha_{\beta_{j}}^{-}(\cdot-\beta_{j}s-x_{j}),\mathcal{W}(\cdot,s)\right\rangle \,ds\label{eq:stablecsinv}\\
				& +\sum_{k\neq j}\int_{0}^{t}\exp\left(-\int_{s}^{t}\frac{\nu}{\gamma_{j}(\tau)}\,d\tau\right)\left\langle \alpha_{\beta_{j}}^{-}(\cdot-\beta_{j}s-x_{j}),\mathcal{V}_{\beta_{k}}\big(\cdot-\beta_{k}s-x_{k}\big)\hm{u}(s)\right\rangle \,ds\nonumber \\
				& +\int_{0}^{t}\exp\left(-\int_{s}^{t}\frac{\nu}{\gamma_{j}(\tau)}\,d\tau\right)\left\langle \left(y'_{j}(s)-\beta_{j}(s)\right)\cdot\nabla\alpha_{\beta_{j}(s)}^{-}(\cdot-y_{j}(s)),\hm{u}(s)\right\rangle \,ds\nonumber \\
				& +\int_{0}^{t}\exp\left(-\int_{s}^{t}\frac{\nu}{\gamma_{j}(\tau)}\,d\tau\right)\left\langle \beta_{j}'(s)\partial_{\beta}\alpha_{\beta_{j}(s)}^{-}(\cdot-y_{j}(s)),\hm{u}(s)\right\rangle \,ds.\nonumber 
		\end{align}}
		By the same argument as the orbital stability implying the asymptotic stability, we can write
		\begin{equation}
			\boldsymbol{\psi}(0)=\boldsymbol{Q}(\hm{y}(0),\hm{\beta}(0))+\hm{R}_0+\tilde{\hm{\Phi}}\cdot\hm{\mathcal{Y}}^+(\hm{y}(0),\hm{\beta}(0))
		\end{equation}for $\hm{R}_0\in\mathcal{N}_{L}\left(\boldsymbol{Q}(\hm{y}(0),\hm{\beta}(0)),
	\eta\right)$.
		We can also define
		\begin{equation}
			\title{\boldsymbol{\psi}}(0)=\boldsymbol{Q}(\hm{y}(0),\hm{\beta}(0))+\hm{R}_0+\hm{\Phi}\left(\hm{R}_0;\boldsymbol{Q}(\hm{y}(0),\hm{\beta}(0))\right)\cdot\hm{\mathcal{Y}}^+(\hm{y}(0),\hm{\beta}(0))
		\end{equation} where $\hm{\Phi}\left(\hm{R}_0;\boldsymbol{Q}(\hm{y}(0),\hm{\beta}(0))\right)$ is constructed via Theorem \ref{thm:manifold1}.  By construction, this initial data produces a solution $\tilde{\hm{\Psi}}(t)\in\mathcal{N}$ which scatters to the multi-soliton family.
		
		Notice that both $\large(\tilde{\hm{\beta}}(t), \tilde{\hm{y}}(t),\tilde{\hm{\Psi}}(t)\Large)$ and $\large(\hm{\beta}(t), \hm{y}(t),\hm{\Psi}(t)\Large)$ are fixed points of the map \eqref{eq:inputoutput}. 
		By the same argument as the contraction argument in Theorem \ref{thm:manifold1} and using notations there, we conclude that
		\begin{align}
			\left\Vert \left(\tilde{\hm{\beta}}(\cdot)-\hm{\beta}(\cdot),\tilde{\hm{y}}(\cdot)-\hm{y}(\cdot),\tilde{\hm{\Psi}}(\cdot)-\hm{\Psi}(\cdot)\right)\right\Vert _{\mathcal{A}_{G,\eta}}+\left|\tilde{\hm{\Phi}}-\hm{\Phi}\left(\hm{R}_0;\boldsymbol{Q}(\hm{y}(0),\hm{\beta}(0))\right)\right|\\\lesssim \frac{1}{\kappa^{2}}\big(B\eta+\frac{1}{\delta} e^{-(\frac{ \rho}{2})}\big)\left\Vert \left(\tilde{\hm{\beta}}(\cdot)-\hm{\beta}(\cdot),\tilde{\hm{y}}(\cdot)-\hm{y}(\cdot),\tilde{\hm{\Psi}}(\cdot)-\hm{\Psi}(\cdot)\right)\right\Vert _{\mathcal{A}_{G,\eta}}.
		\end{align}
		Taking $\eta$ small and $\rho$ large, one has
		\begin{equation}
			\left(\tilde{\hm{\beta}}(\cdot),\tilde{\hm{y}}(\cdot),\tilde{\hm{\Psi}}(\cdot)\right)=\left(\hm{\beta}(\cdot),\hm{y}(\cdot),\hm{\Psi}(\cdot)\right)
		\end{equation}which implies $\hm{\Psi}(t)\in\mathcal{N}$ as desired.
	\end{proof}

	\section{Classification of pure multi-solitons}\label{sec:cla}
	In this section, we analyze pure multi-solitons which are special solutions scattering the well-separated soliton family with zero radiation fields.  Due to the exponential interaction among solitons, one should expect a better behavior of the error term. With Strichartz estimates, it will be shown that actually the error term decays exponentially and modulation parameters have refined information.  
	\begin{thm}\label{thm:exp}
		Suppose $\hm{\psi}(t)$ is a pure multi-soliton in the sense of Definition \ref{def:puremulti}. Then actually there exists $\hm{x}_0\in\mathbb{R}^{3N}$ such that one has
		\begin{equation}\label{eq:expconv}
			\text{\text{\ensuremath{\left\Vert \boldsymbol{\psi}(t)-\boldsymbol{Q}(\hm{\beta},\hm{\beta}t+\hm{x}_{0})\right\Vert _{\mathcal{H}}}}\ensuremath{\lesssim e^{-\rho t}}}
		\end{equation}
		for small $\rho>0$. 
	\end{thm}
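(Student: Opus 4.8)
The plan is to reduce to the framework of Section~\ref{sec:manifold}, where the radiation already tends to zero in $\cH$, and then to rerun the bootstrap of that section with an extra exponential weight $e^{\rho_0 t}$, before integrating the modulation equations to produce $\hm x_0$. \emph{Step 1 (modulation with vanishing remainder).} Since $\hm\psi$ is a pure multi-soliton with distinct velocities $\beta_j$, the convergence $\|\hm\psi(t)-\hm Q(\hm\beta,\hm y_p(t))\|_\cH\to0$ first forces the centres to separate linearly, $y_{p,j}(t)=\beta_j t+O(1)$, so for large $t$ the trajectories satisfy the separation condition of Definition~\ref{def:sep-1} with $\delta:=\tfrac13\min_{j\ne k}|\beta_j-\beta_k|$ and some $\rho$. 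Translating time so that the distance to the family is as small as we wish on $[0,\infty)$, I would apply Lemma~\ref{lem:mod} to obtain Lipschitz paths $\hm\beta(t),\hm y(t)$ and the decomposition $\hm u(t):=\hm\psi(t)-\hm Q(\hm\beta(t),\hm y(t))$ obeying the orthogonality conditions \eqref{eq:modconds}, $\pi_0(t)\hm u(t)\equiv0$, and $\|\hm u(t)\|_\cH\to0$. By Theorem~\ref{thm:orbitasy3} the solution scatters to the family with some $\hm\psi_+\in\cH$; since $\|\hm u(t)\|_\cH\to0$ while $\|e^{JH_0t}\hm\psi_+\|_\cH$ is conserved, necessarily $\hm\psi_+=0$. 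In particular the a priori bound \eqref{eq:conclude} gives $\|\hm\beta'\|_{L^1_t\cap L^\infty_t}+\|\hm y'-\hm\beta\|_{L^1_t\cap L^\infty_t}+\|\hm u\|_{\cS_{\cH}}<\infty$, the modulated velocity converges with $\hm\beta(t)\to\hm\beta$ (matching the prescribed $\hm\beta$ by uniqueness of the soliton decomposition), and the unstable coefficients $a_j^+(t)$ satisfy the stabilized identity \eqref{eq:aorbit}.

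\emph{Step 2 (exponential bootstrap).} I would fix $\rho_0>0$ so small that $e^{\rho_0 t}$ stays well below the stable/unstable rate $\nu\sqrt{1-\max_j|\beta_j|^2}$ and below the interaction rate $c\delta$, where $c>0$ is the constant in the bounds $\|\cI(Q)(t)\|_{\cS^*_{\cH}}+\|\mathfrak I_1(Q^2,u)(t)\|_{\cS^*_{\cH}}\lesssim e^{-c\delta t}(1+\|\hm u(t)\|_\cH)$ coming from the separation of trajectories. Pick $T$ large so that $\sup_{t\ge T}\|\hm u(t)\|_\cH\le\epsilon$, work on $[T,T']$ with the truncated weight $w(t):=e^{\rho_0\min(t,T')}$ (so that all weighted norms below are a priori finite), and set
\[
X:=\|w\,\hm u\|_{\cS_{\cH}[T,T']}+\|w\,(|\hm\beta'|+|\hm y'-\hm\beta|)\|_{L^1_t\cap L^\infty_t[T,T']}+\sum_{j=1}^N\|w\,a_j^{\pm}\|_{L^1_t\cap L^\infty_t[T,T']}.
\]
The central part $\pi_c(t)\hm u(t)$ is controlled by Theorem~\ref{thm:mainthmlinear} applied to $v=w\,u$, which solves the same equation with the harmless bounded perturbation $+\rho_0$ and source $w\cW$; the coefficients $a_j^{\pm}$ by the integral formulae \eqref{eq:aorbit}, \eqref{eq:stable}, whose weighted kernels $w(t)w(s)^{-1}\exp(\pm\int_s^t\tfrac{\nu}{\gamma_j(\tau)}\,d\tau)$ stay exponentially decaying and integrable by the choice of $\rho_0$; and $\hm\beta',\hm y'-\hm\beta$ by \eqref{eq:mod1lem}, \eqref{eq:mod2lem} with $e^{-\kappa/2}$ replaced by $e^{-c\delta t}$. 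In each of these the quadratic/cubic terms $\mathfrak I_2(Q,u^2)$, $\hm F(u)$ and the quadratic part of $\hm q(t)$ pick up an extra factor $\lesssim\epsilon$ on $[T,\infty)$, while the genuinely linear source $\cI(Q)+\mathfrak I_1(Q^2,u)+\text{Mod}'(t)\nabla_M\hm Q$ contributes $\lesssim e^{-(c\delta-\rho_0)T}+\epsilon X$. This gives $X\le C_0\big(e^{\rho_0 T}\|\hm u(T)\|_\cH+e^{-(c\delta-\rho_0)T}\big)+C_0\epsilon X$; taking $\epsilon$ small absorbs the last term, and since the bound is uniform in $T'$, letting $T'\to\infty$ yields $\|e^{\rho_0 t}\hm u\|_{\cS_{\cH}[T,\infty)}<\infty$, hence $\|\hm u(t)\|_\cH\lesssim e^{-\rho_0 t}$ and $|\hm\beta'(t)|+|\hm y'(t)-\hm\beta(t)|\lesssim e^{-\rho_0 t}$.

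\emph{Step 3 (identifying $\hm x_0$).} Integrating $\hm\beta'$ and $\hm y'-\hm\beta$ from $t$ to infinity produces $\hm x_0\in\mathbb R^{3N}$ with $|\hm\beta(t)-\hm\beta|\lesssim e^{-\rho_0 t}$ and $|\hm y(t)-\hm\beta t-\hm x_0|\lesssim e^{-\rho_0 t}$. Since $\partial_{\hm\beta}\hm Q$ and $\partial_{\hm y}\hm Q$ are bounded maps into $\cH$ on the relevant range of parameters, $\|\hm Q(\hm\beta(t),\hm y(t))-\hm Q(\hm\beta,\hm\beta t+\hm x_0)\|_\cH\lesssim e^{-\rho_0 t}$, and combining with $\|\hm u(t)\|_\cH\lesssim e^{-\rho_0 t}$ gives \eqref{eq:expconv}; moreover $\rho_0$ depends only on $\nu$, $\max_j|\beta_j|$ and $\min_{j\ne k}|\beta_j-\beta_k|$, hence not on $\hm\psi$.

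The hard part will be Step~2: one must fit $\rho_0$ simultaneously below the spectral gap $\nu\sqrt{1-\max_j|\beta_j|^2}$ (so that the weighted Strichartz estimate for the central part and the forward/backward Duhamel integrals for the stable and unstable coefficients survive) and below the interaction decay rate $c\delta$, and one must run the contraction/bootstrap with the truncated weight $w$ to keep all norms finite before passing to the limit $T'\to\infty$. A secondary technical point is the very first reduction — upgrading the hypothesis $|y_{p,j}-y_{p,k}|\ge L$ to the linear separation $\ge\delta t+\rho$ needed to invoke Lemma~\ref{lem:mod} and Theorem~\ref{thm:mainthmlinear} — which rests on first establishing that $\hm y(t)-\hm\beta t$ stays bounded and $\hm\beta(t)\to\hm\beta$.
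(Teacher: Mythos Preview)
Your overall strategy---modulate, bootstrap to get exponential decay of $\hm u$, then integrate the modulation equations---is correct and matches the paper. Step~1 and Step~3 are fine (and your Step~1 is in fact more careful than the paper about the initial separation reduction). The issue is the execution of Step~2.

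The claim that $v:=w\hm u$ ``solves the same equation with the harmless bounded perturbation $+\rho_0$'' does not close the Strichartz estimate. The equation for $v$ is $\partial_t v=JH(t)v+\rho_0 v+w\cW$, and the term $\rho_0 v$ is precisely the anti-damping that undoes the weight (indeed $e^{-\rho_0 t}v=\hm u$ recovers the original equation). Treating $\rho_0 v$ as a source in $\cS^*_{\cH}$ would require control of $\|v\|_{L^1_tL^2_x}$ by $\|v\|_{\cS_{\cH}}$, which fails uniformly in $T'$; so your weighted bootstrap, as written, does not close for the central part. Your treatment of $a_j^{\pm}$ via the weighted kernels is fine, since those are just ODEs, but the dispersive part is the problem.

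The paper sidesteps weights entirely. It applies the bootstrap of Theorem~\ref{thm:orbitasy3} on $[t,\infty)$ for each fixed $t\ge T_{\hm\psi}$: since $\|\hm u(T)\|_{\cH}\to0$, one takes data at a large time $T$ and sends $T\to\infty$, so the data contribution disappears and the only genuine source left (after absorbing quadratic and cubic terms using the already-known smallness of $\|\hm u\|_{\cS_{\cH}[t,\infty)}$) is the soliton interaction $\cI(Q)$, with $\|\cI(Q)\|_{\cS^*_{\cH}[t,\infty)}\lesssim e^{-\delta t/2}$. This yields $\|\hm u\|_{\cS_{\cH}[t,\infty)}\lesssim e^{-\delta t/2}$ directly, with no weight. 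The same device reappears in Theorem~\ref{thm:puremanifold}, where the ``weighted'' norm is \emph{defined} as $\sup_{t}e^{\varrho t}\|\cdot\|_{\cS_{\cH}[t,\infty)}$---i.e.\ exactly the moving-interval Strichartz bound---rather than the Strichartz norm of $e^{\varrho t}\hm u$ that you attempt. If you want to salvage your weighted formulation, redefining $X$ as $\sup_{t\in[T,T']}e^{\rho_0 t}\|\hm u\|_{\cS_{\cH}[t,T']}$ and applying Theorem~\ref{thm:mainthmlinear} on each $[t,T']$ would work and is essentially the paper's argument.
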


	\begin{rem}
		Using the definition from Chen-Jendrej \cite{CJ3}, the theorem above
		tells that every pure multi-soliton solution is an exponential multi-soliton
		solution.
	\end{rem}
	
	\begin{proof}
		Denote $\delta:=\inf_{i\neq k}|\beta_j-\beta_k|$. Given a solution $\hm{\psi}$ satisfying \eqref{eq:puremultidef}, by construction,
		for any $\eta>0$, there exits $T_{\bs \psi}$ such that
		\begin{equation}
			\sup_{t\geq T_{\bs \psi}} {\left\Vert \boldsymbol{\psi}(t)-\boldsymbol{Q}(\hm{\beta},\hm{y}_{p}(t))\right\Vert _{\mathcal{H}}}\leq \eta.
		\end{equation} Now we fix a $\eta\ll1$ small enough depending on $\delta$ as in Theorem \ref{thm:orbitasy3}.

		For $t\geq T_{\hm{\psi}}$, by the
		modulation analysis, Lemma \eqref{lem:mod}, we can find a set of modulation parameters $(\hm{\beta}(t),\hm{y}(t))$
		such that one can decompose the original solution as
		\[
		\boldsymbol{\psi}(t)=\boldsymbol{Q}(\hm{\beta}(t),\hm{y}(t))+\boldsymbol{u}(t)
		\]
		with $\boldsymbol{u}$ satisfies orthogonality conditions
		\[
		\left\langle \alpha_{m,\beta_{j}(t)}^{0}(\cdot-y_{j}(t)),\hm{u}(t)\right\rangle =\left\langle \alpha_{m,\beta_{j}(t)}^{1}(\cdot-y_{j}(t)),\hm{u}(t)\right\rangle =0
		\]
		for $m=1,2,3$ and $j=1,\ldots N$. By construction and the implicit function theorem, one also has
		\begin{equation}
			|y_j(t)-y_k(t)|\geq \frac{L}{2}
		\end{equation}where $L$ is from Definition \ref{def:puremulti}.

		Using the equation for $\hm{\psi}(t)$, the equation for $\hm{u}(t)$ is again given by
		\begin{align*}
			\frac{d}{dt}\hm{u}(t) & =JH(t)\hm{u}(t)+\mathcal{I}(Q)+\mathfrak{I}_{1}(Q^{2},u)+\mathfrak{I}_{2}(Q,u^{2})+\hm{F}(u)\\
			& -\dot{\hm{\beta}}(t)\partial_{\hm{\beta}}\hm{Q}(\hm{\beta}(t),\hm{y}(t))-\left(\dot{\hm{y}}(t)-\hm{\beta}(t)\right)\partial_{\hm{y}}\hm{Q}(\hm{\beta}(t),\hm{y}(t))\\
			& =:JH(t)\hm{u}(t)+\mathcal{I}(Q)+\mathfrak{I}(Q,u)+\hm{F}(u)+\text{Mod}'(t)\nabla_{M}\hm{Q}(\hm{\beta}(t),\hm{y}(t))\\
			& =:JH(t)\hm{u}(t)+\mathcal{W}(x,t).
		\end{align*}
		Following a similar argument to Theorem \ref{thm:orbitasy3}, we write
		\[
		\hm{u}(t)=\pi_{+}(t)\hm{u}(t)+\pi_{-}(t)\hm{u}(t)
		+\text{\ensuremath{\pi_{c}}}(t)\hm{u}(t).
		\]
		Since $\left\Vert \hm{u}(t)\right\Vert _{\mathcal{H}}\rightarrow0$,
		one still has the stabilization conditions
		\begin{align*}
			a_{j}^{+}(t) & =-\int_{t}^{\infty}\exp\left(\int_{s}^{t}\frac{\nu}{\gamma_{j}(\tau)}\,d\tau\right)\left\langle \alpha_{\beta_{j}(s)}^{+}(\cdot-y_{j}(s),\mathcal{W}(\cdot,s)\right\rangle \,ds\\
			& -\sum_{k\neq j}\int_{t}^{\infty}\exp\left(\int_{s}^{t}\frac{\nu}{\gamma_{j}(\tau)}\,d\tau\right)\left\langle \alpha_{\beta_{j}(s)}^{+}\big(\cdot-y_{j}(s)\big),\mathcal{V}_{\beta_{k}(s)}\big(\cdot-y_{k}(s)\big)\hm{u}(s)\right\rangle \,ds\\
			& -\int_{t}^{\infty}\exp\left(\int_{s}^{t}\frac{\nu}{\gamma_{j}(\tau)}\,d\tau\right)\left\langle \left(y'_{j}(s)-\beta_{j}(s)\right)\cdot\nabla\alpha_{\beta_{j}(s)}^{\text{+}}(\cdot-y_{j}(s)),\hm{u}(s)\right\rangle \,ds\\
			& -\int_{t}^{\infty}\exp\left(\int_{s}^{t}\frac{\nu}{\gamma_{j}(\tau)}\,d\tau\right)\left\langle \beta_{j}'(s)\partial_{\beta}\alpha_{\beta_{j}(s)}^{+}(\cdot-y_{j}(s)),\hm{u}(s)\right\rangle \,ds.
		\end{align*}
		Again, for the stable modes, we have
		{\small\begin{align}
				a_{j}^{-}(t) & =a_{j}^{-}(0)\exp\left(-\int_{0}^{t}\frac{\nu}{\gamma_{j}(\tau)}\,d\tau\right)-\int_{0}^{t}\exp\left(-\int_{s}^{t}\frac{\nu}{\gamma_{j}(\tau)}\,d\tau\right)\left\langle \alpha_{\beta_{j}}^{-}(\cdot-\beta_{j}s-x_{j}),\mathcal{W}(\cdot,s)\right\rangle \,ds\label{eq:stableclass}\\
				& +\sum_{k\neq j}\int_{0}^{t}\exp\left(-\int_{s}^{t}\frac{\nu}{\gamma_{j}(\tau)}\,d\tau\right)\left\langle \alpha_{\beta_{j}}^{-}(\cdot-\beta_{j}s-x_{j}),\mathcal{V}_{\beta_{k}}\big(\cdot-\beta_{k}s-x_{k}\big)\hm{u}(s)\right\rangle \,ds\nonumber \\
				& +\int_{0}^{t}\exp\left(-\int_{s}^{t}\frac{\nu}{\gamma_{j}(\tau)}\,d\tau\right)\left\langle \left(y'_{j}(s)-\beta_{j}(s)\right)\cdot\nabla\alpha_{\beta_{j}(s)}^{-}(\cdot-y_{j}(s)),\hm{u}(s)\right\rangle \,ds\nonumber \\
				& +\int_{0}^{t}\exp\left(-\int_{s}^{t}\frac{\nu}{\gamma_{j}(\tau)}\,d\tau\right)\left\langle \beta_{j}'(s)\partial_{\beta}\alpha_{\beta_{j}(s)}^{-}(\cdot-y_{j}(s)),\hm{u}(s)\right\rangle \,ds.\nonumber 
		\end{align}}By the same argument as Theorem \ref{thm:orbitasy3}, the unique solution $\big(\hm{\beta (t),\hm{y}(t),\hm{u}(t)}\big)$ satisfies the following estimates: $\forall t\geq T_{\bs \psi}$, one has 
		\[
		\left\Vert \hm{\beta}'\right\Vert _{L_{t}^{1}\bigcap L_{t}^{\infty}[t,\infty]}+\left\Vert \hm{y}'-\hm{\beta}\right\Vert _{L_{t}^{1}\bigcap L_{t}^{\infty}[t,\infty]}\lesssim e^{-\frac{\delta}{2}t}+\left\Vert \hm{u}\right\Vert _{\mathcal{S}_{\mathcal{H}}[t,\infty]}^{2}
		\]
		from the modulation equations, see Lemma \ref{lem:mod} and
		\begin{equation}
			\left\Vert\hm{u}\right\Vert _{\mathcal{S}_{\mathcal{H}}[t,\infty]}\lesssim\left\Vert \mathcal{I}(Q)\right\Vert _{\mathcal{S}_{\mathcal{H}}[t,\infty]}\lesssim e^{-\frac{\delta}{2}t}.\label{eq:centerdecay}
		\end{equation} by Strichartz estimates
		where we absorbed higher order terms to the LHR due to the smallness
		of $\left\Vert \hm{u}\right\Vert _{\mathcal{S}_{\mathcal{H}}}$.
		The exponential term $ e^{-\frac{\delta}{2}t}$ is from the interaction of solitons
		$\left\Vert \mathcal{I}(Q)\right\Vert _{\mathcal{S}^{*}_{\mathcal{H}}[t,\infty]}\lesssim  e^{-\frac{\delta}{2}t}$.

		From the
		modulation equation, we get
		\[
		\left|\beta_{j}(t)-\beta_{j}\right|\lesssim\frac{1}{2\rho}e^{-2\rho t}
		\]
		and
		\[
		\left|y'_{j}(t)-\beta_{j}(t)\right|\lesssim e^{-2\rho t}.
		\]
		It follows that
		\[
		\left|(y_{j}(t)-\beta_{j}t)'\right|=\left|y_{j}'(t)-\beta_{j}(t)+\beta_{j}(t)-\beta_{j}\right|\lesssim\frac{1}{2\rho}e^{-2\rho t}.
		\]
		Therefore, one can find $x_j\in\mathbb{R}^3$ such that
		\[
		\left|y_{j}(t)-\beta_{j}t-x_{j}\right|\lesssim\left(\frac{1}{2\rho}\right)^{2}e^{-2\rho t}.
		\]
		Hence, we obtain
		\begin{align*}
			\text{\ensuremath{\left\Vert \boldsymbol{\psi}(t)-\boldsymbol{Q}(\hm{\beta},\hm{\beta}t+\hm{x}_{0})\right\Vert _{\mathcal{H}}}=\ensuremath{\left\Vert \boldsymbol{\psi}(t)-\boldsymbol{Q}(\hm{y}(t),\hm{\beta}(t))+\boldsymbol{Q}(\hm{y}(t),\hm{\beta}(t))-\boldsymbol{Q}(\hm{\beta},\hm{\beta}t+\hm{x}_{0})\right\Vert _{\mathcal{H}}}}\\
			\left\Vert \hm{u}(t)\right\Vert _{\mathcal{H}}+\left\Vert \boldsymbol{Q}(\hm{y}(t),\hm{\beta}(t))-\boldsymbol{Q}(\hm{\beta},\hm{\beta}t+\hm{x}_{0})\right\Vert _{\mathcal{H}}\ensuremath{\lesssim e^{-\rho t}}
		\end{align*}
		as claimed. Note that here $\rho$ is independent of $\hm{u}$. One
		concludes that every pure multi-soliton is an exponential
		multi-soliton.

	\end{proof}
	Recall that  for any  given $|\beta_{j}|<1$ such that $\beta_{j}\neq\beta_{k}$
	for $j\neq k$, and arbitrary $x_{j}\in\mathbb{R}^{3}$, there exists
	a pure multi-soliton $\psi$ to \eqref{eq:nkg} such that
	\begin{equation}
		\hm{\psi}\rightarrow\boldsymbol{Q}(\hm{\beta},\hm{\beta}t+\hm{x}_{0})=\sum_{j=1}^{N}\boldsymbol{Q}_{\beta_{j}}\left(x-\beta_{j}t-x_{j}\right),\,\text{as}\,t\rightarrow\infty.\label{eq:asyMuti}
	\end{equation}
	The existence of the solution above was shown by C\^ote-Mu\~noz \cite{CMu} where a weak convergence and Brouwer's fixed point theorem are applied. 
	From Theorem \ref{thm:exp}, we conclude that every pure multi-soliton in the sense of Definition \ref{def:puremulti} actually converges $\boldsymbol{Q}(\hm{\beta},\hm{\beta}t+\hm{x}_{0})$ exponentially.
	
	With Strichartz estimates  in hand, the remaining part  of this section is to
	classify all pure multi-solitons in the energy space and find all solution $\bs \psi $ satisfying \eqref{eq:expconv}.   We will see later on, our analysis can give an alternative proof of the existence of pure multi-solitons via the Picard iteration.

	The analysis here be in spirit of our work \cite{CJ3} where we obtained the uniqueness of exponential multi-kink via energy estimates.  From the theorem above, we already know that every pure multi-soliton is an exponential multi-soliton. So it suffices for us to do the classification analysis in this class.  But we can not use energy estimates in general since not all subcritical nonlinear terms  can have  Lipschitz estimates in the energy norm in $3$d, and for this reason, we choose to use weighted Strichartz estimates. (In $1$d, via Sobolev's embedding, the nonlinear term is Lipschitz using energy estimates.)
	
	\begin{thm}\label{thm:puremanifold}
		For fixed $\hm{\beta}=\text{(\ensuremath{\beta_{1},\ldots,\beta_{N})} \ensuremath{\in\mathbb{R}^{3N}}}$
		satisfy $|\beta_{j}|<1$ and $\beta_{j}\neq\beta_{k}$ for $j\neq k$
		and any given $\hm{x}_{0}=(x_{1},\ldots,x_{N})\in\mathbb{R}^{3N}$, the set
		of solution $\hm{\psi}$ satisfying \eqref{eq:expconv} is a dimension
		$N$ Lipschitz manifold.
	\end{thm}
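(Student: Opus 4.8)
The plan is to parametrise the pure multi-solitons with prescribed asymptotic profile $\bs Q(\hm{\beta},\hm{\beta}t+\hm{x}_0)$ by the $N$ amplitudes of the stable internal modes, one per soliton, via a backward‑in‑time fixed point argument in exponentially weighted spaces, in the spirit of the construction of $\mathcal N$ in Theorem \ref{thm:manifold1} but with the radiation forced to be zero. As a preliminary reduction, Theorem \ref{thm:exp} shows that every solution satisfying \eqref{eq:expconv} converges to $\bs Q(\hm{\beta},\hm{\beta}t+\hm{x}_0)$ exponentially; since $\hm{\beta}$ has distinct entries and $\hm{x}_0$ is fixed, there is $T_0=T_0(\hm{\beta},\hm{x}_0,\eta)$ such that the separation condition of Definition \ref{def:sep-1} holds on $[T_0,\infty)$ with fixed parameters $\delta,\rho$ and $\sup_{t\ge T_0}\|\bs\psi(t)-\bs Q(\hm{\beta},\hm{\beta}t+\hm{x}_0)\|_{\cH}\le\eta$. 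Fixing $T\ge T_0$ and applying Lemma \ref{lem:mod} on $[T,\infty)$, one writes $\bs\psi(t)=\bs Q(\hm{\beta}(t),\hm{y}(t))+\hm{u}(t)$ with $\pi_0(t)\hm{u}(t)=0$, so $\hm{u}=\pi_c(t)\hm{u}+\pi_+(t)\hm{u}+\pi_-(t)\hm{u}$ and $a_j^\pm(t)=\langle\alpha^\pm_{\beta_j(t)}(\cdot-y_j(t)),\hm{u}(t)\rangle$, with the profile constraint $\hm{\beta}(t)\to\hm{\beta}$, $\hm{y}(t)-\hm{\beta}t\to\hm{x}_0$.

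The core step is the fixed point. For each small $\hm{b}=(b_1,\dots,b_N)\in\bR^N$ I would solve the $\hm{u}$‑equation \eqref{eq:uorbit} coupled to the modulation equations \eqref{eq:modconds} subject to: (i) the modulation parameters are recovered by integrating their ODEs from $t=\infty$ with terminal values $\hm{\beta}(t)\to\hm{\beta}$ and $\hm{y}(t)-\hm{\beta}t\to\hm{x}_0$ — this uses up all $6N$ integration constants, leaving no translation or velocity freedom; (ii) the unstable modes are stabilised, $a_j^+(t)=-\int_t^\infty\exp\!\big(\int_s^t\tfrac{\nu}{\gamma_j(\tau)}\,d\tau\big)\langle\alpha^+_{\beta_j(s)}(\cdot-y_j(s)),\mathcal W(\cdot,s)\rangle\,ds+\cdots$ as in \eqref{eq:manieq3}; (iii) the dispersive part carries zero radiation, $\pi_c(t)\hm{u}(t)=-\int_t^\infty\mathcal T(t,s)\pi_c(s)\mathcal W(\cdot,s)\,ds$, the integral converging because $\mathcal I(Q)$ and the remaining source terms decay like $e^{-\delta t/2}$; and (iv) the stable modes carry the prescribed data, $a_j^-(T)=b_j$, so $a_j^-(t)$ is the decaying solution of its scalar ODE. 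I would run this as a contraction in the exponentially weighted set
\[
\Big\{(\hm{\beta},\hm{y},\hm{u}):\ \big\|e^{\rho(\cdot)}\hm{\beta}'\big\|_{L^1_t\cap L^\infty_t}+\big\|e^{\rho(\cdot)}(\hm{y}'-\hm{\beta})\big\|_{L^1_t\cap L^\infty_t}+\big\|e^{\rho(\cdot)}\hm{u}\big\|^2_{\mathcal{S}_{\mathcal{H}}[T,\infty)}\le A\eta^2\Big\},\qquad 0<\rho\ll1,
\]
using the weighted Strichartz estimates that follow from Theorem \ref{thm:mainthmlinear} together with the exponential decay of the stabilisation and zero‑radiation kernels; this forces $\rho<\min\big(\tfrac{\delta}{2},\ \nu\sqrt{1-\max_j|\beta_j|^2}\big)$. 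For $\eta$ small and $T$ large this yields a unique fixed point $(\hm{\beta}_{\hm{b}},\hm{y}_{\hm{b}},\hm{u}_{\hm{b}})$, hence a unique solution $\bs\psi_{\hm{b}}$, and, estimating differences as in the contraction step of Theorem \ref{thm:manifold1} with a weaker weighted norm, the map $\hm{b}\mapsto\bs\psi_{\hm{b}}(T)\in\cH$ is Lipschitz.

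It then remains to show that this family is exactly the set in question and that the parametrisation is bi‑Lipschitz. By construction each $\bs\psi_{\hm{b}}$ satisfies \eqref{eq:expconv}. Conversely, given any $\bs\psi$ satisfying \eqref{eq:expconv}, on $[\max(T,T_{\bs\psi}),\infty)$ the decomposition above together with $\hm{u}(t)\to0$ forces precisely conditions (i)–(iv) with $b_j:=a_j^-(T)$ — the stabilisation of $a_j^+$ because $\hm{u}$ stays bounded, the zero‑radiation identity because $\bs\psi$ is pure — so by uniqueness of the fixed point $\bs\psi=\bs\psi_{\hm{b}}$ (this mirrors Corollary \ref{cor:orbitalmanifold}). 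For the bi‑Lipschitz claim, $\hm{u}_{\hm{b}}(T)-\hm{u}_{\hm{b}'}(T)=\sum_{j=1}^N(b_j-b_j')\,\mathcal Y_j^-(T)+\hm{r}$, where $\hm{r}$ gathers the differences of the stabilised, zero‑radiation and modulation pieces and has size $\lesssim(\eta+\tfrac1\delta e^{-\rho})|\hm{b}-\hm{b}'|$; since the $\mathcal Y_j^-(T)$ are linearly independent in $\cH$ and $\bs Q(\hm{\beta}_{\hm{b}}(T),\hm{y}_{\hm{b}}(T))-\bs Q(\hm{\beta}_{\hm{b}'}(T),\hm{y}_{\hm{b}'}(T))$ is likewise $\lesssim(\eta+\tfrac1\delta e^{-\rho})|\hm{b}-\hm{b}'|$, one obtains $|\hm{b}-\hm{b}'|\lesssim\|\bs\psi_{\hm{b}}(T)-\bs\psi_{\hm{b}'}(T)\|_{\cH}\lesssim|\hm{b}-\hm{b}'|$. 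Hence $\hm{b}\mapsto\bs\psi_{\hm{b}}(T)$ is a bi‑Lipschitz homeomorphism of a ball in $\bR^N$ onto an $N$‑dimensional Lipschitz submanifold of $\cH$; transporting along the locally bi‑Lipschitz solution flow identifies the set of solutions satisfying \eqref{eq:expconv} with this submanifold.

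The hard part, beyond the lengthy but routine bookkeeping of the three spectral components in the weighted norms (and re‑deriving the Strichartz bounds with the $e^{\rho t}$ weight), is to arrange the fixed point so that the central component is simultaneously \emph{pinned down} by the backward‑Duhamel zero‑radiation identity and \emph{estimated} by the weighted Strichartz bounds — which is exactly what constrains $\rho$ to be compatible with both the $e^{-\delta t/2}$ decay of the soliton interaction and the stable/unstable kernels $e^{\mp\nu(t-s)/\gamma_j}$ — together with the lower Lipschitz bound, i.e. verifying that switching on the stable amplitudes genuinely produces an $N$‑parameter family and thus pins the dimension at exactly $N$.
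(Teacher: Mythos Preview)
Your proposal is correct and would go through, but it takes a somewhat different route from the paper's own proof. The key divergence is that you work with the \emph{modulated} decomposition $\bs\psi(t)=\bs Q(\hm\beta(t),\hm y(t))+\hm u(t)$ with $\pi_0(t)\hm u(t)=0$, carrying the $6N$ modulation unknowns through the fixed point and pinning them by terminal conditions $\hm\beta(t)\to\hm\beta$, $\hm y(t)-\hm\beta t\to\hm x_0$. The paper instead exploits that the asymptotic profile is already known exactly and decomposes around the \emph{fixed linear} trajectories, $\bs\psi(t)=\bs Q(\hm\beta,\hm\beta t+\hm x_0)+\hm v(t)$, with no modulation at all. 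The zero modes of $\hm v$ are then not removed by orthogonality but controlled directly: one differentiates $\langle\alpha^{0}_{m,\beta_j}(\cdot-\beta_jt-x_{0,j}),\hm v(t)\rangle$ and $\langle\alpha^{1}_{m,\beta_j}(\cdot-\beta_jt-x_{0,j}),\hm v(t)\rangle$, observes that the sources are only cross-potential terms of size $e^{-\delta t/2}$ plus $\hm f$, and integrates from $t=\infty$ (the $\alpha^1$ mode picking up an extra harmless $\alpha^0$ source from the generalized kernel structure). This reduces the nonlinear problem to a single unknown $\hm v$ and a clean linear claim: for $\hm f\in GS^*_{\varrho+\varpi}$ and prescribed $(a_j^-(0))_{j=1}^N$ there is a unique $\hm v\in GS_\varrho$ solving $\partial_t\hm v=JH_0(t)\hm v+\hm f$, with $\|\hm v\|_{GS_\varrho}\lesssim\sum_j|a_j^-(0)|+\|\hm f\|_{GS^*_{\varrho+\varpi}}$; the small loss $\varpi>0$ in the weight is exactly what absorbs the time-integration of the zero modes.

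What each approach buys: the paper's fixed-trajectory argument is shorter and avoids the two-norm (strong/weak) contraction machinery you import from Theorem~\ref{thm:manifold1}, since there are no paths to compare. Your modulated approach stays closer to the centre-stable construction and makes the geometry of the $N$-manifold more explicit --- in particular your bi-Lipschitz lower bound via the linear independence of the $\mathcal Y_j^-(T)$ is more detailed than what the paper writes (it simply says the solutions ``form a dimension $N$ manifold'' parametrised by $(a_j^-(0))_{j=1}^N$). Either way the constraint on the weight is the same: $\varrho$ must sit below both the interaction rate $\delta/2$ and the smallest stable exponent $\nu/\gamma_j$.
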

	
	\begin{proof}

		Due to the exponential convergence of modulation parameters, we can
		decompose any pure multi-soliton $\hm{\psi}(t)$ as
		\[
		\boldsymbol{\psi}(t)=\boldsymbol{Q}(\hm{\beta},\hm{\beta}t+\hm{x}_{0})+\hm{v}(t).
		\]
		We have shown in Theorem \ref{thm:exp}  that  $\hm{v}(t)\rightarrow0$ exponentially with a rate $\rho$ only depending on prescribed constants.  
		
		We write the equation of $\hm{v}$ as
		\begin{align*}
			\frac{d}{dt}\hm{v}(t) & =JH_{0}(t)\hm{v}(t)+\mathcal{I}(Q)+\mathfrak{I}_{1}(Q^{2},v)+\mathfrak{I}_{2}(Q,v^{2})+\hm{F}(v)\\
			& =:JH_{0}(t)\hm{v}(t)+\mathcal{W}(x,t)
		\end{align*}
		where $H_0(t)$ is defined as the corresponding time-depend operator with respect to the path $(\hm{\beta},\hm{\beta}t+\hm{x}_{0})$.
		Then with respect to the same path, we decompose $\hm{v}$ as
		\begin{equation}
			\hm{v}(t)=\pi_{0,0}(t)\hm{v}(t)+\pi_{0,+}(t)\hm{v}(t)+\pi_{0,-}(t)\hm{v}(t)+\pi_{0,c}(t)\hm{v}(t)
		\end{equation}
		where $\pi_{0,\ell},\,\ell=0,c,\pm$ are projections with respect to $(\hm{\beta},\hm{\beta}t+\hm{x}_{0})$.
		Using the Duhamel formula, one can write
		\begin{equation}\label{eq:vduham}
			\hm{v}(t)=-\int_{t}^{\infty}\mathcal{T}_{0}(t,s)\mathcal{W}(\cdot,s)\,ds
		\end{equation}
		where $\mathcal{T}_0(t,s)$ is the propagator with respect the path $(\hm{\beta},\hm{\beta}t+\hm{x}_{0})$, and  for discrete modes, one has	
		\begin{align}
			a_{j}^{+}(t) &= \left\langle \alpha_{\beta_{j}}^{+}(\cdot-\beta_{j}t-x_{j}),\hm{v}(t)\right\rangle =-\int_{t}^{\infty}\exp\left((t-s)\frac{\nu}{\gamma_{j}}\right)\left\langle \alpha_{\beta_{j}}^{+}(\cdot-\beta_{j}s-x_{j}),\mathcal{W}(\cdot,s)\right\rangle \,ds\label{eq:unstableclass}\\
			& -\sum_{k\neq j}\int_{t}^{\infty}\exp\left((t-s)\frac{\nu}{\gamma_{j}}\right)\left\langle \alpha_{\beta_{j}}^{+}\big(\cdot-\beta_{j}s-x_{j}\big),\mathcal{V}_{\beta_{k}}\big(\cdot-\beta_{k}s-x_{k}\big)\hm{v}(s)\right\rangle \,ds\nonumber
		\end{align}
		
		\begin{align}
			a_{j}^{-}(t) & = \left\langle \alpha_{\beta_{j}}^{-}(\cdot-\beta_{j}t-x_{j}),\hm{v}(t)\right\rangle =a_{j}^{-}(0)\exp\left(-t\frac{\nu}{\gamma_{j}}\right)\label{eq:stableclass2}\\&-\int_{0}^{t}\exp\left((s-t)\frac{\nu}{\gamma_{j}}\right)\left\langle \alpha_{\beta_{j}}^{-}(\cdot-\beta_{j}s-x_{j}),\mathcal{W}(\cdot,s)\right\rangle \,ds\nonumber\\
			& -\sum_{k\neq j}\int_{0}^{t}\exp\left((s-t)\frac{\nu}{\gamma_{j}}\right)\left\langle \alpha_{\beta_{j}}^{-}(\cdot-\beta_{j}s-x_{j}),\mathcal{V}_{\beta_{k}}\big(\cdot-\beta_{k}s-x_{k}\big)\hm{v}(s)\right\rangle \,ds\nonumber.
		\end{align}
		Our goal is to show that for any given set $\left(a_{j}^{-}(0)\right)_{j=1}^{N}$, the system
		above has a unique solution. 

		We perform the fixed point analysis using
		weighted  norms. For a small fixed $\varrho>0$, we define
		\[
		\left\Vert \hm{v}\right\Vert _{GS_{\varrho}}=\sup_{t\geq0}e^{\varrho t}\left\Vert \hm{v}(t)\right\Vert _{S_{\mathcal{H}}[t,\infty)},\,\left\Vert \hm{v}\right\Vert _{GS^{*}_{\varrho}}=\sup_{t\geq0}e^{\varrho t}\left\Vert \hm{v}(t)\right\Vert _{S^{^*}_{\mathcal{H}}[t,\infty)}.
		\]
		For a function of $t$, we define
		\[
		\left\Vert f\right\Vert _{G_\varrho L^P_t}:=\sup_{t\in\mathbb{R}^+}e^{-\varrho t}\left\Vert f(\cdot) \right\Vert_{L^p[t,\infty]}.
		\]
		We  claim that for any $\hm{f}(t)\in G\mathcal{S}_{\varrho+\varpi}^{*}$ 
		and $\left(a_{j}^{-}(0)\right)_{j=1}^{N}=\left( \left\langle \alpha_{\beta_{j}}^{-}(\cdot-x_{j}),\hm{v}(0)\right\rangle\right)_{j=1}^N$, there is a unique solution $\hm{v}$
		in $G\mathcal{S}_{\varrho}$ such that
		\begin{equation}\label{eq:claimeq}
			\frac{d}{dt}\hm{v}(t)=JH_{0}(t)\hm{v}(t)+\hm{f}(t)
		\end{equation}
		and it satisfies
		\begin{equation}\label{eq:claim}
			\left\Vert \hm{v}\right\Vert _{GS_{\varrho}}\lesssim\sum_{j=1}^{N}\left|a_{j}^{-}(0)\right|+\left\Vert \hm{f}\right\Vert _{GS_{\varrho+\varpi}^{*}}
		\end{equation}
		for any $\varpi>0$. 
		
		We will show the claim above by two steps: we first perform \emph{a priori} estimates, and then to show the existence and uniqueness of the solution.
		
		\noindent { STEP 1: {\it a priori} estimates. }
		Applying Stricharz estimates from Theorem \eqref{thm:mainthmlinear} to \eqref{eq:vduham} for the interval $[t,\infty)$, one has
		\begin{align}
			\left\Vert \hm{v}(\cdot)\right\Vert _{S_{\mathcal{H}}[t,\infty)}&\lesssim\left\Vert \hm{f}(\cdot)\right\Vert _{\mathcal{S}_{\mathcal{H}}^{*}[t,\infty)}+\left\Vert \pi_{0,+}(\cdot)\hm{v}(\cdot)\right\Vert _{S^{*}_{\mathcal{H}}[t,\infty)}+\left\Vert \pi_{0,-}(\cdot)\hm{v}(\cdot)\right\Vert _{S^{*}_{\mathcal{H}}[t,\infty)}\\
			&+\left\Vert\pi_{0,0}(\cdot)\hm{v}(\cdot)\right\Vert _{S^{*}_{\mathcal{H}}[t,\infty)}.\nonumber
		\end{align}
		Multiplying both sides by $e^{\varrho t}$ and then taking the supremum on both sides, we conclude that
		\begin{align}
			\left\Vert \hm{v}(\cdot)\right\Vert _{GS_{\varrho}}&\lesssim\left\Vert \hm{f}(\cdot)\right\Vert _{G\mathcal{S}_{\rho}^{*}}+\left\Vert \pi_{0,+}\hm{v}(\cdot)\right\Vert _{GS^*_{\varrho}}+\left\Vert \pi_{0,-}(\cdot)\hm{v}(\cdot)\right\Vert _{GS^*_{\varrho}}+\left\Vert\pi_{0,0}(\cdot)\hm{v}(\cdot)\right\Vert_{GS^*_{\varrho}}.
		\end{align}
		Now we estimate last three terms above separately.
		
		By a similar argument above, using the ODE for the unstable modes,\eqref{eq:unstableclass}, we have
		\begin{align}
			\left\Vert \pi_{0,+}(\cdot)\hm{v}(\cdot)\right\Vert _{GS^{*}_{\varrho}}\sim\sum_{j=1}^{N}\left\Vert a_{j}^{+}(\cdot)\right\Vert _{G_{\varrho}L_{t}^{\infty}\bigcap G_{\varrho}L_{t}^{1}}\lesssim\frac{1}{\delta} e^{-(\frac{L}{2})}\left\Vert \hm{v}\right\Vert _{G\mathcal{S}_{\varrho}}+\left\Vert \hm{f}(\cdot)\right\Vert _{G\mathcal{S}_{\varrho}^{*}}
		\end{align}
		For the stable modes, we pick $\varrho>0$ small enough, say $0<\varrho<\nu$,
		then $\exp\left((s-t)\frac{\nu}{\gamma_{j}}\right)e^{(t-s)\varrho}$
		is still of exponential decay in terms of $t-s$. Then from \eqref{eq:stableclass2}, one has
		{\small	\[
			\left\Vert \pi_{0,-}\hm{v}(\cdot)\right\Vert _{GS_{\varrho}}\sim\sum_{j=1}^{N}\left\Vert a_{j}^{-}(\cdot)\right\Vert _{G_{\varrho}L_{t}^{\infty}\bigcap G_{\varrho}L_{t}^{1}}\lesssim\sum_{j=1}^{N}\left|a_{j}^{-}(0)\right|+\frac{1}{\delta} e^{-(\frac{L}{2})}\left\Vert \hm{v}\right\Vert _{G\mathcal{S}_{\varrho}}+\left\Vert \hm{f}(\cdot)\right\Vert _{G\mathcal{S}_{\varrho}^{*}}.
			\]}
		Finally, for zero modes, we first compute
		{\small	\begin{align*}
				\frac{d}{dt}\left\langle \alpha_{m,\beta_{j}}^{0}(\cdot-\beta_{j}t-x_{0,j}),\hm{v}(t)\right\rangle  & =\left\langle \frac{d}{dt}\alpha_{m,\beta_{j}}^{0}(\cdot-\beta_{j}t-x_{0,j}),\hm{v}(t)\right\rangle +\left\langle \alpha_{m,\beta_{j}}^{0}(\cdot-\beta_{j}t-x_{0,j}),\frac{d}{dt}\hm{v}(t)\right\rangle \\
				& =\left\langle \frac{d}{dt}\alpha_{m,\beta_{j}}^{0}(\cdot-\beta_{j}t-x_{0,j}),\hm{v}(t)\right\rangle +\left\langle \alpha_{m,\beta_{j}}^{0}(\cdot-\beta_{j}t-x_{0,j}),JH_{0}(t)\hm{v}(t)\right\rangle \\
				& +\left\langle \alpha_{m,\beta_{j}}^{0}(\cdot-\beta_{j}t-x_{0,j}),\hm{f}(t)\right\rangle \\
				& =\sum_{j\neq k}\left\langle \alpha_{m,\beta_{j}}^{0}(\cdot-\beta_{j}t-x_{0,j}),\mathcal{V}_{\beta_{k}}(\cdot-\beta_{k}t-x_{0,k})\hm{v}(t)\right\rangle \\
				& +\left\langle \alpha_{m,\beta_{j}}^{0}(\cdot-\beta_{j}t-x_{0,j}),\hm{f}(t)\right\rangle .
		\end{align*}}
		Integrating from $\infty$, using the exponential decay of potentials
		and separation conditions of paths, one has
		\[
		\left|\left\langle \alpha_{m,\beta_{j}}^{0}(\cdot-\beta_{j}t-x_{0,j}),\hm{v}(t)\right\rangle \right|\lesssim\frac{1}{\varrho+\varepsilon}e^{-(\text{\ensuremath{\varrho}}+\varepsilon)t}\left\Vert \hm{v}\right\Vert _{\mathcal{S}_{\varrho}}+\frac{1}{\varrho+\varpi}e^{-(\varrho+\varpi)t}\left\Vert \hm{f}\right\Vert _{G\mathcal{S}_{\varrho+\varpi}^{*}}
		\]
		where $\varepsilon=\frac{L}{2}$ is due to the separation of paths. Then we
		conclude that
		\begin{align*}
			\left|\left\langle \alpha_{m,\beta_{j}}^{0}(\cdot-\beta_{j}t-x_{0,j}),\hm{v}(t)\right\rangle \right|_{G_{\varrho}L_{t}^{\infty}\bigcap G_{\varrho}L_{t}^{1}} & \lesssim\epsilon\left(\frac{1}{\varrho+\varepsilon}+\frac{1}{(\varrho+\varepsilon)^{2}}\right)\left\Vert \hm{v}\right\Vert _{\mathcal{S}_{\varrho}}\\
			& +\left(\frac{1}{\varrho+\varpi}+\frac{1}{(\varrho+\varpi)^{2}}\right)\left\Vert \hm{f}\right\Vert _{G\mathcal{S}_{\varrho+\varpi}^{*}}.
		\end{align*}
		We can also compute that
		{\small\begin{align*}
				\frac{d}{dt}\left\langle \alpha_{m,\beta_{j}}^{1}(\cdot-\beta_{j}t-x_{0,j}),\hm{v}(t)\right\rangle  & =\left\langle \frac{d}{dt}\alpha_{m,\beta_{j}}^{1}(\cdot-\beta_{j}t-x_{0,j}),\hm{v}(t)\right\rangle +\left\langle \alpha_{m,\beta_{j}}^{1}(\cdot-\beta_{j}t-x_{0,j}),\frac{d}{dt}\hm{v}(t)\right\rangle \\
				& =\left\langle \frac{d}{dt}\alpha_{m,\beta_{j}}^{1}(\cdot-\beta_{j}t-x_{0,j}),\hm{v}(t)\right\rangle +\left\langle \alpha_{m,\beta_{j}}^{1}(\cdot-\beta_{j}t-x_{0,j}),JH_{0}(t)\hm{v}(t)\right\rangle \\
				& +\left\langle \alpha_{m,\beta_{j}}^{1}(\cdot-\beta_{j}t-x_{0,j}),\hm{f}(t)\right\rangle \\
				& =\sum_{j\neq k}\left\langle \alpha_{m,\beta_{j}}^{1}(\cdot-\beta_{j}t-x_{0,j}),\mathcal{V}_{\beta_{k}}(\cdot-\beta_{k}t-x_{0,k})\hm{v}(t)\right\rangle \\
				& -\frac{1}{\gamma_{j}}\left\langle \alpha_{m,\beta_{j}}^{0}(\cdot-\beta_{j}t-x_{0,j}),\hm{v}(t)\right\rangle \\
				& +\left\langle \alpha_{m,\beta_{j}}^{0}(\cdot-\beta_{j}t-x_{0,j}),\hm{f}(t)\right\rangle .
		\end{align*}}
		From the identical computations above, we get
		\begin{align*}
			\left|\left\langle \alpha_{m,\beta_{j}}^{1}(\cdot-\beta_{j}t-x_{0,j}),\hm{v}(t)\right\rangle \right|_{G_{\varrho}L_{t}^{\infty}\bigcap G_{\varrho}L_{t}^{1}} & \lesssim\frac{1}{\delta} e^{-(\frac{L}{2})}\frac{1}{\varrho+\varepsilon}\left(\frac{1}{\varrho+\varepsilon}+\frac{1}{(\varrho+\varepsilon)^{2}}\right)\left\Vert \hm{v}\right\Vert _{\mathcal{S}_{\varrho}}\\
			& +\frac{1}{\varrho+\varpi}\left(\frac{1}{\varrho+\varpi}+\frac{1}{(\varrho+\varpi)^{2}}\right)\left\Vert \hm{f}\right\Vert _{G\mathcal{S}_{\varrho+\varpi}^{*}}.
		\end{align*}
		Taking $L$ large enough which is ensured by the separation
		condition and make the starting time $T_{0}$ large enough, putting computations above together,  we conclude
		that
		\[
		\left\Vert \hm{v}\right\Vert _{GS_{\varrho}}\lesssim\sum_{j=1}^{N}\left|a_{j}^{-}(0)\right|+\left\Vert \hm{f}\right\Vert _{GS_{\varrho+\varpi}^{*}}.
		\]
		
		\noindent{STEP 2: {\it existence and uniqueness of the solution}.}  
		Fix a set of data  $\left(a_{j}^{-}(0)\right)_{j=1}^{N}$ and $\hm{f}(t)\in G\mathcal{S}_{\varrho+\varpi}^{*}$. Take $T\geq T_0$ where $T_0$ is a large time, and let $\hm{f}_T (t)=\chi (t-T)\hm{f}(t)$ where $\chi(t)$ is a smooth decreasing function such that $\chi=1$ for $t\leq-1$ and $\chi(t)=0$ for $t\geq0$. Then clearly, $\lim_{T\rightarrow\infty}\left\Vert \hm{f}_T- \hm{f}\right\Vert _{GS_{\varrho+\varpi}^{*}}=0$. Let $\hm{v}_T$ be the solution to	\[
		\frac{d}{dt}\hm{v}_T(t)=JH_{0}(t)\hm{v}_T(t)+\hm{f}_T(t)
		\]
		satisfying	$\left(a_{j}^{-}(0)\right)_{j=1}^{N}=\left( \left\langle \alpha_{\beta_{j}}^{-}(\cdot-x_{j}),\hm{v}_T(0)\right\rangle\right)_{j=1}^N$ and $\hm{v}_T(T)-\pi_{0,s}(T)\hm{v}_T(T)=0$.  This solution exists by using the standard linear theory. From the {\it a priori} estimates in STEP 1, one has $	\left\Vert \hm{v}_T\right\Vert _{GS_{\varrho}}<\infty$ and $ \big(\hm{v}_T\big)_T$ satisfies the Cauchy condition  in $GS_{\varrho}$ as $T\rightarrow\infty$.  Denote the limit of the sequence as $\hm{v}\in GS_{\varrho}$ and then we  can conclude that it solves \eqref{eq:claimeq} in the sense of distribution. 
		The uniqueness again follows from the {\it a priori} estimates.

		After STEP 1 and STEP 2, 		the claim is proved.

		This claim allows us to get the {\it a priori} estimates. Setting $\hm{f}=\mathcal{I}(Q)+\mathfrak{I}_{1}(Q^{2},v)+\mathfrak{I}_{2}(Q,v^{2})+\hm{F}(v)$,
		we get
		\begin{align*}
			\left\Vert \hm{v}\right\Vert _{GS_{\varrho}} & \lesssim\sum_{j=1}^{N}\left|a_{j}^{-}(0)\right|+\left\Vert \hm{f}\right\Vert _{GS_{\varrho+\varpi}^{*}}\lesssim \frac{1}{\delta} e^{-(\frac{L}{2})}+\sum_{j=1}^{N}\left|a_{j}^{-}(0)\right|\\
			+ & \frac{1}{\delta} e^{-(\frac{L}{2})}\left\Vert \hm{v}\right\Vert _{GS_{\varrho}}+\left\Vert \hm{v}\right\Vert _{GS_{\varrho}}^{2}+\left\Vert \hm{v}\right\Vert _{GS_{\varrho}}^{3}.
		\end{align*}
		If $\left\Vert \hm{v}\right\Vert _{GS_{\varrho}}$ is small enough,
		then we get
		\[
		\left\Vert \hm{v}\right\Vert _{GS_{\varrho}}\lesssim \frac{1}{\delta} e^{-(\frac{L}{2})}+\sum_{j=1}^{N}\left|a_{j}^{-}(0)\right|.
		\]
		Note that $\frac{1}{\delta} e^{-(\frac{L}{2})}$ is independent of $\hm{v}$. This only depends on the prescribed linear trajectories.
		
		The fixed point argument here is the same as in Chen-Jendrej \cite{CJ3}. 
		Taking two solutions $\hm{v}_i$,  $i=1,2$ corresponding to $\left(a_{i,j}^{-}(0)\right)_{j=1}^{N}$.
		By the same argument as above, we have
		\begin{align*}
			\left\Vert \hm{v}_1-\hm{v}_2\right\Vert _{GS_{\varrho}} &\lesssim \sum_{j=1}^{N}\left|a_{1,j}^{-}(0)-a_{2,j}^{-}(0)\right|+  \frac{1}{\delta} e^{-(\frac{L}{2})}\left\Vert \hm{v}_1-\hm{v}_2\right\Vert _{GS_{\varrho}}\\
			&+\big(\left\Vert \hm{v}_1-\hm{v}_2\right\Vert _{GS_{\varrho}}\big)\big(\left\Vert \hm{v}_1\right\Vert _{GS_{\varrho}}+\left\Vert \hm{v}_2\right\Vert _{GS_{\varrho}}\big)+\big(\left\Vert \hm{v}_1-\hm{v}_2\right\Vert _{GS_{\varrho}}\big)\big(\left\Vert \hm{v}_1\right\Vert _{GS_{\varrho}}+\left\Vert \hm{v}_2\right\Vert _{GS_{\varrho}}\big)^2.
		\end{align*}
		Therefore, if $\left\Vert \hm{v}_i\right\Vert _{GS_{\varrho}}$ are small, we obtain a contraction map. 		
		So we get conclude that for any given $\left(a_{j}^{-}(0)\right)_{j=1}^{N}$, there is
		a unique solution $\hm{v}$ in $G\mathcal{S}_{\varrho}$. So all solutions
		satisfying \eqref{eq:asyMuti} form a $N$ dimension manifold and they are parameterized by  $\left(a_{j}^{-}(0)\right)_{j=1}^{N}.$

	\end{proof}
	\begin{rem}\label{rem:cubic}
		For the cubic nonlinearity, due to the Sobolev embedding from $H^1(\mathbb{R}^3)$ into $L^6(\mathbb{R}^3)$, in the first approach above, one can actually use energy estimates instead of Strichartz estimates.  We sketch a proof here.
		
		For $\varepsilon>$ small, we set 
		\[
		\chi_{j}(t,x):=\chi\left(\varepsilon(x-\beta_{j}t-x_{0,j})\right)
		\]
		and define
		\[
		Q(t;\hm{u}_{0},\hm{u}_{0}):=\frac{1}{2}\int_{\mathbb{R}^{3}}\left((\dot{u}_{0})^{2}+2\sum_{j=1}^{N}\chi_{j}(t)\dot{u}_{0}\left(\beta_{j}(t)\cdot\nabla u_{0}\right)+|\nabla u_{0}|^{2}+(1+V(t))u_{0}^{2}\right)\,dx.
		\]
		Then we have the following coercivity estimate:
		\begin{align*}
			Q(t;\hm{u}_{0},\hm{u}_{0}) & \geq c\left\Vert \hm{u}_{0}\right\Vert _{\mathcal{H}}-\frac{1}{c}\sum_{j=1}^{N}\left(\left\langle \alpha_{\beta_{j}}^{+}(\cdot-\beta_{j}t-x_{0,j},\hm{u}_{0}\right\rangle ^{2}+\left\langle \alpha_{\beta_{j}(t)}^{-}(\cdot-\beta_{j}t-x_{0,j}),\hm{u}_{0}\right\rangle ^{2}\right)\\
			& -\frac{1}{c}\sum_{j=1}^{N}\sum_{m=1}^{3}\left(\left\langle \alpha_{m,\beta_{j}(t)}^{0}(\cdot-\beta_{j}t-x_{0,j},\hm{u}_{0}\right\rangle ^{2}+\left\langle \alpha_{m,\beta_{j}(t)}^{1}(\cdot-\beta_{j}t-x_{0,j},\hm{u}_{0}\right\rangle ^{2}\right).
		\end{align*}
		See Chen-Jendrej \cite{CJ} for the proof in more general settings.
		
		Then for the solution to \eqref{eq:claimeq}, one can show that 
		\begin{equation}
			\big| \frac{d}{dt} Q(t:\hm{v}(t),\hm{v}(t)) \big|\lesssim c_0 \left\Vert \hm{v}(t)\right\Vert _{\mathcal{H}}^2+ \left\Vert \hm{v}(t)\right\Vert _{\mathcal{H}}\left\Vert \hm{f}(t)\right\Vert _{\mathcal{H}}
		\end{equation}
		for $c_0$ small. Integrating from $\infty$, one has
		\begin{equation}
			\big| Q(t:\hm{v}(t),\hm{v}(t)) \big|\lesssim c_0 \frac{e^{-2\varrho t}}{2\varrho}\left\Vert \hm{v}(t)\right\Vert _{\mathcal{H}_\varrho}^2+\frac{e^{-2\varrho t}}{2\varrho} \left\Vert \hm{v}(t)\right\Vert _{\mathcal{H}_\varrho}\left\Vert \hm{f}(t)\right\Vert _{\mathcal{H}_\varrho}
		\end{equation}
		where $\mathcal{H}_\varphi$ is the corresponding weighted energy norm. 
		
		The computations and estimates for $a_j^{\pm}(t)$ are the same in the computations in the proof above. Then differentiating in time for projections onto the generalized kernel, the same results from the proof above hold.
		
		Using the coercivity, one can conclude that
		\[
		\left\Vert \hm{v}\right\Vert _{\mathcal{H}_{\varrho}}\lesssim\sum_{j=1}^{N}\left|a_{j}^{-}(0)\right|+\left\Vert \hm{f}\right\Vert _{\mathcal{H}_{\varrho+\varpi}}.
		\]
		Setting  $\hm{f}=\mathcal{I}(Q)+\mathfrak{I}_{1}(Q^{2},v)+\mathfrak{I}_{2}(Q,v^{2})+\hm{F}(v)$, we note that
		\begin{equation}
			\left\Vert \mathfrak{I}_{2}(Q,v^{2})+\hm{F}(v)\right\Vert _{\mathcal{H}}\lesssim \left\Vert v\right\Vert _{L^6}^2 + \left\Vert v\right\Vert _{L^6}^3\lesssim \left\Vert \hm{v}\right\Vert _{\mathcal{H}}^2 +\left\Vert \hm{v}\right\Vert _{\mathcal{H}}^3
		\end{equation} by Sobolev's embedding. Then the remaining steps are the same as those in the first approach above. 
		
		For full details in the 1-d setting, see Chen-Jendrej \cite{CJ3}. In particular, following the same analysis in \cite{CJ3}, using spacially localized energy norms, one can also  conclude that $\hm{v}(t)$ is exponentially localized.

		Strichartz estimates will allow us to work for other  mass supercritical and energy subcritical nonlinearities provided that the spectral conditions hold.
	\end{rem}

	\bigskip
\end{document}